\definecolor{refkey}{gray}{.75}
\definecolor{labelkey}{gray}{.75}
\newcommand{\C}{\mathbb C}
\newcommand{\R}{\mathbb R}
\newcommand{\Z}{\mathbb Z}
\newcommand{\N}{\mathbb N}
\newcommand{\Prob}{\mathbb P}
\newcommand{\E}{\mathbb E}
\newcommand{\diff}{\mathrm{d}}
\newcommand{\eps}{\varepsilon}
\newcommand{\cF}{\mathcal{F}}
\newcommand{\cH}{\mathcal{H}}
\newcommand{\pr}{\mathbb P}
\newcommand{\T}{\mathbb T}
\newcommand{\lra}{\longrightarrow}
\newcommand{\ident}{{\mathchoice {\rm 1\mskip-4mu l} {\rm 1\mskip-4mu l}
{\rm 1\mskip-4.5mu l} {\rm 1\mskip-5mu l}}}
\newcommand{\undertilde}[1]{\underset{\widetilde{}}{#1}}
\renewcommand{\thesubfigure}{\arabic{subfigure}}
\renewcommand{\@thesubfigure}{\tiny Figure \thesubfigure: \space}
\renewcommand{\p@subfigure}{}
\newtheorem{teo}{Theorem}[section]
\newtheorem{lem}[teo]{Lemma}
\newtheorem{cor}[teo]{Corollary}
\newtheorem{rem}[teo]{Remark}
\newtheorem{pro}[teo]{Proposition}
\newtheorem{defn}[teo]{Definition}
\newtheorem{exmp}[teo]{Example}
\newtheorem{assump}[teo]{Assumption}
\title
{\textbf{Recent results on branching random walks}}
\author
{Daniela Bertacchi\\
Dipartimento di Matematica e Applicazioni\\
Universit\`a di Milano--Bicocca\\
via Cozzi 53, 20125 Milano, Italy\\
daniela.bertacchi\@@unimib.it
\and
Fabio Zucca \\
Dipartimento di Matematica \\
Politecnico di Milano\\
piazza Leonardo da Vinci 32, 20133 Milano, Italy\\
fabio.zucca\@@polimi.it}
\date{}
\begin{document}

\maketitle

%

\begin{abstract}
This paper is a collection of recent results on discrete-time and
continuous-time branching random walks.
Some results are new and others are known. Many aspects of this
theory are considered: local, global and strong local survival,
the existence of a pure global survival phase and the approximation of
branching random walks by means of multitype contact processes
or spatially confined branching random walks.
Most results are obtained using a generating function approach: the probabilities
of extinction are seen as fixed points of an infinite dimensional power
series. Throughout this paper we provide many nontrivial examples and
counterexamples.
\end{abstract}

\noindent {\bf Keywords}: branching random walk, survival, phase transition, amenability, critical
value, critical behavior, percolation, multitype contact process.

\noindent {\bf AMS subject classification}: 60J80, 60K35.

\tableofcontents

\section{Introduction}
\label{sec:intro}

Branching random walks can be considered as processes which simultaneously generalize
the concepts of branching process and of random walk.
A branching process is a very simple population model (introduced in \cite{cf:GW1875})
where particles breed and die (independently of each other) according to some random law.
At any time, this process is completely characterized by the total number of particles alive.
Branching random walks (in short, BRWs) add space to this picture: particles live in a spatially structured environment
and the reproduction law, which may depend on the location, not only tells how many children the particle
has, but also where it places them. The state of the process, at any time, is thus described by the
collection of the numbers of particles alive at $x$, where $x$ varies among the possible sites.
Although particles do not actually move, an observer would see a random movement of the population.
Moreover if we identify every particle with one of its children (if there are any), then we may view
the BRW as a system of random walkers which may disappear (i.e.~the corresponding
particle has no children) or split into two or more independent walkers (i.e.~the corresponding
particle has two or more children).

The basic question that one answers studying the branching process is whether it survives (i.e.~with
positive probability at any time there is someone alive); while the classical question for random walks
is whether the walker returns (with positive probability or, equivalently, with probability one)
infinitely many times to some fixed site. Transposed into BRW theory, the first question
asks whether there is global survival, that is, with
positive probability at any time there is someone alive \textit{somewhere}); while the second question
deals with local survival, that is, with
positive probability the process returns infinitely many times to some fixed site
(this event, in contrast with the situation for random walks, may have probability one only in trivial
examples).

In the literature one can find BRWs both in continuous and discrete time.
The continuous-time setting has been studied by many authors
(see \cite{cf:HuLalley, cf:Ligg1, cf:Ligg2, cf:MadrasSchi, cf:MountSchi} just to name a few).
As we see in Section~\ref{subsec:continuous},
in this case one studies a family of BRWs which depends on the choice
of a parameter $\lambda$.
There are two (possibly coinciding) parameters of interest:
 $\lambda_w\le\lambda_s$. If $\lambda<\lambda_w$ there is almost sure extinction,
if $\lambda_w<\lambda\le \lambda_s$ there is global but not local survival and
if $\lambda_s<\lambda$ there is local and global survival
 (see for instance \cite{cf:PemStac1, cf:Stacey03, cf:BZ, cf:BZ2}).
The discrete-time case has been initially considered as a natural
generalization of branching processes
(see \cite{cf:AthNey, cf:Big1977, cf:Big1978, cf:BigKypr97, cf:BigRah05, cf:Harris63}),
but, since every continuous-time BRW admits a discrete-time counterpart which has the same
behavior, results in this setting naturally extend to continuous time.

In recent years, there has been a growing interest about BRWs in random environment
(see for instance \cite{cf:CMP98, cf:GMPV09, cf:DHMP99, cf:MP00, cf:MP03, cf:M08}).
This is an interesting subject that we do not discuss in this chapter.

Being at the crossroad between branching process and random walk theories, BRW theory
benefits of the techniques of both fields (to be honest, there is a third road at this intersection,
since BRWs can also be seen as interacting particle systems --  although particles do not interact).
Indeed, recalling that the probability of extinction of a branching process is the fixed point of
a generating function associated to the offspring distribution, one can associate to the offspring
distribution of the BRW a (possibly infinite-dimensional) generating function $G$ (this is what we do in
Section~\ref{subsec:genfun}). Moreover it is possible to prove that the extinction probabilities
of the BRW are fixed points of this generating function. This is a fundamental tool that we use in Section~\ref{sec:survival}.
On the other hand some tools borrowed from random walk theory, such as generating functions
of first return probabilities and superharmonic functions, are particularly useful in the no death case
(that is, the case where every particle has at least one child).

The chapter is a collection of recent results on BRWs: some of them are already known,
 some are new and their
proofs can be found in Section~\ref{sec:proofs}. A brief outline of the chapter
 and of main results follows.
The chapter is  divided into five main sections.
Section~\ref{sec:basic} is a short technical
introduction to the subject. There is a description of discrete-time and continuous-time BRWs (Sections~\ref{subsec:discrete}
and \ref{subsec:continuous} respectively). Classical processes as edge-breeding and site-breeding continuous-time BRWs
are discussed and it is shown that, from the point of view of survival vs.~extinction, the class of discrete-time
BRWs extends the class of continuous-time BRWs.
In Section~\ref{subsec:otherdynamics}, other models are presented along
with their relation with BRWs. In Sections~\ref{subsec:trails} and \ref{subsec:genfun} two important tools are discussed:
trails and generating functions. While the first one is more important from a theoretical point of view, the second one
is repeatedly used to study the behavior of a BRW. In particular a \textit{maximum principle} for solutions of
certain inequalities involving these generating functions is proved.

Section~\ref{sec:specialprocesses} presents two particular families of BRWs: $\cF$-BRWs (Section~\ref{subsec:FBRWs})
and BRWs with no death (Section~\ref{subsec:nodeath}).
The first class (which has been introduced in
\cite{cf:Z1}), is a natural generalization of the classes of continuous-time BRWs on weighted $\cF$-graphs (see
\cite[Proposition 4.5]{cf:BZ2}) and on $\cF$-multigraphs (see \cite[Definition 3.1]{cf:BZ}). This class contains
\textit{quasi-transitive BRWs} (for instance, edge-breeding continuous-time BRWs on quasi-transitive graphs) and
BRWs which are \textit{locally isomorphic to branching processes} (for instance,
site-breeding continuous-time BRWs on regular graphs); nevertheless, as
 Example~\ref{rem:pureFBRW} shows, the class of $\mathcal F$-BRWs is strictly larger than the class of quasi-transitive
BRWs even for edge-breeding continuous-time BRWs (see also Example~\ref{exmp:pureglobalsurvival}).
As for BRWs with no death,
which are a natural generalization of random walks, we note that
 even though they represent a limited subclass
of BRWs, many results can be extended immediately to the general class of BRWs using a comparison introduced
by Harris for branching processes (see \cite[Chapter I.12]{cf:AthNey} and Section~\ref{subsec:nodeath}).

Section~\ref{sec:survival} is devoted to the study of the behavior of BRWs (survival vs.~extinction).
In Section~\ref{subsec:survivalprob} the probabilities of survival are
viewed as fixed
points of an infinite-dimensional generating function.
Local survival is completely characterized through the knowledge of
the first-moment matrix $M$ (Theorem~\ref{th:equiv1local}).
 For global survival we give
an equivalent condition in terms of the generating function $G$ (Theorem~\ref{th:equiv1global}(1)).
In terms of $M$ we can only provide an equivalent condition for $\cF$-BRWs and a necessary
one in the general case (Theorem~\ref{th:equiv1global3}).
Example~\ref{exmp:0} shows that many conjectures
about sufficient conditions for global survival are false.
In continuous time,
Corollary~\ref{cor:pemantleimproved} identifies $\lambda_s$ and states almost sure
local extinction at $\lambda=\lambda_s$; a characterization of $\lambda_w$ through
the existence of solutions of certain inequalities is given in Theorem~\ref{th:equiv1global2}.
In the case of $\cF$-BRWs, Corollary~\ref{cor:globalsurvivalcontinuous} provides a more
explicit expression for $\lambda_w$  (this expression is a lower bound in the general case), and
states global extinction at $\lambda=\lambda_w$.
Example~\ref{exm:4} shows that in the general case global survival is possible at $\lambda_w$.
Clearly local survival implies global survival and the converse is false.
When the two events coincide (and have positive probability) we say that there is
strong local survival. Proposition~\ref{pro:qtransitive} claims that quasi-transitivity
and local survival imply strong local survival; Theorem~\ref{th:MenshikovVolkov}
characterizes strong local survival, generalizing \cite[Theorem 3.1]{cf:MenshikovVolkov}
which was stated for the no death case.
Examples~\ref{ex:nonstronglocalFBRW} and \ref{ex:nonstronglocalFBRW2} show that even when the
BRW is locally isomorphic
to a branching process (i.e.~the reproduction law does not depend on the site)
non-strong local survival is possible. Moreover
Example~\ref{rem:nonstrongandstrong} is an edge-breeding continuous-time BRW on a homogeneous
tree with a loop where, for small and large values of
$\lambda$ there is strong local survival while for intermediate values we have non-strong local
survival.
This shows that, unlike local and global survival, for strong local survival there is no
monotonicity in $\lambda$.
Irreducibility guarantees that if there is local survival at some $y$ (or global survival)
starting from some $x_0$, then there is local survival at any $w$ (or global survival) starting from any $x$.
Clearly the probabilities of global or local survival may depend on the staring point even in the irreducible case
(see Example~\ref{exm:BP1-2}).
Example~\ref{exm:reducible} shows that in the reducible case there can be local extinction at $x$
starting from $x$ for
all $x \in X$, but local survival at some $y$ starting from some $x \not = y$;
in addition, this example shows that in the reducible case it there might be global extinction
starting from $x$ but global survival starting from some $y\neq x$.
Example~\ref{exm:irreduciblenotstrongeverywhere} proves that, even in the irreducible case,
if there are vertices where particles have at least one child almost surely, then
it might happen that there is strong local survival starting from some vertex and
non-strong local survival starting from other vertices.
In Example~\ref{exm:4.5} we find a BRW which survives globally even if the
law at each site gives a branching process which dies out.
The main tool that we use in many examples is the discussion in
Remark~\ref{rem:strongconditioned} which relates the probability of visiting a subset
$A \subseteq X$, the probability of local survival at $A$ and the probability of global survival.
Section~\ref{subsec:pureweak} is devoted to pure global survival, that is when
the process survives globally but not locally.
For $\cF$-BRWs the existence of a pure global survival phase
is equivalent to nonamenability (Corollary~\ref{cor:nonam}).
Theorem~\ref{th:nonam} gives an equivalent condition on the first moment matrix $M$,
in the general case, for nonamenability. Unfortunately, the existence of a pure
global survival phase is not equivalent, in general, to nonamenability:
Example~\ref{exm:amenable} shows that there exists an amenable edge-breeding, continuous-time BRW
without pure global survival and, conversely, according to
Example~\ref{exm:nonamenable}, there exists
a nonamenable edge-breeding, continuous-time BRW with pure global survival.
Theorem~\ref{th:subexponentialgrowth} gives a sufficient condition for no pure global
survival of a continuous-time BRW. In Section~\ref{subsec:locisomBP} we treat the special case
where the reproduction law is independent of the site.

In Section~\ref{sec:approx} the question of the approximation of a BRW is studied.
In particular in Section~\ref{subsec:spatial} we obtain an approximation
of a general BRW, which is not necessarily irreducible,
by means of a sequence of spatially confined BRWs (Theorem~\ref{th:spatial}).
This results is a corollary of a generalization of a theorem due to Sarymshakov and Seneta
(see \cite[Theorem 6.8]{cf:Sen}) which deals with nonnegative matrices
 (Theorem~\ref{th:genseneta}).
Here we obtain, as a particular case, that
if we have a surviving process, then by confining it
to a sufficiently large (possibly finite and not necessarily connected) proper subgraph
the resulting BRW survives as well;
this result was already known for irreducible BRWs confined to connected subgraphs.
In Section~\ref{subsec:truncated} we study the
approximation of the BRW with a sequence
of truncated BRWs (which are, in fact, multitype contact processes).
The key to obtain such a result
is the comparison of our process with a suitable oriented percolation
(as explained in \cite{cf:BZ3, cf:Z1}).
The strategy is then applied to some classes of regular BRWs in discrete-time and continuous-time.

In Section~\ref{sec:proofs} all the proofs of new results can be found, along with some technical lemmas.

\section{Basic definitions and preliminaries}
\label{sec:basic}

\subsection{Discrete-time Branching Random Walks}
\label{subsec:discrete}

We start with the construction of a generic discrete-time BRW $\{\eta_n\}_{n \in \N}$
(see also \cite{cf:BZ2} where it is
called \textit{infinite-type branching process}) on a set $X$ which is
at most countable; $\eta_n(x)$ represents the number of particles alive
at $x$ at time $n$. To this aim we
consider a family $\mu=\{\mu_x\}_{x \in X}$
of probability measures 
on the (countable) measurable space $(S_X,2^{S_X})$
where $S_X:=\{f:X \to \N:\sum_yf(y)<\infty\}$.
To obtain generation $n+1$ from generation $n$ we proceed as follows:
a particle at site $x\in X$ lives one unit of time,
then a function $f \in S_X$ is chosen at random according to the law $\mu_x$
and the original particle is replaced by $f(y)$ particles at
$y$, for all $y \in X$; this is done independently for all particles of
generation $n$.
Note that the choice of $f$ assigns simultaneously the total number of children
and the location where they will live.
We denote the BRW by the couple $(X,\mu)$.

Equivalently we could introduce the BRW by choosing the number of children and then their location.
Indeed define $\cH:S_X \rightarrow \N$ as $\cH(f):=\sum_{y \in X} f(y)$
which represents the total number of children associated to $f$.
Denote by $\rho_x$ the measure on $\N$ defined by
$\rho_x(\cdot):=\mu_x(\cH^{-1}(\cdot))$; this is the law of the random number of children
of a particle living at $x$. For each particle, independently, we pick a number $n$ at random,
according to the law $\rho_x$, and then we choose a function $f \in \cH^{-1}(n)$ with probability
$\mu_x(f)/\rho_x(n)\equiv\mu_x(f)/\sum_{g \in \cH^{-1}(n)}\mu_x(g)$ and
we replace the particle at $x$ with $f(y)$ particles at $y$ (for all $y \in X$).

To be precise, to construct the process,
pick a family $\{ f_{i,n,x}\}_{i,n \in \N , x \in X}$
of independent $S_X$-valued random variables such that, for every $x \in X$, $\{f_{i,n,x}\}_{i,n \in \N}$
have the common law $\mu_x$, and an initial state $\eta_0$ such that $\sum_{x \in X} \eta_0(x) <+\infty$.
The discrete-time BRW $\{\eta_n\}_{n \in \N}$ is defined
iteratively as follows
\begin{equation}\label{eq:evolBRW}
\eta_{n+1}(x)=\sum_{y \in X} \sum_{i=1}^{\eta_n(y)} f_{i,n,y}(x)=
\sum_{y \in X} \sum_{j=0}^\infty \ident_{\{\eta_n(y)=j\}} \sum_{i=1}^{j} f_{i,n,y}(x).
\end{equation}
Even though in this chapter the initial state  will always be deterministic,
considering a random initial distribution $\sum_{x \in X} \eta_0(x) <+\infty$ a.s., would not
be a significant generalization.


While in random walk theory a fundamental role is played by the transition matrix,
in BRW theory a similar role is played by the \textit{first-moment matrix}
$M=(m_{xy})_{x,y \in X}$,
where
$m_{xy}:=\sum_{f\in S_X} f(y)\mu_x(f)$ is
 the expected number of particles from $x$ to $y$
(that is, the expected number of children that a particle living
at $x$ sends to $y$).
 We
suppose that $\sup_{x \in X} \sum_{y \in X} m_{xy}<+\infty$; most of the results
of this chapter still hold without this hypothesis, nevertheless
it allows us to avoid dealing with an infinite expected number of offsprings.
Note that the expected number of children generated by a particle living at $x$
is $\sum_{y \in X} m_{xy} = \sum_{n \ge 0} n \rho_x(n)=:\bar \rho_x$.
Given a function $f$ defined on $X$ we denote by $Mf$ the function $Mf(x):=\sum_{y \in X} m_{xy}f(y)$
whenever the RHS makes sense.
We denote by
 $m^{(n)}_{xy}$
the entries of the $n$th power matrix $M^n$ and we define
\begin{equation}\label{eq:generalgeomparam}
M_s(x,y) := \limsup_{n \to \infty} \sqrt[n]{m_{xy}^{(n)}}, \quad
M_w(x) := \liminf_{n \to \infty} \sqrt[n]{\sum_{y \in X} m_{xy}^{(n)}}, \qquad \forall x,y \in X.
\end{equation}

From equation~\eqref{eq:evolBRW},
it is straightforward to prove that the expected number of particles,
starting from an initial state $\eta_0$,
satisfies the recurrence equation $\E^{\eta_0}(\eta_{n+1}(x))
=\sum_{y \in X}m_{yx} \E^{\eta_0}(\eta_n(y))$
hence
\[ 
\E^{\eta_0}(\eta_n(x)) = \sum_{y \in X} m^{(n)}_{yx} \eta_0(y).
\] 

\begin{rem}\label{rem:BRWasRW}
Note that a BRW can be seen as a random walk on $\N^X$ (to be precise, on $S_X \subseteq \N^X$),
where $\mathbf{0}$ is an absorbing state.
If $\rho_x(0)>0$ for all $x \in X$ then every state in $S_X \setminus \{\mathbf{0}\}$ is transient. Basically this is
due to the fact that the probability of going into the state $\mathbf{0}$ starting from a state
$\eta \in S_X$ in one step is $\prod_{x \in X}\rho_x(0)^{\eta(x)}>0$; hence the probability of visiting
infinitely often the state $\eta$ without ending in the trap state $\mathbf{0}$ is $0$
(for a formal proof in the case of a branching process see \cite[Theorem 6.2]{cf:Harris63}).
\end{rem}

We introduce here some terminology borrowed from random walk and graph theory.
In general our definitions extend the classical ones which apply to graphs in
the following way: a discrete-time counterpart of an edge-breeding continuous-time BRW
(see Section~\ref{subsec:continuous}) has the 
property
$\mathcal{P}$ if and only if the underlying graph has the usual
property $\mathcal{P}$.
The BRW $(X,\mu)$ is called \textit{non-oriented} or \textit{symmetric} if $m_{xy}=m_{yx}$ for every $x,y \in X$.
$(X,\mu)$ is called \textit{nonamenable} if and only if
\begin{equation}\label{eq:amenable}
\inf
\left \{
\frac{\sum_{x \in S, y \in S^\complement} m_{xy}}{|S|}: S \subseteq X, |S| < \infty
\right \}=:\iota_{(X,\mu)}>0,
\end{equation}
and it is called \textit{amenable} otherwise.
The value $\iota_{(X,\mu)}$ is called \textit{isoperimetric constant} since in the
case of a continuous-time, edge-breeding BRW (see the end of Section~\ref{subsec:continuous})
this is the usual isoperimetric constant of the underlying multigraph and, in that case,
the nonamenability of the BRW is equivalent to the nonamenability of the multigraph
(see \cite[Section 3.3]{cf:BZ} for the definition).

For a generic discrete-time BRW, the set $X$ is not \textit{a priori} a graph;
nevertheless, the family of probability measures, $\{\mu_x\}_x$ induces in a natural way
a graph structure on $X$ that
we denote by $(X,E_\mu)$ where $E_\mu:=\{(x,y):m_{xy}>0\}\equiv\{(x,y): \exists f \in S_X, \mu_x(f)>0, f(y)>0\}$.
Roughly speaking, $(x,y)$ is and edge if and only if a particle living at $x$ can
send a child at $y$ with positive probability (from now on \textit{wpp}).
We say that there is a path from $x$ to $y$, and we write $x \to y$, if it is
possible to find a finite sequence $\{x_i\}_{i=0}^n$ (where $n \in \N$)
such that $x_0=x$, $x_n=y$ and $(x_i,x_{i+1}) \in E_\mu$
for all $i=0, \ldots, n-1$. If $x \to y$ and $y \to x$ we write $x \rightleftharpoons y$.
Observe that there is always a path of length $0$ from $x$ to itself.

We call the matrix $M=(m_{xy})_{x,y \in X}$ \textit{irreducible} if and only if
the graph $(X,E_\mu)$ is \textit{connected}, otherwise we call it \textit{reducible}. We denote by $\mathrm{deg}(x)$
the degree of a vertex $x$, that
is, the cardinality of the set $ \mathcal{N}_x:=\{y\in X: (x,y) \in E_\mu
\}$.
Note that if $(X,\mu)$ is non-oriented then the graph $(X,E_\mu)$ is non-oriented (that is, $(x,y) \in E_\mu$
if and only if $(y,x) \in E_\mu$).

\begin{defn}\label{def:survival}
The colony can survive in different ways: we say that the
colony \textsl{survives locally wpp} at $y \in X$ starting from $x \in X$
if
\[
\pr^{\delta_x}(\limsup_{n \to \infty} \eta_n(y)>0)>0;
\]
we say that it \textsl{survives globally wpp} starting from $x$ if
\[
\pr^{\delta_x} \Big (\sum_{w \in X} \eta_n(w)>0, \forall n \in N \Big )>0
\]
(or, equivalently, $\pr^{\delta_x}(\limsup_{n \to \infty} \sum_{w \in X}\eta_n(w)>0$).

\noindent Let us define the probabilities of extinction
$q(x,y):=1-\pr^{\delta_x}(\limsup_{n \to \infty} \eta_n(y)>0)$ and
$\bar q(x):=1-\pr^{\delta_x} \Big (\sum_{w \in X} \eta_n(w)>0, \forall n \in N \Big )$.
Following \cite{cf:GMPV09}, we say that the
there is \textsl{strong local survival wpp} at $y \in X$ starting from $x \in X$
if
\[
q(x,y)=\bar q(x)<1.
\]
Finally we say that the BRW is in a \textit{pure global survival phase} starting from $x$ if
\[
\bar q(x)<q(x,x)=1.
\]
From now on when we talk about survival, ``wpp'' will be tacitly understood.
Often we will say simply that local survival occurs ``starting from $x$'' or ``at $x$'':
in this case we mean that $x=y$.
\end{defn}

Roughly speaking, there is global survival if there are particles alive somewhere at all times wpp and
there is local survival at $y$ if there are particles alive at $y$ at arbitrarily large times wpp.
Strong local survival at $y$ starting from $x$ requires that the probability of local survival at $y$
equals the probability of global survival starting from $x$ and that they are both positive.
Equivalently, there is strong survival at $y$ starting from $x$ if and only if the probability
of local survival at $y$ starting from $x$ conditioned on global survival starting from $x$ is $1$.
One can show
that strong local survival implies that for almost all realizations the process either survives locally
and globally or it goes extinct. The typical case (but not the only one)
where there is global but no strong local survival is being in a pure global survival phase.

Clearly local survival at some $y$ starting from $x$ implies global survival starting from $x$,
since $\bar q(x) \le q(x,y)$ for all $x,y \in X$. It is easy to construct examples where
$\bar q(x)<1=q(x,y)$ (see Section~\ref{subsec:pureweak}). One may wonder whether it is
possible to find examples of BRWs where $\bar q(x) < q(x,y) < 1$;
according to Examples~\ref{rem:nonstrongandstrong} and \ref{ex:nonstronglocalFBRW} the answer is positive.

We observe that
if $x \to y$ 
then local survival at $x$ implies local survival
at $y$ starting from any $w$ such that $w \to x$  ($q(w,y) \le q(w,x)$ for all $w\in X$). Analogously,
if $x \to y$ then
global survival starting from $y$ implies global survival starting from $x$
(indeed $\bar q(x) \le \bar q(y)$).
Moreover if $x \to y$, $w \to w^\prime$ and $q(w^\prime,x)<1$ then $q(w,y)<1$.
In particular if $x \rightleftharpoons y$ then local (resp.~global) survival
starting from $x$ is equivalent to local (resp.~global) survival starting from $y$.
As a consequence, if $M$ is irreducible then the process survives locally (resp.~globally) at one vertex if and
only if it survives locally (resp.~globally) at every vertex.
In this case $q(x,y)=q(x,x)$ for all $x,y \in X$ (see Section~\ref{subsec:survivalprob} for details).
Note that even if in the irreducible case one cannot guarantee that $q(x,x)=q(y,y)$ or $\bar q(x)=\bar q(y)$ when $x \not = y$
(see for instance Example~\ref{exm:BP1-2}).

\begin{assump}\label{assump:1}
We assume henceforth that for all $x \in X$ there is a vertex $y \rightleftharpoons x$ such that
$\mu_y(f: \sum_{w:w \rightleftharpoons y} f(w)=1)<1$,
 that is, in every equivalence class (with respect to $\rightleftharpoons$)
there is at least one vertex where a particle
can have inside the class a number of children different from one wpp.
\end{assump}

\begin{rem}\label{rem:assump1}
The previous assumption guarantees that the restriction of the BRW
to an equivalence class is \textit{nonsingular} (see \cite[Definition II.6.2]{cf:Harris63}).
%
There is a technical reason behind the previous assumption.
The classical Galton--Watson branching process is a particular BRW where $X:=\{x\}$ is a singleton
and $S_X$ and $\mu_x$ can be identified with $\N$ and a probability measure on $\mathbb N$
respectively. It is well-known that
\begin{itemize}
 \item if $\mu_x(1)=1$ then $m_{xx}=1$ and there is survival with probability $1$;

\item if $\mu_x(1)<1$ then there is survival wpp if and
only if $m_{xx}>1$,
\end{itemize}
(see also Example~\ref{ex:branchingprocess}).
Hence the condition  $m_{xx}>1$ is equivalent to survival under
Assumption~\ref{assump:1}.
\end{rem}


For a generic BRW, we call \textit{diffusion matrix} the matrix $P$ with entries $p(x,y)=m_{xy}/\bar \rho_x$.
Note that $P$ is a stochastic matrix which defines a random walk on $X$,
but it is not true in general that
the offsprings are dispersed independently according to $P$. This last updating rule
characterizes a particular, but meaningful, subclass of discrete-time processes that we call
\textit{BRWs with independent diffusion}:
a particle at site $x$ lives one unit of time
and is replaced by
a random number of children (with law $\rho_x$) which
are dispersed independently on $X$, according to a stochastic matrix $P$.
This rule is a particular case of the
general one, since here one simply chooses 
\begin{equation}\label{eq:particular1}
\mu_x(f)=\rho_x \left (\sum_y f(y) \right )\frac{(\sum_y f(y))!}{\prod_y f(y)!} \prod_y p(x,y)^{f(y)},
\quad \forall f \in S_X.
\end{equation}
\smallskip
Clearly in this case
 the expected number of children at $y$ of a particle
living at $x$ is
\begin{equation}\label{eq:meanparticular}
 m_{xy}=p(x,y) \bar \rho_x.
\end{equation}

\subsection{Continuous-time Branching Random Walks}
\label{subsec:continuous}

Continuous-time BRWs have been studied extensively by many authors;
in this section we make use of a natural correspondence between continuous-time BRWs and
discrete-time BRWs which preserves both local and global behaviors.

In continuous time each particle has an exponentially distributed
random lifetime with parameter 1. The breeding mechanisms can
be regulated by means of a nonnegative matrix $K=(k_{xy})_{x,y \in X}$ in such a way that
for each particle at $x$,
there is a clock with $Exp(\lambda k_{xy})$-distributed intervals (where $\lambda>0$),
each time the clock
rings the particle breeds in $y$. We say that the BRW has a death rate 1 and a
reproduction rate $\lambda k_{xy}$ from $x$ to $y$.
We observe (see Remark~\ref{rem:deathrate}) that the assumption
of a nonconstant death rate does not represent a
significative generalization.
We denote by  $(X,K)$ a family of continuous-time BRWs (depending on the parameter $\lambda>0$),
while we use the notation $(X,\mu)$ for
a discrete-time BRW.

Equivalently, one can associate to each particle at $x$
a clock with $Exp(\lambda k(x))$-distributed intervals ($k(x)=\sum_y k_{xy}$):
each time the clock rings the particle breeds and the offspring is placed
at random according to a stochastic matrix $P$
(where $p(x,y)=k_{xy}/k(x)$).

To a continuous-time BRW
one can associate a discrete-time counterpart;
here is the construction.
The initial particles represent the generation $0$ of the discrete-time BRW;
the generation $n+1$ (for all $n \ge 0$) is obtained by
considering the children of all particles of generation $n$
(along with their positions).
Clearly the progenies of the original continuous-time BRW and of its discrete-time counterpart
are both finite (or both infinite) at the same time.
Moreover, almost surely, the two processes have the same local and global behavior.
In this sense the theory of continuous-time BRWs, as long as we
are interested in the probability of survival (local, strong local and global),
is a particular case of the theory of discrete-time BRWs.

Elementary calculations show that each particle living at $x$, before dying,
has a random number of offsprings given by equation~\eqref{eq:particular1} where
\begin{equation}\label{eq:counterpart}
\rho_x(i)=\frac{1}{1+\lambda k(x)} \left ( \frac{\lambda k(x)}{1+\lambda k(x)} \right )^i, \qquad
p(x,y)=\frac{k_{xy}}{k(x)},
\end{equation}
and this is the law of the discrete-time counterpart; note that the discrete-time
counterpart of a continuous-time BRW is a BRW with independent diffusion.
Note that $\rho_x$ depends only on $\lambda k(x)$.
Using equation~\eqref{eq:meanparticular},
it is straightforward to show that $m_{xy}=\lambda k_{xy}$ and $\bar \rho_x=k(x)$.
Note that,
for a continuous-time BRW the first-moment matrix $M$ equals $\lambda K$.
From equation~\eqref{eq:counterpart} we have that, for any $\lambda>0$, the discrete-time
counterpart satisfies Assumption~\ref{assump:1}.


\begin{rem}\label{rem:deathrate}
The same construction applies to continuous-time BRWs with a death rate $d(x)>0$ dependent on $x \in X$. In this
case the discrete-time counterpart satisfies equation~\eqref{eq:particular1} where
\[ 
\rho_x(i)=\frac{d(x)}{d(x)+\lambda k(x)} \left ( \frac{\lambda k(x)}{d(x)+\lambda k(x)} \right )^i, \qquad
p(x,y)=\frac{k_{xy}}{k(x)}.
\] 
Hence, from the point of view of local and global survival, this process is equivalent to a continuous-time BRW with death rate
$1$ and reproduction rate $\lambda k_{xy}/d(x)$ from $x$ to $y$.
\end{rem}

All the definitions given in the previous section extend to the continuous-time case:
a continuous-time BRW has a certain property if and only if its discrete-time counterpart
has it. In particular, we observe that, for a continuous-time BRW, the isoperimetric
constant defined in equation~\eqref{eq:amenable} equals to $\lambda \iota_{(X,K)}$
where $\iota_{(X,K)}$ is the constant when $\lambda=1$. Hence either the BRW
is nonamenable for all $\lambda >0$ or it is amenable for all $\lambda >0$.

\smallskip

Given $x_0 \in X$, two critical parameters are associated to the
continuous-time BRW: the \textit{global} (or \textit{weak})
\textit{survival critical parameter} $\lambda_w(x_0)$ and the  \textit{local} (or  \textit{strong})
 \textit{survival critical parameter} $\lambda_s(x_0)$.
They are defined as
\[ 
\begin{split}
\lambda_w(x_0)&:=\inf \Big \{\lambda>0:\,
\pr^{\delta_{x_0}}\Big (\sum_{w \in X} \eta_t(w)>0, \forall t\Big) >0 \Big \},\\
\lambda_s(x_0)&:=
\inf\{\lambda>0:\,
\pr^{\delta_{x_0}} \big(\limsup_{t \to \infty} \eta_t(x_0)>0 \big) >0
\},
  \end{split}
\] 
where $\mathbf{0}$ is the configuration with no particles at all
sites and $\pr^{\delta_{x_0}}$ is the law of the process which
starts with one individual in $x_0$. The process is called
\textit{globally supercritical}, \textit{critical} or \textit{subcritical}
if $\lambda>\lambda_w$, $\lambda=\lambda_w$ or $\lambda<\lambda_w$;
an analogous definition is given for the local behavior using $\lambda_s$ instead of $\lambda_w$.

If the graph $(X, E_{\mu})$
is connected (that is, the BRW is irreducible) then these values do not depend on the initial
configuration, provided that this configuration is finite (that
is, it has only a finite number of individuals), nor on the choice
of $x_0$.
If we have $(X,K)$ and $(Y,\bar K)$ such that $Y\subseteq X$
and $k_{xy}\ge \bar k_{xy}$ for all $x,y\in Y$ then for all $x\in Y$
we have $\lambda_s^X(x)\le \lambda_s^Y(x)$ and  $\lambda_w^X(x)\le \lambda_w^Y(x)$.
In particular we say that there exists a \textit{pure global survival phase} starting from $x$ if the
interval $(\lambda_w(x),\lambda_s(x))$ is not empty; clearly, if $\lambda  \in (\lambda_w(x),\lambda_s(x))$
then the BRW is in a pure global survival phase according to Definition~\ref{def:survival}.

Given a continuous-time BRW $(X,K)$ we define the following two
families of parameters
\begin{equation}\label{eq:geomparam}
K_s(x,y) := \limsup_{n \to \infty} \sqrt[n]{k_{xy}^{(n)}}, \quad
K_w(x) := \liminf_{n \to \infty} \sqrt[n]{\sum_{y \in X} k_{xy}^{(n)}}, \qquad \forall x,y \in X,
\end{equation}
introduced in \cite{cf:BZ, cf:BZ2} where they are called $M_s$ and $M_w$.
Note that supermultiplicative arguments imply that
$K_s(x,x) =\lim_{n} (k^{(d(x)n)}_{xx})^{1/d(x)n}$ where
$d(x):=\textrm{gcd}\{n >0 : m^{(n)}_{xx}>0\}$ is the period of $x \in X$
(see \cite[Definition 2.19]{cf:Woess09};
hence,
for all $x \in X$, we have that
$K_s(x,x) \le K_w(x)$.
If the BRW is irreducible then $K_s(x,y)$ and $K_w(x)$ do not depend on $x,y \in X$.

Two special cases are particularly interesting in the continuous-time setting:
\textit{site-breeding BRWs} and \textit{edge-breeding BRWs}.
We say that a BRW is \textit{site-breeding} if $k(x)$ does not depend on $x \in X$
(cfr.~Definition~\ref{def:locallyisomorphic}); on one hand the number of children of a particle
is independent of the site, on the other hand, clearly, the diffusion matrix $P=(p(x,y))_{x,y \in X}$
can be inhomogeneous.
We say that a BRW is \textit{edge-breeding} if $X$ has a multigraph structure
(see \cite[Section 2.1]{cf:BZ} for a formal definition)
and $k_{xy}$
is the number of edges from $x$ to $y$; in this case to each edge there corresponds a constant
reproduction rate $\lambda$.
Thus the total reproduction rate for a particle living at $x$ is $\lambda \cdot \mathrm{deg}(x)$
where $\mathrm{deg}(x)$ is the number of edges \textit{from} $x$. The diffusion matrix $P$ in this
case is the transition matrix of the simple random walk on the underlying multigraph.
Note that if the multigraph is \textit{regular} (i.e.~$\mathrm{deg}(x)$ does not depend on $x$) then
the edge-breeding BRW is site breeding.

\subsection{Other dynamics}
\label{subsec:otherdynamics}

We describe some slightly different dynamics which, in fact,
have a natural discrete-time counterpart which has the
same local and global behavior.

Multitype BRWs has been studied by some authors (see for instance \cite{cf:MachadoMenshikovPopov}).
In these processes there is a metapopulation which consists of individuals
carrying a certain characteristic chosen among a family $I$ of types.
A particle of type $i\in I$ living
at $x \in X$ generates a random number of children of any type which are randomly
placed in $X$. More precisely, this can be seen as a discrete-time BRW where
the space is $X \times I$ and the family of probability measures $\{\mu_{x,i}\}_{x \in X, i \in I}$
are defined on $S_{X \times I}$. Thus, a particle of type $i$ living
at $x$ at the end of its lifetime dies and generates $f(y,j)$ children of
type $j$ at $y$ (for all $y \in X$ and $j \in I$) with probability $\mu_{x,i}(f)$.
For the multitype BRW the global survival of the metapopulation (resp.~the local
survival of a fixed type) is equivalent to the global (resp.~local) survival
of the single-type BRW on $X \times I$. On the other hand, the global survival
of a fixed type $i_0$ is equivalent to the survival in the subset $X \times \{i_0\}$,
while local survival at $x$ of the metapopulation is equivalent to the
survival on $\{x\} \times I$.

Time-dependent BRWs can be described similarly, if we have a time-dependent
family of laws $\{\bar \mu_{x,n}\}_{x \in X, n \in \N}$ we can construct a BRW
on $X\times \N$ where, at the end of its lifetime, a particle
at $(x, n)$ dies and generates $f(y)$ children at $(y, n+1)$, for all $y \in X$, with probability
$\mu_{x,n}(f)$.

We could define a continuous-time BRW where a particle living at $x$
is endowed with a Poisson clock of parameter $\lambda k(x)$; when this
clock rings it picks a function $f \in S_X$ with probability $\mu_{x}(f)$
and reproduces accordingly. All particles reproduce a random number of times
during their exponentially distributed (with mean $1$) lifetime.
This process has a discrete-time counterpart which can be constructed as in
Section~\ref{subsec:continuous}. In this case the discrete-time counterpart
in general does not satisfy equation~\eqref{eq:particular1}.

Another definition of BRW, which is used by some authors (see for instance \cite{cf:KestenSidorav} or
\cite{cf:Zaehle}) is
the following. Each particle moves according to a random walk on $X$. After a random
number of steps it dies and is replaced by a random number of children, whose law may depend
on the final position. Again this process has a natural counterpart:
if in the original BRW a particle starts its life at $x$ and dies at $y$ giving
birth to $n$ children then in the discrete-time counterpart this particle does not move
and, after one unit of time, it generates $n$ children at $y$.   Note that
in terms of local and global survival these two processes are equivalent.

\subsection{Reproduction trails}\label{subsec:trails}

A fundamental tool which allows us to give an alternative construction of the BRW is
the reproduction trail (see \cite{cf:PemStac1}).
We fix an injective map $\phi:X\times X \times \Z \times \N \rightarrow \N$.
Let the family $\{ f_{i,n,x}\}_{i \in \Z,n \ge 0 , x \in X}$ be as in Section~\ref{subsec:discrete} and let
$\eta_0$ be the initial value. For any fixed realization of the process we call \textit{reproduction trail}
to $(x,n) \in X \times \N$ a sequence
\begin{equation}\label{eq:trail}
(x_0,i_0,1),(x_1,i_1,j_1), \ldots ,(x_{n},i_{n}, j_{n})
\end{equation}
such that $x=x_n$, $-\eta_0(x_0) \le i_0 <0$, $0 < j_l \le f_{i_{l-1}, l-1, x_{l-1}}(x_l)$
and $\phi(x_{l-1}, x_l, i_{l-1}, j_l)=i_l$, where $0 <l \le n$.
The interpretation  is the following:
$i_n$ is the identification number of the particle, which
lives at $x_n$ at time $n$ and is the $j_n$-th offspring
of its parent. The sequence $\{x_0, x_1, \ldots, x_n\}$ is
the path induced by the trail (sometimes, we say
that the trail is based on this path).
Given any element $(x_l, i_l, j_l)$ of the trail~\eqref{eq:trail}, we say that the particle identified
by $i_n$ is a descendant of generation $n-l$ of the particle identified by $i_l$ and the trail
joining them is $(x_l,i_l,j_l), \ldots ,(x_{n},i_{n}, j_{n})$. We also say that the trail
of the particle $i_n$ is a prolongation of the trail of the particle $i_l$.

Roughly speaking the trail represents the past history
of each single particle back to its original ancestor, that is, the one living at time $0$; we note that from the couple
$(n, i_n)$, since the map $\phi$ is injective, we can trace back the entire genealogy of the particle.
The random variable $\eta_n(x)$ can be alternatively defined as the number of reproduction trails to
$(x,n)$.
This construction does not coincide with the one
induced by the equation~\eqref{eq:evolBRW} but the resulting processes have the same laws.

\subsection{Generating functions}\label{subsec:genfun}

Later on we will need some generating functions, both
1-dimensional and
infinite dimensional.
Define $T^n_x:=\sum_{y \in X}m^{(n)}_{xy}$ and
$\varphi^{(n)}_{xy}:=\sum_{x_1,\ldots,x_{n-1} \in X \setminus\{y\}} m_{x x_1} m_{x_1 x_2} \cdots m_{x_{n-1} y}$
(by definition $\varphi^{(0)}_{xy}:=0$ for all $x,y \in X$).
$T^n_x$ is the expected number of particles alive at time $n$ when the initial state is a single
particle at $x$.
Roughly speaking, $\varphi^{(n)}_{xy}$ is
the expected number of particles alive at $y$ at time $n$
when the initial state is just one particle at $x$ and the
process behaves like a BRW except that every particle reaching
$y$ at any time $i <n$ is immediately killed (before breeding).
In other words $\varphi^{(n)}_{xy}$ is the expected number of particles alive at $y$ at time $n$
whose trail did not hit any $(y,i,k)$ with $k <n$.

Let us consider the following family of 1-dimensional generating functions
(depending on $x,y \in X$), where $\lambda \in \C$:
\[
\begin{split}
\Gamma(x,y|\lambda)&:=\sum_{n =0}^\infty m^{(n)}_{xy} \lambda^n,
\qquad \Phi(x,y|\lambda):=\sum_{n =1}^\infty \varphi_{xy}^{(n)} \lambda^n.
\end{split}
\]
To compare with random walk theory, $\Gamma$ is the analog of the
\textit{Green function} (cfr.~\cite[Section 1.C]{cf:Woess09}) and
$\Phi$ is the analog of the generating function of
the first-return probabilities (cfr.~the function $U$ of \cite[Section 1.C]{cf:Woess09}).
%
It is easy to prove that $\Gamma(x,x|\lambda)= \sum_{i \in \N} \Phi(x,x|\lambda)^i$ for all $\lambda>0$, hence
\[ 
\Gamma(x,x|\lambda)= \frac{1}{1-\Phi(x,x|\lambda)},
\qquad \forall \lambda \in \C: |\lambda|< 
M_s(x,x)^{-1},
\] 
and we have that $M_s(x,x)
^{-1}=\max\{ \lambda 
\in {\mathbb R}:\Phi(x,x|\lambda)\leq 1\}$
for all $x \in X$. 
In particular $\Phi(x,x|1) \le 1$ if and only if
$
M_s(x,x) \le 1$.
The interpretation of $\Gamma(x,y|1)$ is the expected value of the total number of descendants at $y$ of
a common ancestor living at $x$. On the other hand, $\Phi(x,y|1)$ is the expected number of descendants at $y$, of
a common ancestor living at $x$, whose trails start from $x$ and arrive at $y$ for the first time.

The classical approach to branching processes (see for instance \cite{cf:Harris63}) makes use of the one-dimensional
generating function of the offspring distribution $\rho$: $\widetilde G(z)=\sum_{n \in \N} \rho(n) z^n$, whose
minimal fixed point is the probability of extinction (see also Example~\ref{ex:branchingprocess}). Inspired by
this approach, we associate a generating function $G:[0,1]^X \to [0,1]^X$
to the family $\{\mu_x\}_{x \in X}$
which can be considered as an infinite dimensional power series (see also \cite[Section 3]{cf:BZ2}). More precisely,
for all $z \in [0,1]^X$ the function $G(z) \in [0,1]^X$ is defined as the following weighted sum of (finite) products
\begin{equation}
\label{eq:genfun}
G(z|x):= \sum_{f \in S_X} \mu_x(f) \prod_{y \in X} z(y)^{f(y)}.
\end{equation}
Note that $G$ is continuous with respect to the \textit{pointwise convergence topology} of $[0,1]^X$  and nondecreasing
with respect to the usual partial order of $[0,1]^X$ (see \cite[Sections 2 and 3]{cf:BZ2} for further details).
Moreover, $G$ represents the 1-step reproductions; we denote by $G^{(n)}$ the generating function
associated to the $n$-step reproductions, which is inductively defined as $G^{(n+1)}(z)=G^{(n)}(G(z))$.

The generating function $G$ can be explicitly computed in some cases: for instance,
for a BRW with independent diffusion (i.e.~if equation~\eqref{eq:particular1} holds).
Indeed in this case it is straightforward
to show that
$G(z|x)=F_x(Pz(x))$
where $F_x(y)=\sum_{n=0}^\infty \rho_x(n)y^n$ is the generating function of the number of children
and $Pz(x)=\sum_{y \in X} p(x,y)z(y)$ is the transition operator of the corresponding random walk.
In particular if $\rho_x(n)=\frac{1}{1+\bar \rho_x} (\frac{\bar \rho_x}{1+\bar \rho_x} )^n$
(for instance if we are dealing with the discrete-time counterpart of a continuous-time BRW, see
equation~\eqref{eq:counterpart}), we have
$G(z|x)=\frac{1}{1+\bar \rho_x(1-Pz(x))}$ (see \cite[Section 3.1]{cf:BZ2}),
that is,
\begin{equation}\label{eq:Gcontinuous}
G(z)= \frac{\mathbf{1}}{\mathbf{1}+M(\mathbf{1}-z)}
\end{equation}
where $\mathbf{1}(x):=1$ for all $x \in X$, the ratio is to be intended as coordinatewise and
$Mv$ was defined in Section~\ref{subsec:discrete}; in this case
$m_{xy}$ is given by equation~\eqref{eq:meanparticular}, thus $M=\lambda K$.

The following proposition is a sort of \textit{maximum principle} for
the function $(z-\bar q)/(\mathbf{1}-\bar q)$ where $G(z) \ge z$
(see Section~\ref{subsec:nodeath}).

\begin{pro}\label{pro:maximumprinciple}
 Let  $z \in [0,1]^X$, $z \ge \bar q$ be a solution of the inequality $G(z) \ge z$.
If $\bar q < \mathbf{1}$ and we define
$\widehat z:=(z-\bar q)/(\mathbf{1}-\bar q)$ (by definition $\widehat z(x):=1$ for all
$x$ such that $\bar q(x)=1$) 
then for all $x \in X$ such that the set $\mathcal{N}_x=\{y:(x,y) \in E_\mu\}$
is not empty,
either $\widehat z(y) = \widehat z(x)$ for all $y \in \mathcal{N}_x$ or there exists
$y \in  \mathcal{N}_x$ such that $\widehat z(y)> \widehat z(x)$. In particular
if $\widehat z(x)=1$ then for all $y \in \mathcal{N}_x$ we have $\widehat z(y)=1$.
The same results hold if we take the set $\{y \in X : x \to y\}$ instead of $\mathcal{N}_x$.
\end{pro}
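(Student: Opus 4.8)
The plan is to rephrase the dichotomy as the assertion that $\widehat z$ admits no strict local maximum along the edges of $(X,E_\mu)$, and to prove it by contradiction. Suppose the conclusion fails at some $x$ with $\mathcal{N}_x\neq\emptyset$; writing $c:=\widehat z(x)$, this means $\widehat z(y)\le c$ for every $y\in\mathcal{N}_x$ while $\widehat z(y_0)<c$ for some $y_0\in\mathcal{N}_x$. I will contradict the hypothesis $G(z|x)\ge z(x)$ using three structural facts: (i) $\bar q$ is a fixed point of $G$, i.e.\ $G(\bar q)=\bar q$ (the global extinction probability satisfies the branching recursion, as recalled in the introduction); (ii) $G(\mathbf 1)=\mathbf 1$, since each $\mu_x$ is a probability measure; and (iii) $G(\cdot|x)$ is nondecreasing and is convex along the \emph{increasing} segment joining $\bar q$ to $\mathbf 1$, a point I return to below.

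For the main computation I introduce $w:=(1-c)\bar q+c\mathbf 1=\bar q+c(\mathbf 1-\bar q)$, the point at parameter $t=c$ on that segment. Using the substitution $z(y)=\bar q(y)+(1-\bar q(y))\widehat z(y)$, the inequality $\widehat z(y)\le c$ unwinds to $z(y)\le w(y)$ for every $y\in\mathcal{N}_x$, strictly at $y_0$. Since $f$ is $\mu_x$-a.s.\ supported on $\mathcal{N}_x$ (because $m_{xy}=0$ for $y\notin\mathcal{N}_x$), monotonicity gives $G(z|x)\le G(w|x)$, and I claim this is strict. Next, convexity along the segment evaluated at $t=c$ yields $G(w|x)\le(1-c)G(\bar q|x)+c\,G(\mathbf 1|x)=(1-c)\bar q(x)+c=z(x)$. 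Chaining, $z(x)\le G(z|x)<G(w|x)\le z(x)$, a contradiction. The ``in particular'' clause is then immediate: if $\widehat z(x)=1$ no neighbour can exceed $1$, so the second alternative is impossible and all neighbours equal $1$.

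I then dispose of the boundary cases and the reachability variant. If $\bar q(x)=1$ the convention sets $\widehat z(x)=1$; since $(x,y)\in E_\mu$ gives $x\to y$ and hence $\bar q(y)\ge\bar q(x)=1$, every neighbour has $\widehat z(y)=1$, so the first alternative holds outright. For the version with $\{y:x\to y\}$ in place of $\mathcal{N}_x$, I iterate: from $G(z)\ge z$ and monotonicity one gets $G^{(n)}(z)\ge z$ for all $n$; moreover $G^{(n)}$ is the generating function of the $n$-step BRW $(X,\mu^{(n)})$, whose first-moment matrix is $M^n$ and which shares the fixed point $\bar q$ (so properties (i)--(iii) persist, and the proof above uses neither Assumption~\ref{assump:1} nor any irreducibility). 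Applying the neighbour version to $G^{(n)}$, whose neighbourhood of $x$ is $\{y:m^{(n)}_{xy}>0\}$, and choosing $n$ with $m^{(n)}_{xy_0}>0$ (possible since $x\to y_0$), upgrades the conclusion to all of $\{y:x\to y\}$.

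The genuinely delicate step is securing the strict inequality $G(z|x)<G(w|x)$. Here one observes first that the negation forces $c>0$ (otherwise $0\le\widehat z\le c=0$ would make all neighbours equal, contradicting the choice of $y_0$), so that $w(y)\ge c>0$ for every $y$ and no monomial $\prod_y w(y)^{f(y)}$ collapses to $0$; then, using $m_{xy_0}>0$, one selects $f^\ast$ with $\mu_x(f^\ast)>0$ and $f^\ast(y_0)>0$, for which $\prod_y w(y)^{f(y)}>\prod_y z(y)^{f(y)}$, making the corresponding term in $G(w|x)-G(z|x)$ strictly positive. The other subtlety is the correct reading of property~(iii): ordinary joint convexity of the monomials $\prod_y z(y)^{f(y)}$ fails, but along the coordinatewise-nondecreasing direction $\mathbf 1-\bar q$ each factor is a nonnegative, nondecreasing, affine (hence convex) function of the segment parameter, and a product of such functions is convex, so $t\mapsto G(\bar q+t(\mathbf 1-\bar q)\,|x)$ is convex on $[0,1]$. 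Everything else is routine bookkeeping with the substitution $z=\bar q+(\mathbf 1-\bar q)\widehat z$ and the fixed-point and monotonicity properties of $G$.
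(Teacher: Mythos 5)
Your proof is correct, but it follows a genuinely different route from the paper's. The paper first discards the vertices where $\bar q=1$ by restricting the BRW, then conjugates $G$ by $T_{\bar q}$ to pass to the associated no-death BRW of Section~\ref{subsec:nodeath} (so that $\bar q=\mathbf 0$ and $\widehat z=z$); in that setting the dichotomy follows from the one-line termwise estimate $\prod_y z(y)^{f(y)}\le z(y_0)<z(x)$ for the functions $f$ charging $y_0$, valid because $\mathcal{H}(f)\ge 1$ almost surely, and the $\{y: x\to y\}$ variant is obtained by induction along paths. You instead stay with the original $G$ and replace the no-death reduction by the observation that $t\mapsto G(\bar q+t(\mathbf 1-\bar q)\,|x)$ is convex, takes the values $\bar q(x)$ at $t=0$ and $1$ at $t=1$, and therefore lies below the chord, which in the $\widehat z$-coordinate is the identity; combined with monotonicity (using that $G(\cdot|x)$ only depends on the coordinates in $\mathcal{N}_x$) and your careful strictness argument at $y_0$ (including the check that $c>0$ so the monomials of $w$ do not vanish), this yields the same contradiction $z(x)\le G(z|x)<G(w|x)\le z(x)$. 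Your treatment of the reachability version via the $n$-step generating functions $G^{(n)}$, which share the fixed point $\bar q$ and whose neighbourhoods are $\{y:m^{(n)}_{xy}>0\}$, is likewise a legitimate alternative to the paper's path induction. What your approach buys is self-containedness: no appeal to the Harris conjugation and the construction of $\widehat\mu$; the price is having to justify convexity along increasing segments, which the paper establishes separately (in a stronger, strict form) as Lemma~\ref{lem:convexity}, so the two proofs are in the end of comparable length. One small caveat: in the boundary case you invoke the general inequality $\bar q(y)\ge\bar q(x)$ for $(x,y)\in E_\mu$, which is not valid as stated (take a particle at $x$ that with probability $1/2$ dies and with probability $1/2$ sends one child to a surviving vertex $y$: then $\bar q(x)=1/2>\bar q(y)=0$); however the implication you actually need, namely that $\bar q(y)<1$ and $x\to y$ force $\bar q(x)<1$, is true, so the step stands.
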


The proof, which makes use of some arguments of Section~\ref{subsec:nodeath},
can be found in Section~\ref{sec:proofs}.
We observe that in a finite, final irreducible class
(for instance if the BRW is irreducible and the set $X$ is finite) then
$\widehat z$ is constant if $z \ge \bar q$ is a solution of  $G(z) \ge z$.
Since the probabilities of extinction
$\{q(\cdot,A)\}_{A \subseteq X}$ (see Section~\ref{subsec:survivalprob} for the definition)
are solutions of $G(z)=z$,
Proposition~\ref{pro:maximumprinciple} applies with $z(\cdot)=q(\cdot,A)$ for all $A \subseteq X$.
In this case $\widehat z(x)$ can be interpreted as the probability of
local extinction in $A$ conditioned on global survival (starting from $x$).
Note that if $\mu_x(\mathbf{0})=0$ for 
all $x \in X$ (see Section~\ref{subsec:nodeath})
then $\bar q= \mathbf{0}$ and $\widehat z=z$.

As an application,
if we have an irreducible BRW on $\N$ where $m_{xy}>0$ implies $|x-y| \le 1$ we get the following behavior of
the extinction probabilities: for all $A \subseteq X$,
there exists $x_0 \in \N \cup \{+\infty\}$ such that $q(x,A)=q(0,A)$ for all $x \le x_0$ and
$q(x,A)<q(x+1,A)$ for all $x \ge x_0$.

Finally it is worth mentioning that for a BRW with independent
diffusion the following generating function is very
useful:
\[
 L(z|x):=z(x)\frac{Mz(x)}{\mathbf{1}+ \bar \rho_x}+\frac{\mathbf{1}}{\mathbf{1}+\bar \rho_x}.
\]
While $G$ is obtained by conditioning on the number
of total children of a particle at the end of its life, in continuous time $L$
can be obtained by conditioning on the first event (birth or death of
a particle).  By algebraic manipulation we have that,
if $G$ satisfies equation~\eqref{eq:Gcontinuous}, then
$z=G(z)$ if and only if $z=L(z)$.
In particular, using the same arguments as in Section~\ref{subsec:survivalprob},
we see that by means of $L$ one can compute iteratively the probability of extinction
before the $n$th event (either birth or death).

\section{Special processes}\label{sec:specialprocesses}

\subsection{$\mathcal F$-BRWs}\label{subsec:FBRWs}

Some results can be achieved if the BRW has some regularity; to this aim
we introduce the concept of $\mathcal F$-BRW (see also \cite[Definition 4.2]{cf:Z1}).

\begin{defn}
\label{def:locallyisomorphic}
We say that a BRW $(X, \mu)$ is locally isomorphic to a BRW $(Y,\nu)$ if there exists a
surjective map $g:X\to Y$ such that
\begin{equation}
\label{eq:fgraph}
\nu_{g(x)}(\cdot)=
\mu_x\left(\pi_g^{-1}(\cdot)\right)
\end{equation}
where $\pi_g:S_X \rightarrow S_Y$ is defined as $\pi_g(f)(y)=\sum_{z\in g^{-1}(y)}f(z)$ for all $f\in S_X$, $y \in Y$.
We say that $(X, \mu)$ is a $\mathcal F$-BRW if it is locally isomorphic to some
BRW $(Y,\nu)$ on a finite set $Y$.
\end{defn}
Clearly, if $(X,\mu)$ is locally isomorphic to $(Y, \nu)$ then
\begin{equation} \label{eq:Gfunctions}
G_X(z \circ g|x)=G_Y(z|g(x))
\end{equation}
for all $z \in [0,1]^Y$ and $x \in X$. Indeed $\pi_g$ is surjective and
\[
\begin{split}
 G_X(z \circ g|x)&=\sum_{f \in S_X} \mu_x(f) \prod_{w \in X} z(g(w))^{f(w)}
=  \sum_{h \in S_Y}  \sum_{f \in \pi_g^{-1}(h) } \mu_x(f) \prod_{w \in X} z(g(w))^{f(w)}\\
&=  \sum_{h \in S_Y}  \sum_{f \in \pi_g^{-1}(h) } \mu_x(f) \prod_{v \in Y} \prod_{w \in g^{-1}(v)} z(v)^{f(w)}\\
&=  \sum_{h \in S_Y}  \mu_x(\pi_g^{-1}(h)) \prod_{v \in Y} z(v)^{h(v)} \\
&= \sum_{h \in S_Y}  \nu_{g(x)}(h) \prod_{v  \in y} z(v)^{h(v)}=G_Y (z | g(x)).
\end{split}
\]
By induction on $n$
\[
G^{(n+1)}_X(z \circ g)=G_X(G_X^{(n)}(z \circ g))=G_X(G^{(n)}_Y(z)\circ g)=G_Y(G^{(n)}_Y(z))\circ g=
G^{(n+1)}_Y(z)\circ g,
\]
whence
\begin{equation}\label{eq:G-FBRWs}
 G^{(n)}_X(z \circ g)=G^{(n)}_Y(z)\circ g
\end{equation}
for all $n \in \N$.
%
We note that, since $\mu$ is uniquely determined by $G$,
 equation~\eqref{eq:Gfunctions} holds  if and only if $(X,\mu)$ is locally
isomorphic to $(Y,\nu)$ and $g$ is the map in Definition~\ref{def:locallyisomorphic}.
To see the ``only if'' part, define $\widehat \nu$ by using equation~\eqref{eq:fgraph} (substitute $\nu$ with $\widehat \nu$),
then equation~\eqref{eq:Gfunctions} holds with
$\widehat G$ instead of $G_Y$; thus $\widehat G=G_Y$ and this implies that equation~\eqref{eq:fgraph}
holds for $\nu$.

Using equation~\eqref{eq:G-FBRWs} and the fact that $\bar q= \lim_{n \to \infty} G^{(n)}(\mathbf 0)$
(see equation~\eqref{eq:extprobab} with $A=X$), it is possible to prove that
there is global survival for $(X,\mu)$ starting from $x$ if and only if
there is global survival for $(Y,\nu)$ starting from $g(x)$  (see \cite[Theorem 4.3]{cf:Z1}).

In particular note that the total offspring generating functions $G(t \mathbf 1|x)$ satisfy
$G_X(t\mathbf 1_X|x)= G_X(t\mathbf 1_Y \circ g|x)=G_Y(t\mathbf 1_Y | g(x))$, hence the offspring distribution
of a particle at $x$ behaving according to $(X,\mu)$ is the same of the offspring distribution of a particle
at $g(x)$ behaving according to $(Y,\nu)$.

If $(X,\mu)$ is locally isomorphic to a BRW $(Y, \nu)$ where $Y$ is a singleton (equivalently,
if the law of the total number of children $\rho_x$ does
not depend on $x \in X$) then we say that the BRW is \textit{locally isomorphic to a branching process}
(this case is discussed in details in Section~\ref{subsec:locisomBP}).

It is easy to prove that $\sum_{w \in X} m^X_{xw}v(x)= \frac{\diff}{\diff t} G_X(\mathbf 1 -(1-t)v|x)|_{t=1}$ for all $x \in X$ and
$v \in [0,1]^X$, hence, using equation~\eqref{eq:G-FBRWs}, we have
  $\sum_{w \in X} m^X_{xw}z(g(x))=
\sum_{y \in Y} m^Y_{g(x)y}z(y)$,
for all $x \in X$ and $z \in [0,1]^Y$. This, in turn, implies that
$\sum_{w \in X} m^{X,(n)}_{xw}z(g(w))=\sum_{y \in Y} m^{Y,(n)}_{g(x)y}z(y)$ for all $n \in \N$.
In particular, when $n=1$ and $z = \mathbf 1$ we have $\bar \rho^X_{x}=\bar \rho^Y_{g(x)}$.

In continuous time (see \cite{cf:BZ2})
we say that $(X,K)$ is \textit{locally isomorphic} to $(Y,\widetilde K)$ if and only if
there exists a surjective map $g:X \to Y$ such that
$\sum_{z \in g^{-1}(y)} k_{xz}=\widetilde k_{g(x)y}$
for all
$x \in X$ and $y \in Y$. We observe that
$(X,K)$ is locally isomorphic to $(Y,\widetilde K)$
if and only if the discrete-time counterparts satisfy
Definition~\ref{def:locallyisomorphic}; this can easily be checked
by proving that the equation~\eqref{eq:Gfunctions} holds when
$G_X$ and $G_Y$ satisfy equation~\eqref{eq:Gcontinuous}.
Note that a continuous-time BRW is site-breeding if and only if it
is locally isomorphic to a branching process.
On the other hand a continuous-time, edge-breeding BRW is locally isomorphic
to another edge-breeding BRW if and only if the underlying multigraphs
satisfy \cite[Definition 3.1]{cf:BZ}.

\begin{defn}\label{def:quasitransitive}
Let $\gamma:X \to X$ be an injective map. 
We say that $\mu=\{\mu_x\}_{x \in X}$ is $\gamma$-invariant if
for all $x,y \in X$ and $f \in S_X$ we have
$\mu_x(f)=\mu_{\gamma(x)}(f \circ \gamma^{-1})$.

Moreover $(X,\mu)$ is \textit{quasi transitive} if and only if there exists
a finite subset $X_0 \subseteq X$ such that for all $x \in X$ there exists a bijective
map $\gamma:X\to X$ and $x_0 \in X_0$ satisfying $\gamma(x_0)=x$ and $\mu$ is $\gamma$-invariant.
\end{defn}

The previous definition generalizes the usual one (which applies to graphs) in the following way:
a discrete-time counterpart of an edge-breeding continuous-time BRW is quasi transitive if and only
if the underlying graph is quasi transitive (that is, the action of the group of automorphisms
has only finitely many orbits).

We note that every quasi-transitive BRW is an $\cF$-BRW. Indeed, consider
the equivalence relation $x \sim y$ if and only if there exists a bijective map
$\gamma:X \to X$ such that $\gamma(x)=y$. Clearly if $Y:=X/_\sim$ then $\# Y \le \# X_0$.
Let $g$ be the usual projection from $X$ onto $X/_\sim$ and $\nu_{g(x)}(\cdot):=\mu_x(\pi_g^{-1}(\cdot))$.
We have to show that the last definition is well-posed. Note that if $\mu$ is $\gamma$ invariant
then $g=g \circ \gamma$ which implies $\pi_g(f)=\pi_{g \circ \gamma}(f)=\pi_g(f \circ \gamma^{-1})$; indeed
\[
 \pi_g(f)(y)=\sum_{z \in g^{-1}(y)} f(z)= \sum_{z \in (g \circ \gamma)^{-1}(y)} f(z)=
\sum_{z \in g^{-1}(y)} f(\gamma^{-1}(z))
\]
whence
for all $h \in S_Y$
\[
\begin{split}
 \mu_x(\pi_g^{-1}(h))&=\mu_{\gamma(x)}(\{f \circ \gamma^{-1}: \pi_g(f)=h\})\\
&=
\mu_{y}(\{f \circ \gamma^{-1}: \pi_g(f \circ \gamma^{-1})=h\})=
\mu_y(\pi_g^{-1}(h)).
\end{split}
\]
The class of $\mathcal{F}$-BRWs is strictly larger than the class of quasi-transitive BRWs. An example is given by
the BRW described in Example~\ref{ex:nonstronglocalFBRW}. Another one is the following
(a further example can be found in \cite[Example 3.2]{cf:BZ}).

\begin{exmp}\textbf{\cite[Example 3.1]{cf:BZ}}\label{rem:pureFBRW}
Take a square and attach to every vertex a branch of a homogeneous tree of degree $3$, obtaining a regular graph (of degree $3$)
which is not quasi transitive (see Figure~\ref{fig:square}).
If we attach now to each vertex a new edge with a new endpoint we obtain a
non-oriented, nonamenable
$\cF$-graph $(X,E(X))$ (see \cite[Definition 3.1]{cf:BZ}) which is neither regular nor quasi transitive. It is easily seen
(see \cite[Lemma 3.2]{cf:BZ}) to be locally isomorphic to a multigraph with
adjacency matrix
\[
N=
\begin{pmatrix}
3 & 1\\
1 & 0 \\
\end{pmatrix}.
\]
The corresponding continuous-time, edge-breeding BRW is an $\mathcal{F}$-BRW which is defined on a
(nonamenable) multigraph which is neither regular nor quasi transitive (thus, the process is not quasi transitive).
\end{exmp}

\begin{figure}
    \begin{center}
    \subfigure[\tiny The regular graph of Example~\ref{rem:pureFBRW}.]{\includegraphics[height=5cm]{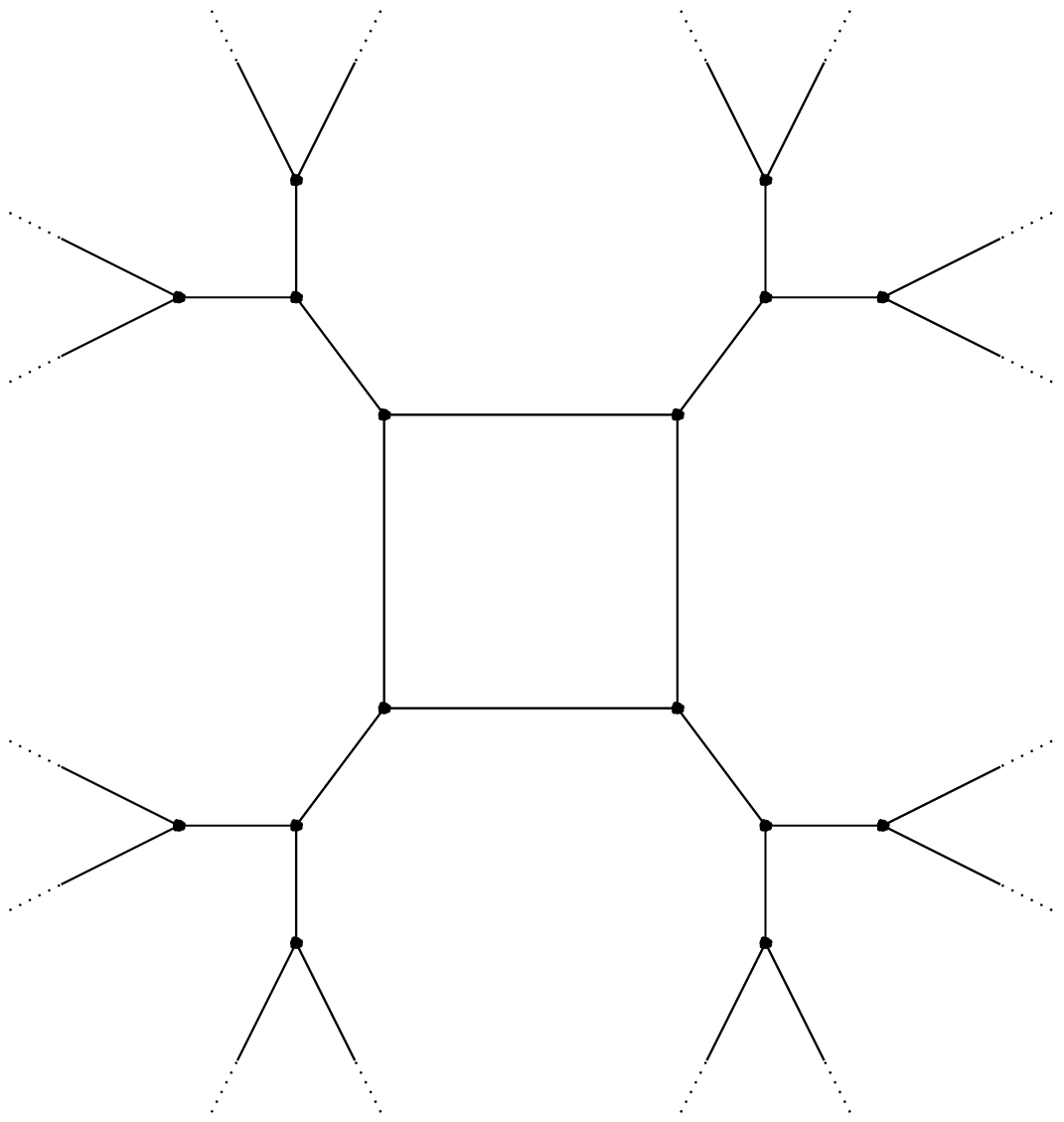}
    \label{fig:square}}
   \hspace{0.5cm}
    \subfigure[\tiny The graph $X$ of Example~\ref{exm:amenable}.]{\includegraphics[height=3.5cm]{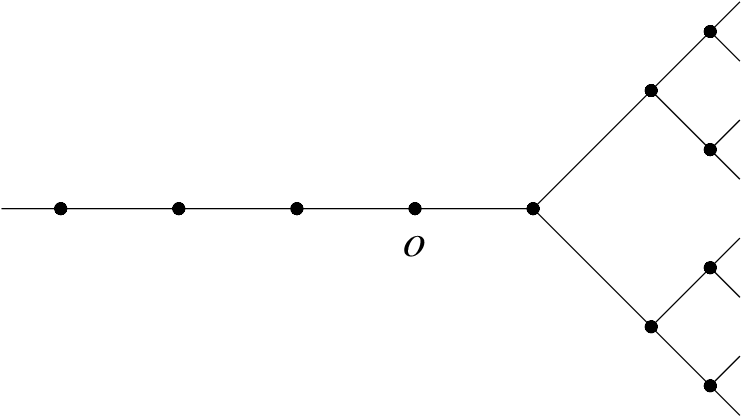}
    \label{fig:amenable}}
    \end{center}
\caption{}
\end{figure}

\subsection{BRWs with and without death: a comparison}
\label{subsec:nodeath}

Some authors (see for instance \cite{cf:CMP98, cf:GMPV09, cf:MachadoMenshikovPopov, cf:M08, cf:Muller08-2} and some
results of \cite{cf:MenshikovVolkov}) have extensively studied the special case
where $\mu_x(\mathbf 0)=0$ for all $x \in X$
or, that is the same, $\rho_x(0)=0$ for all $x \in X$.  We call this kind of process a
\textit{BRW with no death}; to be honest, in the usual interpretation each particle still dies but
it has at least one descendant almost surely.
On the other hand one can think that particles never die (in this case, $\rho_x(n)$ must be interpreted
as the probability of having $n-1$ children). We stick with the first interpretation.

Being the global survival trivial in this case,
it is interesting to explore the local behavior of the BRW. This situation is
the closest one to the random walk theory.
In particular,
the process can become extinct locally at $x$ with probability one (some authors call the
process \textit{transient} in this case) or survive locally with positive probability.
While in the general situation the colony cannot always survive with probability one (if
$\rho_x( 0)>0$ the process starting from $x$ might die at the first step),
a BRW with no death can survive locally at $x$ either with probability one
(strong local survival or \textit{recurrent BRW}) or with a strictly positive probability
different from one (\textit{weakly recurrent BRW}).

In this section we want to discuss how we may interpret results on
BRWs with no death in the general case. The first idea, which was introduced by Harris
in the case of a branching process (see for instance \cite{cf:Harris48}
or \cite[Chapter I.12]{cf:AthNey}), is to condition on global survival.
It is clear that a generic BRW,
such that $\bar q(x)<1$ for all $x \in X$,  
conditioned on global survival is not a BRW with no death
(it is not even a BRW). Indeed, for the conditioned process,
the behaviors of different particles of the same generation
are not independent: the probability of dying without breeding is strictly positive
but the probability that all particles in the same generation die out
without breeding is $0$. Nevertheless it is possible to associate to a generic irreducible BRW, with a fixed
starting configuration,
a BRW with no death.
Given a generic irreducible BRW $\{\eta_n\}_{n \in \N}$ consider the event $\Omega_\infty=
\{\sum_{x \in X} \eta_n(x)>0, \, \forall n \in \N\}$ and define
the process $\{\widehat \eta_n\}_{n \in \N}$ as follows:
$\widehat \eta_n(x, \omega)$ equals the number of particles in $\eta_n(x, \omega)$ with an infinite line of descent
when $\omega \in \Omega_\infty$ and it equals  $0$ when $\omega \not \in \Omega_\infty$.
It can be shown that this process, restricted to $\Omega_\infty$ is a BRW (that we call again
$\{\widehat \eta_n\}_{n \in \N}$) and its generating function is
\begin{equation}\label{eq:G-one}
\widehat G(z|x) = \frac{G(v(z)|x)-\bar q(x)}{1-\bar q(x)}
\end{equation}
where $\bar q(x)$ is the probability of global extinction starting from $x \in X$
(see Sections~\ref{subsec:discrete} and
\ref{subsec:survivalprob}),
$G$ is the generating function of the original BRW and $v:[0,1]^X \rightarrow [0,1]^X$ is
defined as $v(z|x):=\bar q(x)+z(x)(1-\bar q(x))$.
In a more compact way equation~\eqref{eq:G-one} can be written as $\widehat G=T^{-1}_{\bar q} \circ G \circ T_{\bar q}$ where
$T_w:[0,1]^X \to \{z \in [0,1]^X: w \le z\}$ is defined as $T_wz(x):=z(x)(1-w(x))+w(x)$; note that
$T_w$ is nondecreasing and, if $w(x)  <{1}$ for all $x \in X$, bijective.
More explicitly
\[
\begin{split}
 \widehat G(z|x)&=\sum_{f \in S_X} \widehat \mu_x(f)
\prod_{y \in X} z(y)^{f(y)},\\
\textrm{where }
\widehat \mu_x(f)&=
\begin{cases}
 \displaystyle \frac{
\sum_{g \in S_X:g \ge f} \mu_x(g) \prod_{y \in X} \binom{g(y)}{f(y)} \bar q(y)^{g(y)-f(y)}(1-\bar q(y))^{f(y)}}{1-\bar q(x)} &
\ \textrm{if }f \not = \mathbf{0}\\
\displaystyle 0 &\ \textrm{if }  f = \mathbf{0}.
\end{cases}
\end{split}
\]
Indeed $\{\widehat \eta_n\}_{n \in \N}$ is a Markov process and if we consider the offsprings
of a particle living at $x$ at some time $n$, then its reproduction law conditioned
on the global survival of the process and on having an infinite line of descent is equal to
the reproduction law of an initial particle at $y$ conditioned on $\mathcal{A}_x$, that is
\[
 \pr(\widehat \eta_1=f|\mathcal{A}_x)=
\begin{cases}
 \displaystyle\frac{\pr(\widehat \eta_1=f)}{1-\bar q(x)}
& f > \mathbf{0}\\
0 & f=\mathbf{0}
\end{cases}
\]
where $\mathcal{A}_x$ is the event that there is global survival (i.e.~the first particle has an infinite line of descent) starting
from one particle at $x \in X$ (clearly $\pr(\mathcal{A}_x)=1-\bar q(x)$).
Hence, if $f > \mathbf{0}$,
\[
 \pr(\widehat \eta_1=f|\mathcal{A}_x)=
\frac{1}{1-\bar q(x)} \sum_{g \in S_X: g \ge f} \mu_x(g) \prod_{y \in Y} \binom{g(y)}{f(y)} (1-\bar q(y))^{f(y)}
\bar q(y)^{g(y)-f(y)}.
\]
This implies that
\[
\begin{split}
\widehat G(z|x)&=
 \sum_{f \in S_X} \pr(\widehat \eta=f|\mathcal{A}_x) \prod_{y \in Y} z(y)^{f(y)} \\
&= \sum_{g \in S_X: g > \mathbf{0}}\frac{\mu_x(g)}{1-\bar q(x)} \sum_{f \in S_X: \mathbf{0} <f \le g}
\prod_{y \in Y} \binom{g(y)}{f(y)} (z(y)(1-\bar q(y)))^{f(y)}
\bar q(y)^{g(y)-f(y)}\\
&=\sum_{g \in S_X: g > \mathbf{0}}\frac{\mu_x(g)}{1-\bar q(x)} \left [\prod_{y \in Y}
(T_{\bar q}z(y))^{g(y)}- \prod_{y \in Y} \bar q(y)^{g(y)}
\right ]\\
&=\frac{1}{1-\bar q(x)} \left [G(T_{\bar q}z|x)-G(\bar q|x)\right ]
=\frac{1}{1-\bar q(x)} \left [G(T_{\bar q}z|x)-\bar q(x)\right ].\\
\end{split}
\]

It can be shown, following \cite{cf:AthNey}, that many results about survival
are true for $\{\eta_n\}_{n \in \N}$ if and only if
they are true for $\{\widehat \eta_n\}_{n \in \N}$. Proving this equivalence
in details goes beyond the purpose of this chapter.
Nevertheless we observe that, if $\bar q < \mathbf{1}$ then $T_{\bar q}$ is a bijective map from the set
of fixed points of $\widehat G$ to the set of fixed points of $G$. Moreover,
since $\{\widehat \eta_n\}_{n \in \N}$ is obtained
by $\{\eta_n\}_{n \in \N}$ by removing all the particles with finite progeny, which
are clearly irrelevant in view of the survival due to the fact that $\bar q(x)<1$ for all $x \in X$,
%
%
we have immediately that the probability of local survival of
$\{\widehat \eta_n\}_{n \in \N}$ in $A$ (for all $A \subseteq X$),
starting from $x$ is equal to the same probability for $\{\eta_n\}_{n \in \N}$,
that is, $q(x,A)$. In particular the probability of local survival at
$A$ starting from $x$ conditioned on $\mathcal{A}_x$
is $1-(T_{\bar q}^{-1} q(\cdot,A))(x)= (1-q(x,A))/(1-\bar q(x))$.

We call the process $\{\widehat \eta_n\}_{n \in \N}$ conditioned on $\mathcal{A}_x$ the \textit{no-death BRW associated to}
$\{\eta_n\}_{n \in \N}$ starting from $x \in X$.

\section{Survival}
\label{sec:survival}

\subsection{Probabilities of extinction}
\label{subsec:survivalprob}

Define $q_n(x,A)$ as the probability of extinction before
generation $n+1$ in $A$ starting with one particle at $x$, namely
$q_n(x,A)=\Prob(\eta_k(x)=0,\, \forall k\ge n+1,\,\forall x\in A)$. It is
clear that $\{q_n(x,A)\}_{n \in \N}$ is a nondecreasing sequence satisfying
\begin{equation}\label{eq:extprobab}
 \begin{cases}
 q_{n}(\cdot,A)=G(q_{n-1}(\cdot, A)),& \quad \forall n \ge 1\\
 q_0(x,A)=0, &\quad \forall x \in A,\\
{q}_0(x,A)=G(\mathbf{q}_0,x) &\quad  \forall x \not \in A,\\
\end{cases}
\end{equation}
hence there is a limit $q(x,A)=\lim_{n \to \infty} q_n(x, A) \in
[0,1]^X$ which is the probability of local extinction in $A$
starting with one particle at $x$. Note that equation~\eqref{eq:extprobab}
defines completely the sequence $\{q_n(\cdot,A)\}_{n \in \N}$ only when
$A=X$ (otherwise one needs the values $q_0(x,A)$ for $x \not \in A$).
For details on the last equality in equation~\ref{eq:extprobab} see Remark~\ref{rem:q0}. 
Since $G$ is continuous we have that $q(\cdot,A)=G(q(\cdot,
A))$, hence these probabilities are
fixed points of $G$
(and Proposition~\ref{pro:maximumprinciple} applies). Note that $q(\cdot, \emptyset)= \mathbf 1$,
$q(\cdot, X)=\bar q(\cdot)$ and $q(\cdot, \{y\})=q(\cdot, y)$ (see Definition~\ref{def:survival}).
%
It can be shown (see \cite[Corollary 2.2]{cf:BZ2})
that $\bar q$ is the smallest fixed point of $G(z)$ in $[0,1]^X$,
since it is $\bar q=\lim_{n \to \infty} G^{(n)}(\mathbf{0})$.
Using the same arguments, one can prove that $\bar q$ is the smallest
fixed point of $G^{(m)}$ for all $m \in \N$.

Note that $A \subseteq B$ implies $q(\cdot,A)
\ge q(\cdot,B)$. In particular, $q(\cdot,y)
\ge \bar q$ for all $y \in X$. Since for all finite $A\subseteq X$ we have $q(x,A) \ge 1-\sum_{y \in A} (1-q(x,y))$
then, for any given finite $A \subseteq X$, $q(x,A)=1$ if and only if $q(x,y)=1$ for all $y \in A$.

Moreover, given a BRW $(X,\mu)$ and $Y\subseteq X$,
consider $(Y,\nu)$ obtained by killing all particles outside $Y$; in this case
$q^X(x,A)\le q^Y(x,A)$ for all $x\in Y$, $A\subseteq Y$.

If $x \to x^\prime$ and $A \subseteq X$ then $q(x^\prime,A)<1$ implies
$q(x,A)<1$; as a consequence,
if $x \rightleftharpoons x^\prime$ and $y \rightleftharpoons y^\prime$ then
$q(x,A)<1$ if and only if $q(x^\prime,A)<1$ and
$q(x,y)=q(x,y^\prime)$.
In the irreducible case $q(x,A)<1$  for some $x \in X$ if and only if
$q(w,A)<1$  for all $w \in X$; in particular
$\bar q(x)<1$ for some $x \in X$ if and only if $\bar q(w)<1$ for all $w \in X$.
Moreover $q(x,A)<1$ for some $x \in X$ and a finite $A \subseteq X$ if and only if
$q(w,B)<1$ for all $w \in X$ and all finite $B \subseteq X$. Indeed, in the irreducible case,
one can prove that $q(x,A)=q(x,x)$ for all $x \in X$ and every finite $A \subseteq X$:
since surviving in a finite subset $A$ is equivalent to surviving in at least
one of its points, then it is enough to prove it in the case $A:=\{y\}$ for $y \in X$;
in this case the conclusion follows from a Borel-Cantelli argument.

In the irreducible case, if $\rho_x(0)>0$ for all $x \in X$, we have that $\bar q(x)=q(x,A)$ for
some $x \in X$ and a finite subset $A \subseteq X$ if and only if $\bar q(y)=q(y,B)$ for
all $y \in X$ and all finite subsets $B \subseteq X$.
Indeed, if $\bar q(x)=1$ then $q(y,B)=1$ for all $y \in X$ and $B \subseteq X$ and there is nothing to prove.
Suppose that $\bar q(x)=q(x,A)<1$ and $\bar q(y)<q(y,B)$ for some $x,y \in X$ and $A,B \subseteq X$
finite. By irreducibility $q(x,A)=q(x,x)=q(x,B)$ hence we can assume that $A=B$.
We know that there is a positive probability that the process, starting from $x$
has at least one descendant at $y$. There is also a positive probability that all the particles (except one at $y$)
die out and the progeny of the surviving particle survives globally but not locally at $A$. Thus,
there is a positive probability, starting from $x$, of surviving globally but not locally at $A$ and this is a contradiction.
Observe that if we drop the assumption $\rho_x(0)>0$ for all $x \in X$, we might actually
have $\bar q(x)=q(x,A)<1$ and $\bar q(y)<q(y,A)$ for some $x,y \in X$ and a finite $A \subseteq X$
(see Example~\ref{exm:irreduciblenotstrongeverywhere}).

\begin{rem}\label{rem:strongconditioned}
We observe that the following assertions are equivalent for every nonempty subset $A \subseteq X$.
\begin{enumerate}[(1)]
\item  $q(x, A) = \bar q(x)$, for all $x \in X$;
\item $q_0(x,A) \le \bar q(x)$, for all $x \in X$;
\item  the probability of visiting $A$ at least once starting from $x$ is larger than the probability of
global survival starting from $x$, for all $x \in X$:
\item for all $x \in X$, either $\bar q(x)=1$ or
the probability of visiting $A$ at least once starting from $x$ conditioned on global survival starting from $x$ is $1$
(strong local survival in $A$ starting from $x$);
\item  for all $x \in X$, either $\bar q(x)=1$ or
the probability of local survival in $A$ starting from $x$ conditioned on global survival starting from $x$ is $1$.
\end{enumerate}

Indeed,
since $\{q_n(\cdot,A)\}_{n \in\N}$ is non decreasing, $q_{n}(\cdot,A)=G(q_{n-1}(\cdot, A))$ and
$\bar q$ is the smallest fixed point of $G$, we have immediately that
\begin{equation}\label{eq:strongconditioned}
q(\cdot, A) = \bar q(\cdot) \Longleftrightarrow q_0(\cdot,A) \le \bar q(\cdot),
\end{equation}
that is, (1)$\Longleftrightarrow$(2).
Moreover
the event ``local survival in $A$ starting from $x$'' implies both ``global survival starting from $x$''
and ``visiting $A$ at least once starting from $x$'', hence
$q(x, A) = \bar q(x)$ if and only if
the probability of visiting $A$ infinitely many times starting from $x$ conditioned on global survival
is $1$ and (1)$\Longleftrightarrow$(5)$\Longrightarrow$(4). Trivially (2)$\Longleftrightarrow$(3)
and (4)$\Longrightarrow$(3). This proves the equivalence.

Hence if there exists $x \in X$ such that $q(x,A)>\bar q(x)$ (that is, there is a positive probability of global survival and
nonlocal survival in $A$ starting from $x$) then there exists $y \in X$ such that
$q_0(y,A)>\bar q(y)$ (that is, there is a positive probability that the colony survives globally starting from $y$ without
ever visiting $A$). Of course, $q_0(x,A)>\bar q(x)$ implies $q(x,A)>\bar q(x)$ but the converse is not true.
In particular for a BRW with no death there is strong local survival in $A$ starting from $x$ for all $x \in X$ if and only if
the probability of visiting $A$ is $1$ starting from every vertex.
This is the BRW counterpart of an analogous result in random walk theory; a vertex $x$
is transient
if and only if there exists a vertex $y$ such that with positive probability the walker never visits $x$ starting
 from $y$.

We note that, \textit{a priori}, there is not an order relation between the events
``visiting $A$ at least once starting from $x$'' and ``global survival starting from $x$''. Nevertheless
if, for all $x \in X$,  the probability of ``visiting $A$ at least once starting from $x$'' is larger or equal to
the probability of ``global survival starting from $x$'' then, using equation~\eqref{eq:strongconditioned},
we have that the probability of ``global survival starting from $x$ never visiting $A$'' is $0$.
\end{rem}
An application of the previous remark to the construction of a BRW which survives globally and
locally, but not strong locally, is given in Example~\ref{rem:nonstrongandstrong}.

\begin{rem}\label{rem:q0}
 We observe that if $d(x,A):=\min\{n \in \N, y \in A \colon m_{xy}^{(n)}>0\}$ then
 $\mathbf{q}_n(x,A)=\mathbf{q}_0(x,A)$ for all $x$ such that $d(x,A) \ge n$. Hence,
 $\mathbf{q}_1(x,A)=\mathbf{q}_0(x,A)$ for all $x \not \in A$ and according to equation~\eqref{eq:extprobab}
 we have $\mathbf{q}_0(x,A)=G(\mathbf{q}_0;x)$ for all $x \not \in A$.
\end{rem}

If $G$ has only one fixed point $z < \mathbf{1}$ then
$q(\cdot, y)=\mathbf{1}$ or $q(\cdot, y)=\bar q(\cdot)$.
More precisely, if one can prove that $q(\cdot, y)< \mathbf{1}$ then $q(\cdot,y)=q(\cdot, A)=\bar q(\cdot)$ for all $A \ni y$.
In this case, global survival starting from $x$ (i.e.~$\bar q(x)<1$) is equivalent to local survival at $y$ starting from $x$ and
it implies strong local survival at $y$ starting from $x$.
If for some $y \in X$ we have $q(\cdot,y)= \bar q$ then the global survival starting from $x$ implies
the strong local survival at $y$ starting from $x$.

If equation~\eqref{eq:particular1} holds and $\rho(n)=\frac{1}{1+\bar \rho_x} (\frac{\bar \rho_x}{1+\bar \rho_x} )^n$,
we have that the survival probability in $A$, $v_A:=\mathbf{1}-q(\cdot, A)$, satisfies the equality
$M v_A=v_A/(\mathbf{1}-v_A)$ (see equation~\eqref{eq:Gcontinuous}).
In particular in the continuous-time case we have $\lambda K v_A=v_A/(\mathbf{1}-v_A)$.

\begin{exmp}
\label{ex:branchingprocess}
In the case of a Galton--Watson branching process, the generating function is
$G(z)=\sum_{n \in \N} \mu(n) z^n$ and its smallest fixed point $\bar q \in [0,1]$
satisfies $\bar q<1$ if and only if $1< \frac{\diff}{\diff z}G(z)|_{z=1}=\sum_{n \in \N} n \mu(n)= m_{xx}$
(cfr.~Assumption~\ref{assump:1} and Remark~\ref{rem:assump1}).

Moreover denote by $H(t)=\sum_{i=1}^\infty r_i t^i$ the generating function
of the total number of descendants, that is, $r_i$ is the probability 
that the total number of descendants (including the original particle) is $i$ for all $i \in \N$. Note that clearly $r_0=0$. 
It is easy to show that $H(t)=t \cdot G(H(t))$. Clearly $G(1)=1$ and 
$H(1) \leq 1$ (since there might be a positive probability of an infinite progeny, for instance
if $G^\prime(1)>1$. More precisely, $H(1)=\sum_{i \in \N} r_i=\bar q$, hence it is the smallest fixed point of $G$.
The expected number of descendants conditioned on extinction (if $\bar q \not = 0$) is $H^\prime(1)/\bar q$ 
where $H^\prime(1)$ satisfies
$H^\prime(1)=G(H(1))+G^\prime(H(1)) H^\prime(1)=G(\bar q)+ G^\prime(\bar q) H^\prime(1)$. 
Note that, under Assumption~\ref{assump:1}, $G^\prime(\bar q)=1$ if and only if $G^\prime(1)=1$ and this
implies that $\bar q=1$.
Hence, 
\[
H^\prime(1)=
\begin{cases}
1/(1-G^\prime(\bar q)) & \text{if } G^\prime(1) \not = 1 \\
+\infty & \text{se } G^\prime(1)=1.
\end{cases}
\]
In particular, $H^\prime(1)$ is finite if and only if $G^\prime(1) \not = 1$.

If we are dealing with a continuous-time branching process with reproduction rate
$\lambda k_{xx}$ then $G(z)=1/(1+\lambda k_{xx}(1-z))$ and $\bar q= \min(1,1/\lambda k_{xx})$.
We see that, according to the general case, $\bar q <1$ if and only if $\lambda k_{xx}>1$.
\end{exmp}

\subsection{Local survival}
\label{subsec:local}

The fact that there is local survival or not, depends only on the first-moment matrix $M$.
Indeed we have the following characterization which contains \cite[Theorem 4.1]{cf:Z1}
(some hints about the proof can be found in Section~\ref{sec:proofs}). We note that the following
result still holds without the hypothesis $\sup_{x \in X} \sum_{y \in X} m_{xy}<+\infty$.

\begin{teo}\label{th:equiv1local}
Let $(X,\mu)$ be a BRW.
\begin{enumerate}[(1)]
 \item
There is
local survival starting from $x$ if and only if $M_s(x,x) 
>1$.
\item
If $\sup_{w \in X: x \to w \to y} M_s(w,w) 
>1$ then
there is local survival at $y$ starting from $x$. Moreover if the cardinality of
${w \in X: x \to w \to y}$ is finite (for instance if $X$ is finite) the converse is true.
\end{enumerate}
\end{teo}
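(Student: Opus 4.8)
The plan is to prove part~(1) by passing to an embedded Galton--Watson process, and then to deduce part~(2) from part~(1) together with the monotonicity of $q(\cdot,\cdot)$ along $\to$, isolating the critical regime as the only genuine difficulty. For part~(1), starting from a single particle at $x$ I would call a descendant a \emph{first-return particle} if its reproduction trail (Section~\ref{subsec:trails}) returns to $x$ for the first time since leaving the ancestor. By the independence inherent in the trail construction, distinct $x$-particles generate i.i.d.\ families of first-return particles, so the totality of visits to $x$ is the genealogy of an ordinary Galton--Watson process whose mean offspring number is exactly $\Phi(x,x|1)=\sum_{n\ge 1}\varphi^{(n)}_{xx}$. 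Since each generation of the BRW is a.s.\ finite, the total progeny of this embedded process is infinite precisely when $x$ is occupied at arbitrarily large times; hence local survival at $x$ starting from $x$ is equivalent to nonextinction of the embedded process. By classical Galton--Watson theory, using Assumption~\ref{assump:1} and Remark~\ref{rem:assump1} to guarantee nonsingularity (so that the critical case is a.s.\ extinct, while infinite-mean offspring still give nonextinction wpp), the embedded process survives iff $\Phi(x,x|1)>1$. The identity $\Phi(x,x|1)\le 1\Longleftrightarrow M_s(x,x)\le 1$ from Section~\ref{subsec:genfun} then converts this into $M_s(x,x)>1$.

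For the sufficiency in part~(2), suppose some $w$ with $x\to w\to y$ satisfies $M_s(w,w)>1$. Part~(1) yields $q(w,w)<1$, and since $x\to w$ and $w\to y$ the observation recorded after Definition~\ref{def:survival} (if $a\to b$ and $q(a,a)<1$, then $q(c,b)<1$ for every $c\to a$), applied with $a=w$, $b=y$, $c=x$, gives $q(x,y)<1$, i.e.\ local survival at $y$ starting from $x$.

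For the necessity (under $|W|<\infty$, where $W:=\{w:\ x\to w\to y\}$) I would first confine the process to $W$. Every reproduction trail reaching $(y,n)$ from the initial particle at $x$ has all of its path-vertices $v$ satisfying $x\to v\to y$, hence lying in $W$; likewise every $\rightleftharpoons$-class of a vertex of $W$ is contained in $W$. Therefore killing all particles outside $W$ alters neither the visits to $y$ starting from $x$ nor the numbers $M_s(w,w)$ for $w\in W$, so I may assume $X=W$ is finite. The BRW is then a finite-type branching process with mean matrix $M$, and the statement reduces to the claim that \emph{if every communicating class of $M$ has Perron--Frobenius eigenvalue $\le 1$ (equivalently $M_s(w,w)\le 1$ for all $w$), then the total number of visits to $y$ is a.s.\ finite, so that $q(x,y)=1$}; its contrapositive is exactly the desired converse.

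The hard part is this finite claim in the \emph{critical} regime $\max_w M_s(w,w)=1$. A first-moment argument is useless there: the first-passage decomposition $\Gamma(x,y|1)=\Phi(x,y|1)\,\Gamma(y,y|1)$ forces $\Gamma(x,y|1)=\sum_n m^{(n)}_{xy}=\infty$ as soon as $M_s(y,y)=1$, so the expected number of visits to $y$ is infinite although the visits themselves are a.s.\ finite. Instead I would argue by a.s.\ finiteness of total progeny, inducting on the number of communicating classes ordered compatibly with $\to$: a source class is an irreducible, nonsingular, (sub)critical multitype process, hence a.s.\ extinct with a.s.\ finite total progeny, so it emits only a.s.\ finitely many particles into the remaining classes; each such particle launches an independent copy of the process on a strictly smaller class set, a.s.\ finite by the inductive hypothesis, and a finite sum of a.s.-finite quantities is a.s.\ finite. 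Propagating a.s.\ finiteness through a chain of critical classes---rather than controlling any moment---is the crux, and nonsingularity (Assumption~\ref{assump:1}, which is inherited by each class contained in $W$) is precisely what excludes the degenerate mean-one class that would otherwise survive forever.
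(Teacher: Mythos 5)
Your proposal is correct and follows essentially the same route as the paper: part (1) via the first-return decomposition and the embedded Galton--Watson process with mean $\Phi(x,x|1)$ (which is exactly the mechanism behind the citation to \cite[Theorem 4.1]{cf:Z1} and the identity $\Phi(x,x|1)\le 1\Leftrightarrow M_s(x,x)\le 1$ recorded in Section~\ref{subsec:genfun}), sufficiency in (2) from (1) plus monotonicity of $q$ along $\to$, and the converse by restricting to the finite set $\{w:x\to w\to y\}$ and propagating a.s.\ extinction class by class. Your write-up merely fills in the details that the paper's one-paragraph sketch leaves implicit (notably the nonsingularity role of Assumption~\ref{assump:1} in the critical case and the induction over communicating classes).
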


We recall that a useful characterization $M_s(x,x)=
\left ( \max \{\lambda \in \mathbb{R}: \Phi(x,x|\lambda) \le 1\}\right )^{-1}$ was
given in Section~\ref{subsec:genfun}. 
Clearly, since $t \mapsto\Phi(x,x|t)$ is left continuous and strictly increasing,
$M_s(x,x)>1$ if and only if
$\Phi(x,x|1)>1$, which is another condition equivalent to local survival at $x$.
Note that $M_s(x,x)$ depends only on the values  $\{m_{wy}\}_{w,y \rightleftharpoons x}$.
Thus the BRW survives locally at $x$ if and only if it does so when restricted to the irreducibility
class of $x$.
If the BRW is irreducible then
$M_s(x,x) 
=M_s(w,y)$ 
for all $x,w,y \in X$; hence there is local survival at $y$ starting from $x$ if and only if
$M_s(w,w) 
>1$ for some $w \in X$ (equivalently, for all $w \in X$).

To compare with random walk theory,
the reader might recall the definition of spectral radius of a random walk on $X$ with transition matrix $P$
as $1/\limsup_{n\to\infty}\sqrt[n]{p^{(n)}{(x,x)}}$
(see the discussion after Proposition~\ref{pro:F-BRW1} and \cite[Section 2.C]{cf:Woess09}).
If $\limsup_{n\to\infty}\sqrt[n]{p^{(n)}{(x,x)}}<1$
then the random walk is
transient, i.e.~it returns to $x$ a finite number of times almost surely
(where $p^{(n)}(x,x)$ are the $n$-step return probabilities); on
the other hand if $\limsup_{n\to\infty}\sqrt[n]{p^{(n)}{(x,x)}}=1$ then the process
may be either recurrent or transient. We observe that, while for a random walk
the probability of returning to a site infinitely many times obeys a 0-1 law,
a BRW can survive locally with any probability in $[0,1]$. Of course,
survival with probability one is possible only in the no-death case.

We show that the sufficient condition $\sup_{w \in X: x \to w \to y} M_s(w,w)
>1$
stated in Theorem~\ref{th:equiv1local} is not necessary in the reducible case when the cardinality of $X$
is infinite.

\begin{exmp}\label{exm:reducible}
 Let $X:=\N \times \{0,1\}$ (see Figure~\ref{fig:stripn}), fix $p>1/2$
 and consider the BRW with the following reproduction rules:
\begin{enumerate}[(a)]
 \item every particle at $(i,0)$ has 2 children at $(i+1,0)$ and 1 child at $(i,1)$ with probability $p$
and no children with probability $1-p$ (for all $i \in \N$);
\item every particle at $(i,1)$ has 2 children at $(i-1,1)$ with probability $p$
and no children with probability $1-p$ (for all $i \ge 1$);
\item every particle at $(0,1)$ has 1 child  at $(0,1)$ with probability $p$
and no children with probability $1-p$.
\end{enumerate}
Clearly $\limsup_{n\to\infty}\sqrt[n]{m^{(n)}_{ww}}=0$ for all $x \in X$. Nevertheless,
there is local survival at $(i,1)$ for all $i \in \N$ starting from $(j,0)$ for any fixed $j \in \N$
(note that there is no local survival at any $(i,0)$ for every starting point).
\end{exmp}

\begin{figure}
    \begin{center}
    \subfigure[\tiny The graph $\N \times \{0,1\}$ of Example~\ref{exm:reducible}.]{\includegraphics[height=3cm]{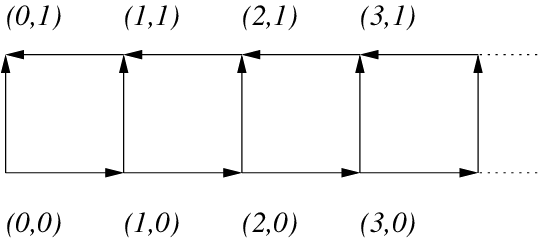}
    \label{fig:stripn}}
       \hspace{0.5cm}
     \subfigure[\tiny The graph of Example~\ref{exm:nonamenable} (the circles are loops).]{\includegraphics[height=4.5cm]{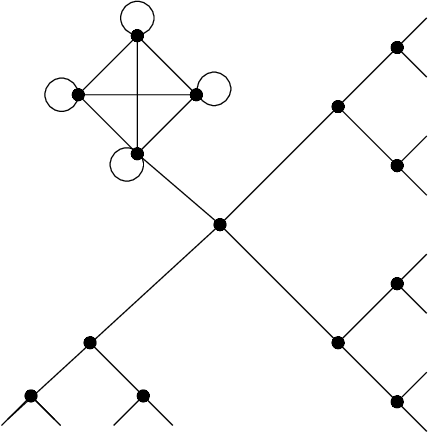}
    \label{fig:t3complete}}
    \end{center}
    \caption{}
\end{figure}


It is worth noting that if $[x]$, the irreducible
class of $x \in X$, is finite, then $M_s(x,x)$
is the Perron-Frobenius eigenvalue
of the submatrix $M^\prime:=(m_{yz})_{y,z \in [x]}$. In this case
there is local survival at $x$ if and only if $\max\{t >0 :
\exists v \not = \mathbf{0}, \, M^\prime v =t v\}>1$.

In the case of continuous-time BRWs with
rates $\{\lambda k_{xy}\}_{x,y \in X}$ one is also interested in the characterization
of the local critical parameter $\lambda_s(x)$. Moreover one may also wonder
whether at $\lambda= \lambda_s(x)$ there is survival or extinction.
We already observed that the behavior of the continuous-time BRW is equivalent  to
the behavior of
its discrete-time counterpart (that is, the BRW with independent diffusion
where $\{\mu_x\}_{x \in X}$ is
given by equations \eqref{eq:particular1} and \eqref{eq:counterpart}).
If we apply Theorem~\ref{th:equiv1local} then
we obtain the following corollary, which gives the strong critical value and states
that at the local critical value there is local extinction a.s.

\begin{cor}\textbf{\cite[Theorems 4.1 and 4.7]{cf:BZ2}}\label{cor:pemantleimproved}
Given a continuous-time BRW $(X,K)$, $\lambda_s(x)=1/K_s(x,x)=1/\limsup_{n\to\infty}\sqrt[n]{k^{(n)}_{xx}}$.
If $\lambda=\lambda_s(x)$ then there is local extinction at $x$.
\end{cor}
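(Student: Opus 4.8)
The plan is to reduce the whole statement to Theorem~\ref{th:equiv1local}(1) by exploiting the identity $M=\lambda K$ for the discrete-time counterpart. First I would record the elementary algebraic fact that, since the first-moment matrix of the continuous-time BRW is $M=\lambda K$, taking matrix powers gives $m^{(n)}_{xx}=\lambda^n k^{(n)}_{xx}$, and therefore
\[
M_s(x,x)=\limsup_{n\to\infty}\sqrt[n]{m^{(n)}_{xx}}=\lambda\,\limsup_{n\to\infty}\sqrt[n]{k^{(n)}_{xx}}=\lambda\,K_s(x,x).
\]
I would also note that, under the running hypothesis $\sup_x\sum_y m_{xy}<\infty$ (equivalently $\sup_x k(x)<\infty$), one has $k^{(n)}_{xx}\le(\sup_x k(x))^n$, so $K_s(x,x)$ is finite; the only degenerate case to keep in mind is then $K_s(x,x)=0$, to be handled with the convention $1/0=+\infty$.

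Next I would invoke the equivalence between the continuous-time BRW and its discrete-time counterpart established in Section~\ref{subsec:continuous}: the two processes have almost surely the same local behaviour, so local survival at $x$ for $(X,K)$ holds if and only if it holds for the discrete-time counterpart, whose first-moment matrix is exactly $M=\lambda K$. Applying Theorem~\ref{th:equiv1local}(1) to this counterpart, local survival starting from $x$ occurs precisely when $M_s(x,x)>1$, that is, when $\lambda K_s(x,x)>1$, i.e.\ $\lambda>1/K_s(x,x)$. Taking the infimum over the $\lambda>0$ for which local survival occurs then yields
\[
\lambda_s(x)=\inf\{\lambda>0:\lambda K_s(x,x)>1\}=\frac{1}{K_s(x,x)},
\]
which is the first assertion (with $\lambda_s(x)=+\infty$ in the degenerate case $K_s(x,x)=0$).

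For the behaviour at criticality I would simply read off the \emph{strict} inequality in Theorem~\ref{th:equiv1local}(1). At $\lambda=\lambda_s(x)=1/K_s(x,x)$ we have $M_s(x,x)=\lambda K_s(x,x)=1$, which is not $>1$; hence the theorem furnishes no local survival starting from $x$, i.e.\ $q(x,x)=1$, which is precisely local extinction at $x$.

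The point I want to stress is that there is no separate delicate analysis at the critical value: the entire content of the ``extinction at $\lambda_s$'' claim is already built into the fact that Theorem~\ref{th:equiv1local}(1) is a strict-inequality characterization, so $\lambda_s(x)$ sits in the extinction regime by construction. The only genuine care needed is the bookkeeping of degenerate cases (namely $K_s(x,x)=0$ and the boundary convention making the infimum well defined) together with the appeal to the continuous/discrete equivalence, which is what makes Theorem~\ref{th:equiv1local} legitimately applicable to $(X,K)$ in the first place.
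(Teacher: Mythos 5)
Your proposal is correct and follows exactly the route the paper takes: it reduces the corollary to Theorem~\ref{th:equiv1local}(1) applied to the discrete-time counterpart via the identity $M=\lambda K$, so that $M_s(x,x)=\lambda K_s(x,x)$ and the strict-inequality characterization simultaneously identifies $\lambda_s(x)=1/K_s(x,x)$ and forces local extinction at the critical value. Nothing is missing.
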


It is worth mentioning that, for a edge-breeding, irreducible BRW the critical value
$\lambda_s$ (not depending on $x$) was already identified in \cite[Lemma 3.1]{cf:PemStac1}, even though
the critical behavior was not known.

Another result about local survival, in the context of irreducible BRWs with no death, is the following which relies on
the existence of positive superharmonic functions; this one has
a well-known counterpart in the random walk theory (see \cite[Theorem 6.21]{cf:Woess09}).

\begin{teo}\textbf{\cite[Theorem 2.1]{cf:M08}}
\label{th:Muller08}
Let $(X,\mu)$ be an irreducible BRW such that $\rho_x(0)=0$ for all $x \in X$. There is local extinction if and only if
there exists a strictly positive function $f$ on $X$ such that $Mf \le f$.
\end{teo}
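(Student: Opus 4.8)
The plan is to translate the statement into a condition on the spectral radius of $M$ and then construct the superharmonic function from Green functions. Since the BRW is irreducible, $M_s(x,x)$ takes a common value $\alpha:=M_s(o,o)$ (fix a reference vertex $o\in X$), and by Theorem~\ref{th:equiv1local} there is local extinction if and only if $\alpha\le 1$. Thus it suffices to prove that $\alpha\le 1$ holds if and only if there is a strictly positive $f$ with $Mf\le f$. Recall from Section~\ref{subsec:genfun} that $R:=1/\alpha$ is the radius of convergence of each Green function $\Gamma(x,y|\lambda)=\sum_n m^{(n)}_{xy}\lambda^n$, which is finite precisely for $\lambda<R$.

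For the implication from the superharmonic function to extinction, suppose $f>0$ satisfies $Mf\le f$. Since $M$ has nonnegative entries it is monotone on nonnegative functions, so iterating gives $M^nf\le f$ for every $n$. Keeping only the diagonal term, $m^{(n)}_{xx}f(x)\le (M^nf)(x)\le f(x)$, whence $m^{(n)}_{xx}\le 1$ for all $n$ (using $f(x)>0$). Taking $n$-th roots and $\limsup$ yields $\alpha=M_s(x,x)\le 1$, i.e.\ local extinction by Theorem~\ref{th:equiv1local}. This direction is routine.

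For the converse, assume $\alpha\le 1$, so $R\ge1$. If $\alpha<1$ then $\Gamma(x,o|1)=\sum_n m^{(n)}_{xo}$ converges; set $f:=\Gamma(\cdot,o|1)$. A direct computation gives $Mf(x)=\sum_n m^{(n+1)}_{xo}=f(x)-\delta_{xo}$, so $Mf=f-\delta_o\le f$, and $f>0$ by irreducibility (each $x$ reaches $o$). Thus $f$ is the desired function, and this subcase is elementary.

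The delicate case is $\alpha=1$ (so $R=1$), where $\Gamma(\cdot,o|1)$ may diverge and one must pass to a limit; this is where I expect the main obstacle. I would take $\lambda_k\uparrow1$ from below and consider the normalized Green functions $f_k:=\Gamma(\cdot,o|\lambda_k)/\Gamma(o,o|\lambda_k)$, which satisfy $f_k(o)=1$ and, by the computation above, $Mf_k\le\lambda_k^{-1}f_k$. The crucial input is a Harnack-type bound: for each $x$ pick $N$ with $m^{(N)}_{ox}>0$; then $m^{(N+n)}_{oo}\ge m^{(N)}_{ox}m^{(n)}_{xo}$ gives $\Gamma(o,o|\lambda)\ge m^{(N)}_{ox}\lambda^N\Gamma(x,o|\lambda)$, hence $f_k(x)\le (m^{(N)}_{ox}\lambda_k^N)^{-1}$, a bound uniform in $k$. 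Since $X$ is countable, a diagonal extraction then produces a pointwise limit $f$ with $f(o)=1$, and Fatou's lemma applied to $Mf_k\le\lambda_k^{-1}f_k$ yields $Mf(x)\le\liminf_k\lambda_k^{-1}f_k(x)=f(x)$ (as $\lambda_k^{-1}\downarrow1$). Finally $f$ is strictly positive: if $f(x_0)=0$ then $\sum_y m_{x_0y}f(y)=(Mf)(x_0)\le0$ forces $f(y)=0$ whenever $m_{x_0y}>0$, so the zero set is closed under following edges; by irreducibility it would then be all of $X$, contradicting $f(o)=1$. Securing this nontrivial positive limit — via the uniform Harnack bound and the countability of $X$ for the diagonal argument — is exactly the heart of the proof.
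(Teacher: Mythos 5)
Your proof is correct, but it follows a different route from the one the paper points to. The paper does not prove Theorem~\ref{th:Muller08} itself: it cites M\"uller and states that the proof is ``inspired by the proof of Theorem~\ref{th:MenshikovVolkov}'', i.e.\ a probabilistic argument in which the superharmonic function is read off from never-hitting probabilities of the process and the converse is obtained from a supermartingale of the form $Q_n=\prod_x v(x)^{\eta_n(x)}$. You instead work entirely at the level of the first-moment matrix: you reduce the claim, via Theorem~\ref{th:equiv1local}, to the equivalence $M_s\le 1\Leftrightarrow\exists f>0,\ Mf\le f$, and you prove that equivalence with Green functions, the Harnack-type bound $\Gamma(o,o|\lambda)\ge m^{(N)}_{ox}\lambda^N\Gamma(x,o|\lambda)$, diagonal extraction and Fatou. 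This is essentially the Vere-Jones/Woess variational characterization $M_s=\min\{t>0:\exists f>0,\ Mf\le tf\}$, which the paper itself quotes immediately after the theorem --- though it uses it in the opposite logical direction (to deduce Theorem~\ref{th:equiv1local} from Theorem~\ref{th:Muller08}), so your argument is not circular provided Theorem~\ref{th:equiv1local} is taken as established independently, as it is in \cite[Theorem 4.1]{cf:Z1}. Two remarks on what each approach buys: your analytic route never uses the no-death hypothesis $\rho_x(0)=0$ (it is part of M\"uller's framework rather than essential to this equivalence), and all steps --- the case split on $M_s<1$ versus $M_s=1$, the normalization $f_k=\Gamma(\cdot,o|\lambda_k)/\Gamma(o,o|\lambda_k)$, the uniform bound, and the propagation-of-zeros argument for strict positivity --- check out; the probabilistic route, on the other hand, is the one that generalizes to the genuinely nonlinear statement of Theorem~\ref{th:MenshikovVolkov}, where the mean matrix alone no longer suffices.
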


The proof of the previous theorem is inspired by the proof of
Theorem~\ref{th:MenshikovVolkov}.
Note that in the irreducible case with no death, one can obtain  Theorem~\ref{th:equiv1local}
directly from Theorem~\ref{th:Muller08}
Indeed, see \cite[Section 7.A]{cf:Woess},
$ M_s 
= \min \{t>0: \exists f:X \rightarrow (0,+\infty), \, Mf \le tf \}$
(remember that in the irreducible case $M_s(x,y)$ does not depend on $x,y \in X$).

\subsection{Global survival}
\label{subsec:global}

The classical approach to estimate the probability of extinction of a branching process
uses the fact that this probability is the minimal fixed point in $[0,1]$ of the generating
function of the law of the number of children. Theorem~\ref{th:equiv1global} extends this approach to
BRWs; indeed
in this case the vector of global extinction probability $\bar q \in [0,1]^X$ is
the smallest fixed point of the infinite dimensional generating function $G$
(see Section~\ref{subsec:survivalprob}). Global survival is equivalent to the existence
of a fixed point of $G$ strictly smaller than $\mathbf{1}$.
We remark here that given $v,w \in [0,1]^X$
by $v < w$ we mean, as usual, that $v \le w$ and $v \not = w$, that is
$v(x) \le w(x)$ for all $x \in X$ and $v(x_0)<w(x_0)$ for some $x_0 \in X$.
Theorem~\ref{th:equiv1global} gives an equivalent condition for global survival and a necessary one.

\begin{teo}\textbf{\cite[Theorem 4.1]{cf:Z1}}\label{th:equiv1global}
Let $(X,\mu)$ be a discrete-time BRW.
\begin{enumerate}[(1)]
\item There is
global survival starting from $x$ if and only if there exists $z\in [0,1]^X$, $z(x)<1$,
such that $G(z|y)\le z(y)$, for all $y \in X$
(equivalently, such that $G(z|y)= z(y)$, for all $y \in X$).
\item
If there is global survival starting from $x$, then
there exists $v\in[0,1]^X$, $v(x)>0$, such that
 $Mv \ge v$. Moreover, for all $y$, $Mv(y)=v(y)$ if and only if   
$G(\mathbf 1 -(1-t) v|y)=1-(1-t)v(y), \forall t \in [0,1]$.
\end{enumerate}
\end{teo}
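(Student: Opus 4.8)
The plan is to prove the two parts of Theorem~\ref{th:equiv1global} by exploiting the structure of the extinction probabilities as fixed points of $G$ and the smoothness of $G$ near $\mathbf 1$.

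For part (1), I would argue as follows. If there is global survival starting from $x$, then $\bar q(x)<1$, and since $\bar q=\lim_{n\to\infty}G^{(n)}(\mathbf 0)$ is itself a fixed point of $G$ (recalled in Section~\ref{subsec:survivalprob}), I can simply take $z=\bar q$: this gives $z(x)<1$ with $G(z)=z$, hence $G(z|y)\le z(y)$ for all $y$. Conversely, suppose there exists $z\in[0,1]^X$ with $z(x)<1$ and $G(z|y)\le z(y)$ for all $y$. The key monotonicity fact is that $G$ is nondecreasing for the pointwise partial order of $[0,1]^X$ (stated after equation~\eqref{eq:genfun}). Starting from $q_0(\cdot,X)=\mathbf 0\le z$ and applying $G$ repeatedly, together with $G(z)\le z$, an induction shows $G^{(n)}(\mathbf 0)\le z$ for all $n$, so $\bar q=\lim_n G^{(n)}(\mathbf 0)\le z$. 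Therefore $\bar q(x)\le z(x)<1$, which is exactly global survival starting from $x$. The equivalence with the equality version follows because any solution of the inequality dominates $\bar q$, which solves the equation.

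For part (2), the natural approach is to linearize $G$ at $\mathbf 1$. Assume global survival, so by part~(1) there is a fixed point $z<\mathbf 1$ (at least $z(x)<1$); set $v:=\mathbf 1-z\ge\mathbf 0$ with $v(x)>0$. I want to produce a vector $v'$ with $Mv'\ge v'$ and $v'(x)>0$. The idea is that $M$ is the derivative of $G$ at $\mathbf 1$: indeed the excerpt records $\sum_w m_{xw}u(x)=\frac{\diff}{\diff t}G(\mathbf 1-(1-t)u|x)\big|_{t=1}$. Concretely I would consider the function $t\mapsto G(\mathbf 1-(1-t)v|x)$ on $[0,1]$; at $t=1$ its value is $G(\mathbf 1)=\mathbf 1$ and at $t=0$ its value is $G(\mathbf 1-v)=G(z)=z=\mathbf 1-v$, while its derivative at $t=1$ is $Mv(x)$. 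Convexity of each coordinate $t\mapsto G(\cdot|x)$ (as a power series with nonnegative coefficients in the rescaled variable) forces the chord from $t=0$ to $t=1$ to lie below the tangent at $t=1$, giving $1-(1-0)Mv(x)\le G(\mathbf 1-v|x)=1-v(x)$, i.e.\ $Mv(x)\ge v(x)$ for every $x$. This yields the desired inequality $Mv\ge v$ directly with $v=\mathbf 1-z$.

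The second sentence of part~(2) is the delicate point and I expect it to be the main obstacle. The equality case $Mv(y)=v(y)$ must be matched exactly with $G(\mathbf 1-(1-t)v|y)=1-(1-t)v(y)$ holding for \emph{all} $t\in[0,1]$. The mechanism is the strict convexity analysis: for each fixed $y$ the map $h_y(t):=G(\mathbf 1-(1-t)v|y)$ is convex with $h_y(1)=1$ and $h_y'(1)=Mv(y)$, and the tangent line at $t=1$ is $t\mapsto 1-(1-t)Mv(y)$. The chord–tangent inequality used above is an equality for a convex function precisely when the function is affine, i.e.\ coincides with its tangent on the whole interval; when $Mv(y)=v(y)$ the endpoint $h_y(0)=1-v(y)$ already lies on the tangent, forcing $h_y$ to be affine and equal to its tangent throughout $[0,1]$, which is exactly the claimed identity. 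The converse direction is immediate by differentiating that identity at $t=1$. The care required here is to justify the strict-versus-degenerate convexity dichotomy coordinatewise and to handle coordinates where $v(y)=0$; this is where I would spend most of the effort, invoking that $G(\cdot|y)$ is a genuine power series in $z$ so that affineness along the segment corresponds to $\mu_y$ being supported on $f$ with $\sum_w f(w)\le 1$ along the relevant directions.
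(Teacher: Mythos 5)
Your proof is correct and takes essentially the same route as the paper's: part (1) is the monotone fixed-point iteration showing $\bar q\le z$ for any sub-solution of $G(z)\le z$, and part (2) is precisely the convexity/tangent-line argument for $t\mapsto G(\mathbf 1-(1-t)v|y)$ at $t=1$ with $v=\mathbf 1-z$ for a fixed point $z$, which is the mechanism the paper records in the remark following the theorem (taking $v=\mathbf 1-\bar q$) and in Lemma~\ref{lem:convexity}. Your handling of the equality case (chord and tangent coinciding forces affineness) and of coordinates with $v(y)=0$ is sound, so no gap remains.
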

It is easy to show (see for instance \cite[Section 3]{cf:BZ2}) that the first condition
implies that $\bar q \le z < \mathbf{1}$, that is, $z$ is an upper bound for the
probabilities of global extinction.
As for the second condition, one has that if  $\bar q < \mathbf{1}$ then taking
 $v=\mathbf{1}-\bar q$ (the probabilities of global survival) one obtains the inequality $Mv \ge v$.
This is the analog for a BRW of the well-known result which states that a
branching process survives if and only if the expected number of children is strictly larger than one.
Nevertheless, for a BRW, even if we prove that $Mv(y)>v(y)$ for all $y \in X$
and some $v \not = \mathbf{0}$ this does not suffice for
global survival: a counterexample is given by Example~\ref{exm:noext}.

In particular cases we can characterize global survival in terms of $M$:
the following corollary follows easily from Theorem~\ref{th:equiv1global}(1) and
equation~\eqref{eq:Gcontinuous} by taking $z=\mathbf{1}-v$.

\begin{cor}\label{cor:forgotten}
 Suppose that the generating function of $(X,\mu)$ satisfies equation~\eqref{eq:Gcontinuous}
(for instance, if $(X,\mu)$ is a BRW with independent diffusion 
where
$\rho_x(n)=\frac{1}{1+\bar \rho_x} (\frac{\bar \rho_x}{1+\bar \rho_x} )^n$) then
there is global survival starting from $x \in X$ if and only if there exists
$v \in [0,1]^X$, $v(x)>0$ such that
\[
 Mv \ge v /(\mathbf{1}-v), \qquad \text{(equivalently, }Mv = v /(\mathbf{1}-v) \text{)}
\]
(where the ratio is taken coordinatewise).
\end{cor}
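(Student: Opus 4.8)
The plan is to deduce the statement directly from Theorem~\ref{th:equiv1global}(1) via the change of variable $z=\mathbf{1}-v$, which is a bijection between candidate vectors $z\in[0,1]^X$ and candidate vectors $v\in[0,1]^X$ and which turns the normalization $z(x)<1$ into exactly $v(x)>0$. Under this substitution I first compute the generating function using equation~\eqref{eq:Gcontinuous}: since $\mathbf{1}-z=v$, one gets $G(\mathbf{1}-v)=\mathbf{1}/(\mathbf{1}+Mv)$ coordinatewise, so the inequality $G(z|y)\le z(y)$ of Theorem~\ref{th:equiv1global}(1) reads, at each $y\in X$,
\[
\frac{1}{1+Mv(y)}\le 1-v(y).
\]

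Next I would carry out the elementary algebra. Observe first that $Mv(y)$ is finite (this is where the standing hypothesis $\sup_{x}\sum_{y} m_{xy}<+\infty$ enters, giving $Mv(y)\le\sup_{x}\sum_{y}m_{xy}<+\infty$), hence the left-hand side is strictly positive and any solution automatically satisfies $v(y)<1$ for every $y$; in particular the formally infinite value of $v(y)/(1-v(y))$ never arises. With $1-v(y)>0$ I may multiply through, expand $(1-v(y))(1+Mv(y))$, and reduce the inequality to $Mv(y)(1-v(y))\ge v(y)$, i.e., dividing by $1-v(y)>0$, to $Mv(y)\ge v(y)/(1-v(y))$. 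Repeating the same computation with equalities throughout converts $G(z|y)=z(y)$ into $Mv(y)=v(y)/(1-v(y))$, which yields the parenthetical ``equivalently'' clause.

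Collecting these equivalences and recalling $z(x)<1\iff v(x)>0$, the existence of $z\in[0,1]^X$ with $z(x)<1$ solving $G(z)\le z$ (resp.\ $G(z)=z$) is equivalent to the existence of $v\in[0,1]^X$ with $v(x)>0$ solving $Mv\ge v/(\mathbf{1}-v)$ (resp.\ with equality). By Theorem~\ref{th:equiv1global}(1) the former is equivalent to global survival starting from $x$, which is the assertion.

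There is essentially no hard step here, since the whole argument is a reversible algebraic rewriting of a condition already proved in Theorem~\ref{th:equiv1global}(1). The only point that deserves a word of care is the finiteness of $Mv(y)$ and the resulting automatic bound $v<\mathbf{1}$: this is precisely what makes the manipulation of the inequality reversible and what keeps the right-hand side $v/(\mathbf{1}-v)$ well defined, so I would state it explicitly rather than leave it implicit.
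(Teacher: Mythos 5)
Your proposal is correct and is exactly the paper's argument: the authors state only that the corollary ``follows easily from Theorem~\ref{th:equiv1global}(1) and equation~\eqref{eq:Gcontinuous} by taking $z=\mathbf{1}-v$,'' which is precisely the substitution and algebraic rewriting you carry out. Your explicit remarks on the finiteness of $Mv(y)$ and the automatic bound $v<\mathbf{1}$ are a useful elaboration of details the paper leaves implicit.
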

Observe that the solution $v$ in the previous corollary provides a lower bound
for the probabilities of global survival $\mathbf{1}-\bar q$;
moreover, among all the solutions of either equations, the largest one is $\mathbf{1}-\bar q$.

Now we notice that in order to characterize local survival we studied the behavior
of the expected number of $n$-th generation offsprings coming back to $x$, that is
$m^{(n)}_{xx}$, see Theorem~\ref{th:equiv1local}.
For the global survival problem, we are naturally lead to investigate the behavior of
the expected number of $n$-th generation offsprings whose ancestor is a single particle at $x$,
that is $\sum_y m^{(n)}_{xy}$.
First of all, observe that it is easy to show, by using supermultiplicative arguments,
that  
$M_w(x) \ge M_s(x,x)$. 
%
%
moreover, for an irreducible BRW,
$M_w(x)$ 
does not depend on $x \in X$.

A complete description of global survival in terms of $M_w(x)$ 
is not possible
in general (we have a necessary condition); indeed Example~\ref{exm:noext} shows that
global survival does not depend on $M$ alone.
Nevertheless, a characterization of global survival by means of $M_w(x)$
holds for the
class of $\mathcal F$-BRWs.

\begin{teo}\textbf{\cite[Theorems 4.1 and 4.3]{cf:Z1}}\label{th:equiv1global3}
Let $(X,\mu)$ be a discrete-time BRW.
\begin{enumerate}[(1)]
\item If there is global survival starting from $x$, then $M_w(x) 
\ge 1$.
\item If $(X,\mu)$ is an $\mathcal{F}$-BRW then there is global survival for $(X,\mu)$ starting from $x$ if and only if
$M_w(x) 
> 1$.
\end{enumerate}
\end{teo}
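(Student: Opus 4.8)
The plan is to handle the two parts by genuinely different routes: a soft first-moment bound for (1), and a reduction to a finite multitype branching process for (2).

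For part (1) I would argue by contraposition, exploiting that $T^n_x=\sum_{w\in X}m^{(n)}_{xw}$ is exactly the expected total population at time $n$ from a single particle at $x$, since the moment recurrence gives $\E^{\delta_x}(\sum_w\eta_n(w))=\sum_w m^{(n)}_{xw}=T^n_x$. Assume $M_w(x)=\liminf_n\sqrt[n]{T^n_x}<1$; then there are a constant $c<1$ and a subsequence $n_k\to\infty$ with $T^{n_k}_x\le c^{n_k}\to 0$. Markov's inequality applied to the nonnegative integer-valued random variable $\sum_w\eta_{n_k}(w)$ yields
\[
\pr^{\delta_x}\Big(\sum_w\eta_{n_k}(w)\ge 1\Big)\le \E^{\delta_x}\Big(\sum_w\eta_{n_k}(w)\Big)=T^{n_k}_x\le c^{n_k}.
\]
Since $\Omega_\infty=\{\sum_w\eta_n(w)>0,\ \forall n\}\subseteq\{\sum_w\eta_{n_k}(w)\ge 1\}$ for each $k$, letting $k\to\infty$ forces $\pr^{\delta_x}(\Omega_\infty)=0$, i.e.\ $\bar q(x)=1$. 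Hence global survival implies $M_w(x)\ge 1$.

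For part (2) the first step is to transport the problem to the finite model. As $(X,\mu)$ is an $\mathcal F$-BRW it is locally isomorphic to a BRW $(Y,\nu)$ with $Y$ finite via a surjection $g$, and global survival for $(X,\mu)$ from $x$ is equivalent to global survival for $(Y,\nu)$ from $g(x)$. Choosing $z=\mathbf 1_Y$ in the moment identity $\sum_{w\in X}m^{X,(n)}_{xw}z(g(w))=\sum_{y\in Y}m^{Y,(n)}_{g(x)y}z(y)$ gives $T^{X,n}_x=T^{Y,n}_{g(x)}$ for every $n$, and therefore $M_w^X(x)=M_w^Y(g(x))$. It thus suffices to prove the equivalence ``global survival from $y_0$ $\Leftrightarrow$ $M_w(y_0)>1$'' for the finite BRW $(Y,\nu)$. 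The second step is to recognise $(Y,\nu)$ as a multitype Galton--Watson process: since $Y$ is finite one has $S_Y=\N^Y$, so a particle of type $y$ produces $f(y')$ children of each type $y'$ with probability $\nu_y(f)$, which is precisely a $|Y|$-type branching process with mean matrix $M^Y$. Decomposing $Y$ into irreducibility classes, a short computation shows that for a finite nonnegative matrix $M_w(y_0)=\lim_n\sqrt[n]{T^{Y,n}_{y_0}}=\max\{\rho(M_C):\ y_0\to C\}$, the largest Perron--Frobenius eigenvalue of the submatrices $M_C:=(m^Y_{yz})_{y,z\in C}$ over classes $C$ reachable from $y_0$ (the polynomial Jordan corrections disappear under the $n$-th root). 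By the classical dichotomy for multitype branching processes (see \cite{cf:AthNey,cf:Harris63}) the progeny from $y_0$ survives with positive probability if and only if $y_0$ reaches a class $C$ with $\rho(M_C)>1$; here Assumption~\ref{assump:1} supplies the nonsingularity of each class that is needed to rule out survival of the critical classes $\rho(M_C)=1$. Combining these two facts gives global survival from $y_0$ iff $M_w(y_0)>1$, and pulling back through $g$ completes part (2).

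The main obstacle will be the converse direction of part (2), upgrading $M_w>1$ to genuine global survival. One cannot simply insert the Perron--Frobenius eigenvector $v$ (which satisfies $M^Yv\ge v$) into Theorem~\ref{th:equiv1global}(2), because that implication points the wrong way and its converse fails in infinite dimension (cf.\ Example~\ref{exm:noext}). What rescues the argument is the finiteness of $Y$: it collapses the $\mathcal F$-BRW to a bona fide multitype branching process, where the spectral criterion is available; and one must use Assumption~\ref{assump:1} explicitly, since without nonsingularity a critical class could be misdiagnosed.
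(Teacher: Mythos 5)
The paper does not actually prove this theorem; it is quoted from \cite[Theorems 4.1 and 4.3]{cf:Z1}, and the survey only records the ingredients (the identity $\sum_{w}m^{X,(n)}_{xw}z(g(w))=\sum_{y}m^{Y,(n)}_{g(x)y}z(y)$, the equivalence of global survival for $(X,\mu)$ and $(Y,\nu)$, and Remark~\ref{rem:finiteclasses}'s formula $M_w(x)=\max_{w:x\to w}M_s(w,w)$ for finite $X$). Your proof assembles exactly these pieces, so it follows the intended route: part (1) by Markov's inequality along a subsequence realizing the liminf is correct, and part (2) by projecting to the finite multitype Galton--Watson process $(Y,\nu)$ and invoking the class decomposition is the right reduction.

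The one soft spot is the sentence ``Assumption~\ref{assump:1} supplies the nonsingularity of each class'' of $Y$. Assumption~\ref{assump:1} is stated for $(X,\mu)$, and nonsingularity of the classes of $X$ does \emph{not} automatically transfer to the classes of $(Y,\nu)$: take $X=\Z$ with every particle at $n$ producing, deterministically, one child at $n+1$. Each $\rightleftharpoons$-class of $X$ is a singleton in which a particle has $0\neq 1$ children, so Assumption~\ref{assump:1} holds on $X$; yet the projected process on the singleton $Y$ has exactly one child per particle, $M_w=1$, and the process survives globally with probability one --- so the critical-class step of your argument (and indeed the literal statement of part (2)) fails unless nonsingularity is imposed on $(Y,\nu)$ itself. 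The clean fix is to read the blanket Assumption~\ref{assump:1} as applying to the finite BRW $(Y,\nu)$ as well (which is how the source \cite{cf:Z1} should be read); with that reading your argument is complete, but you should make the hypothesis you are actually using explicit rather than attributing it to the assumption on $X$.
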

We already observed that if the BRW is irreducible then $\bar q<\mathbf{1}$ implies $\bar q(x) <1$ for all
$x \in X$; Example~\ref{exm:reducible} shows that in the reducible case it might happen that
$\bar q< \mathbf{1}$ and $\bar q(x)=1$ for some $x \in X$.

In the case of continuous-time BRWs, one is interested in identifying $\lambda_w(x)$ and the
behavior of the process when $\lambda=\lambda_w(x)$.
The following result is a corollary of Theorems~\ref{th:equiv1global} and \ref{th:equiv1global3}.
It characterizes $\lambda_w(x)$ in the case of $\mathcal F$-BRWs and gives a lower bound
in the general case.

\begin{cor}\textbf{\cite[Theorems 4.3 and 4.8, Proposition 4.5]{cf:BZ2}}\label{cor:globalsurvivalcontinuous}
Consider a continuous-time BRW $(X,K)$.
\begin{enumerate}[(1)]
 \item
$\lambda_w(x) \ge 1/K_w(x)=1/\liminf_{n \in \N} \sqrt[n]{\sum_{y \in X} k^{(n)}_{xy}}$.
\item
If $(X,\mu)$ is an $\mathcal{F}$-BRW then $\lambda_w(x)=1/K_w(x)$. Moreover, if $\lambda=\lambda_w(x)$ then
there is global extinction starting from $x$.
\item
Suppose that for all $y \in X$ there exists $x \in X$ such that $x \to y$. If there is global survival starting from $x$, then
there exists $v\in[0,1]^X$, $v(x)>0$, such that $\lambda Kv > v$.
\end{enumerate}
\end{cor}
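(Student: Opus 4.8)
The plan is to reduce every assertion to the discrete-time counterpart of $(X,K)$ and to the two discrete-time results already in hand, exploiting the single algebraic identity $M=\lambda K$. Since the counterpart has first-moment matrix $M=\lambda K$, we have $m^{(n)}_{xy}=\lambda^n k^{(n)}_{xy}$ for every $n$, so taking $n$-th roots in the definitions \eqref{eq:generalgeomparam} and \eqref{eq:geomparam} yields the scaling relation $M_w(x)=\lambda K_w(x)$ (and likewise $M_s(x,y)=\lambda K_s(x,y)$). Because the continuous-time BRW and its discrete-time counterpart have the same global behaviour, ``global survival starting from $x$'' for $(X,K)$ at parameter $\lambda$ is, for survival purposes, the same event as global survival for the discrete-time BRW with matrix $\lambda K$. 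Everything then follows by translating the $M$-criteria into $\lambda K$-criteria.

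For part (1) I would feed this into Theorem~\ref{th:equiv1global3}(1): global survival starting from $x$ forces $M_w(x)\ge 1$, i.e.\ $\lambda K_w(x)\ge 1$, i.e.\ $\lambda\ge 1/K_w(x)$ (with the convention $1/0=+\infty$, consistent because $K_w(x)=0$ makes $M_w(x)=0<1$ for every $\lambda$ and so precludes survival outright). Hence the set of $\lambda$ admitting global survival is contained in $[1/K_w(x),+\infty)$, and taking the infimum gives $\lambda_w(x)\ge 1/K_w(x)$. For part (2) I first invoke the fact recorded in Section~\ref{subsec:FBRWs} that $(X,K)$ is an $\mathcal{F}$-BRW exactly when its discrete-time counterpart is; then Theorem~\ref{th:equiv1global3}(2) upgrades the previous implication to the equivalence ``global survival starting from $x$ $\iff M_w(x)>1 \iff \lambda>1/K_w(x)$''. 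Thus the survival set is precisely $(1/K_w(x),+\infty)$, which gives $\lambda_w(x)=1/K_w(x)$ and, since $\lambda=\lambda_w(x)$ fails the strict inequality, global extinction at the critical value.

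Part (3) is where the real work lies, and I expect it to be the main obstacle. Applying Theorem~\ref{th:equiv1global}(2) to the discrete-time counterpart, global survival starting from $x$ yields $v\in[0,1]^X$ with $v(x)>0$ and $Mv\ge v$, that is $\lambda Kv\ge v$; the natural choice is $v=\mathbf{1}-\bar q$, for which Corollary~\ref{cor:forgotten} together with \eqref{eq:Gcontinuous} gives the sharper coordinatewise identity $\lambda Kv=v/(\mathbf{1}-v)$. The difficulty is to promote the non-strict inequality to $\lambda Kv>v$ in the order ``$\ge$ and $\ne$''. Here the ``moreover'' clause of Theorem~\ref{th:equiv1global}(2) is decisive: it identifies the equality set $\{y:Mv(y)=v(y)\}$ with the set where $G(\mathbf{1}-(1-t)v\,|\,y)=1-(1-t)v(y)$ holds for all $t\in[0,1]$, and substituting the continuous-time form \eqref{eq:Gcontinuous} this identity collapses, after clearing denominators, to the single condition $v(y)=0$ (concretely $a^2\,t(1-t)=0$ for all $t$, with $a=v(y)$). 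Since $v(x)>0$ we get strict inequality $\lambda Kv(x)>v(x)$, hence $\lambda Kv\ne v$ and therefore $\lambda Kv>v$ as claimed; the hypothesis that every vertex is reached serves only to keep the statement non-degenerate, no vertex being excluded from the inequality. The delicate point to write with care is exactly this reduction of the functional identity to $v(y)=0$, together with the observation that coordinatewise strictness cannot be demanded on $\{v=0\}$, which is a forward-closed set carrying a globally subcritical restricted process — so that the conclusion must be read in the paper's order ``$\ge$ and $\ne$'' rather than pointwise.
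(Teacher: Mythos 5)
Your proposal is correct and follows essentially the route the paper itself intends: the paper gives no separate proof of this corollary, merely noting that it follows from Theorems~\ref{th:equiv1global} and \ref{th:equiv1global3} applied to the discrete-time counterpart via the identity $M=\lambda K$ (whence $M_w(x)=\lambda K_w(x)$), which is precisely your reduction, supplemented for part (3) by Corollary~\ref{cor:forgotten} with $v=\mathbf{1}-\bar q$ and $\lambda Kv=v/(\mathbf{1}-v)$. Your reading of the strict inequality in (3) in the paper's declared order ``$\ge$ and $\ne$'', together with the observation that coordinatewise strictness fails on the forward-closed set $\{\bar q=\mathbf{1}\}$ and that the reachability hypothesis is not actually needed for the statement as written, is accurate.
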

Note the difference between Theorem~\ref{th:equiv1global}(2) and Corollary~\ref{cor:globalsurvivalcontinuous}(3):
in the second case we have the strict inequality $Mv=\lambda Kv>v$.
As a consequence of the previous corollary we can compute the global critical value for two frequently used
classes of continuous-time BRWs which are locally isomorphic to a branching process: for a site-breeding BRW
$\lambda_w(x)=1/k$ for all $x \in X$ (where $k(x)=k$ for all $x \in X$),
while for an edge-breeding BRW on a regular graph of degree $d$ we have that $\lambda_w(x)=1/d$ for all $x \in X$.

\begin{rem}\label{rem:finiteclasses}
 Consider a BRW $(X,\mu)$ where $X$ is finite. Following \cite[Remark 4.4]{cf:BZ2} one can prove that
$M_w(x) 
=\max_{w:x \to w}
M_s(w,w)$; 
remember that global survival starting from $x$ is equivalent to global (hence local)
survival in some class (hence at some point $w$) since $X$ is finite.
For a continuous-time BRW, this means that $K_w(x)=\max_{w:x \to w} K_s(w,w)$ and hence
$\lambda_w(x)=\min_{w:x \to w} \lambda_s(w)$.
If the BRW is finite and irreducible then there is only one class of irreducibility and the previous
results hold without $\max$ and $\min$.
\end{rem}

Still in the case of continuous-time BRWs, we have a
characterization of $\lambda_w(x)$, which makes use of the existence of a solution of certain
systems of inequalities.

\begin{teo}\textbf{\cite[Theorem 4.2]{cf:BZ2}}
\label{th:equiv1global2}
Let $(X,K)$ be a continuous-time BRW and let $x \in X$.
\begin{enumerate}[(1)]
\item
For any fixed $\lambda>0$ there is global survival starting from $x \in X$ if and only if there exists a solution
$v \in [0,1]^X$
of the inequality $\lambda Kv \ge v/(\mathbf{1}-v)$ such that $v(x)>0$.
\item If $\lambda \le \lambda_w(x)$ and $v \in [0,1]^X$ is such that $\lambda Kv \ge v/(\mathbf{1}-v)$
then $\inf_{y:x \to y,v(y) > 0} v(y) = 0$.
\item 
\[
\lambda_w(x)
=\inf \{\lambda \in \R: \exists v \in l^\infty_+(X), v(x) >  0 \text{ such that } \lambda K v  \ge v/(\mathbf{1}-v)\}.
\]
\item For all $n \in \N$, $n \ge 1$ we have
\[
\lambda_w(x)=
\inf \{\lambda \in \R: \exists v \in l^\infty_+(X), v(x)> 0 \text{ such that } \lambda^n K^n v \ge v \}.
\]
\end{enumerate}
\end{teo}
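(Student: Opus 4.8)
The plan is to deduce all four statements from Corollary~\ref{cor:forgotten}, exploiting that for a continuous-time BRW the discrete-time counterpart has independent diffusion whose generating function satisfies equation~\eqref{eq:Gcontinuous}, with first-moment matrix $M=\lambda K$. \emph{Part (1)} is then immediate: applying Corollary~\ref{cor:forgotten} with $M=\lambda K$, global survival starting from $x$ is equivalent to the existence of $v\in[0,1]^X$, $v(x)>0$, with $\lambda Kv\ge v/(\mathbf 1-v)$. \emph{Part (3)} follows at once. Any $v\in l^\infty_+(X)$ solving $\lambda Kv\ge v/(\mathbf 1-v)$ must satisfy $v<\mathbf 1$ pointwise, since otherwise the right-hand side is infinite while the left-hand side is finite (the hypothesis $\sup_x\sum_y k_{xy}<\infty$ makes $K$ map $l^\infty$ into $l^\infty$); hence $v\in[0,1)^X\subseteq[0,1]^X$ and (1) applies. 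Thus the set of $\lambda$ admitting such a $v$ with $v(x)>0$ coincides with the set of $\lambda$ for which there is global survival starting from $x$, and taking infima yields the stated formula for $\lambda_w(x)$.

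The core is \emph{Part (4)}. I would write $\ell_n:=\inf\{\lambda:\exists v\in l^\infty_+(X),\ v(x)>0,\ \lambda^nK^nv\ge v\}$ and show $\ell_n=\lambda_w(x)$ for every $n\ge1$, first treating $n=1$. If $\lambda Kv\ge v$ with $v\in l^\infty_+(X)$, $v(x)>0$, then for any $\lambda'>\lambda$ I rescale by putting $w:=cv$ with $c>0$ so small that $\varepsilon:=c\|v\|_\infty$ satisfies $1/(1-\varepsilon)\le\lambda'/\lambda$. Then $w\in[0,\varepsilon]^X$ and
\[
\lambda'Kw=\frac{\lambda'}{\lambda}\,\lambda Kw\ge\frac{\lambda'}{\lambda}\,w\ge\frac{1}{1-\varepsilon}\,w\ge\frac{w}{\mathbf 1-w},
\]
so by (1) there is global survival at $\lambda'$, whence $\lambda'\ge\lambda_w(x)$; letting $\lambda'\downarrow\lambda$ gives $\ell_1\ge\lambda_w(x)$. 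Conversely, global survival at $\lambda$ produces via (1) a $v\in[0,1)^X$, $v(x)>0$, with $\lambda Kv\ge v/(\mathbf 1-v)\ge v$, so $\lambda\ge\ell_1$ and $\ell_1=\lambda_w(x)$. For general $n$, iterating $\lambda Kv\ge v$ through the order-preserving operator $\lambda K$ gives $\lambda^nK^nv\ge v$, hence $\ell_n\le\ell_1$; for the reverse, from a solution of $\lambda^nK^nv\ge v$ I set $w:=\sum_{j=0}^{n-1}\lambda^jK^jv$, which lies in $l^\infty_+(X)$ (a finite sum of bounded functions, as $K$ is bounded on $l^\infty$) and satisfies $w(x)\ge v(x)>0$, and a telescoping computation gives $\lambda Kw=\sum_{j=1}^{n}\lambda^jK^jv\ge w$, so $\lambda\ge\ell_1$ and therefore $\ell_n\ge\ell_1=\lambda_w(x)$.

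Finally \emph{Part (2)}. Suppose $\lambda Kv\ge v/(\mathbf 1-v)$ with $v\in[0,1]^X$ and, for contradiction, $\delta:=\inf\{v(y):x\to y,\ v(y)>0\}>0$. Restricting to the forward-closed set $X_x=\{y:x\to y\}$ (on which $Kv$ only sees values inside $X_x$, since $y\in X_x$ and $k_{yz}>0$ force $z\in X_x$), on the support $A=\{y\in X_x:v(y)>0\}$ one has $v\ge\delta$, hence $v/(\mathbf 1-v)\ge v/(1-\delta)$ there; extending by $0$ off $A$ produces $\widehat v\in l^\infty_+(X)$ with $(1-\delta)\lambda K\widehat v\ge\widehat v$ and $\widehat v(y_0)>0$ for some $y_0\in X_x$. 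By the $n=1$ case of (4) applied at $y_0$, $(1-\delta)\lambda\ge\lambda_w(y_0)$; since $x\to y_0$ implies $\lambda_w(x)\le\lambda_w(y_0)$ (global survival from $y_0$ implies global survival from $x$), we obtain $\lambda\ge\lambda_w(x)/(1-\delta)>\lambda_w(x)$, contradicting $\lambda\le\lambda_w(x)$.

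The main obstacle will be the $n=1$ half of (4): upgrading a solution of the \emph{linear} inequality $\lambda Kv\ge v$ to genuine global survival. The rescaling $w=cv$ is precisely what bridges the linear inequality and the nonlinear survival criterion of (1), and one must verify throughout that $\sup_x\sum_y k_{xy}<\infty$ keeps every $K^jv$ and the series defining $w$ inside $l^\infty_+(X)$, and that each $l^\infty_+$ solution of the nonlinear inequality automatically lies in $[0,1)^X$ so that (1) is applicable. The remaining parts are then comparatively routine combinations of this step with the monotonicity of $\lambda_w$ under reachability.
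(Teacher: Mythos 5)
The paper itself does not prove this theorem: it is quoted from \cite[Theorem 4.2]{cf:BZ2}, and no argument appears in Section~\ref{sec:proofs}. So your proposal has to be judged on its own. It is essentially correct, and it follows what is in fact the natural (and, as far as I can tell, the original) route: part (1) is exactly Corollary~\ref{cor:forgotten} specialized to $M=\lambda K$, and the heart of the matter is the passage from the linear inequality $\lambda Kv\ge v$ to the nonlinear survival criterion. Your rescaling $w=cv$ with $c\|v\|_\infty\le 1-\lambda/\lambda'$, giving $\lambda'Kw\ge(\lambda'/\lambda)w\ge w/(\mathbf 1-w)$, is precisely the right bridge, and the telescoping sum $w=\sum_{j=0}^{n-1}\lambda^jK^jv$ correctly reduces the $n$-step condition to the one-step one (the standing hypothesis $\sup_x\sum_yk_{xy}<\infty$ keeps everything in $l^\infty$, as you note). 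Part (2) via restriction to the forward-closed set $\{y:x\to y\}$ and the monotonicity $\lambda_w(x)\le\lambda_w(y_0)$ for $x\to y_0$ is also sound.

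One point deserves more care than you give it. In parts (3) and (4) the competitors range over $l^\infty_+(X)$, and you dispose of values $v(y)\ge 1$ by saying the right-hand side of $\lambda Kv\ge v/(\mathbf 1-v)$ is then infinite. That covers $v(y)=1$ but not $v(y)>1$: there $v(y)/(1-v(y))$ is \emph{negative}, so the inequality holds vacuously at such points and carries no information. Taken with the literal algebraic meaning of $v/(\mathbf 1-v)$, statement (3) would even fail on a one-point example ($k_{xx}=1$, $v(x)=2$, any $\lambda>0$). The statement must therefore be read with the convention that $t/(1-t):=+\infty$ for $t\ge 1$ (the monotone extension of $t\mapsto t/(1-t)$ from $[0,1)$), under which your argument goes through verbatim; you should say this explicitly rather than leave it implicit. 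A similar, purely cosmetic caveat applies to part (2) when $v$ vanishes identically on $\{y:x\to y\}$, in which case the infimum in the statement is over the empty set and your contradiction argument has no $y_0$ to work with.
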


We note that, by taking $n=1$ in Theorem~\ref{th:equiv1global2}(4), we have that
$\lambda_w(x)=\inf \{\/ \undertilde r\,\!_K(v): v \in l^\infty(X), v(x)=1\}$ where
$\undertilde r\,\!_K(v)$ is the
 \textit{lower
Collatz-Wielandt number of} $v$ (see \cite{cf:FN1}, \cite{cf:FN2} and \cite{cf:Marek1}).
In particular, according to Theorem~\ref{th:equiv1global2}(4), we have that
for a continuous-time BRW
\[
\begin{split}
 \lambda_w(x)&=\inf \{\lambda > 0 : \exists v \in l^\infty_+(X), v(x)> 0 \text{ such that } \lambda K v \ge  v \}\\
&=\inf \{\lambda > 0 : \exists v \in l^\infty_+(X), v(x)> 0 \text{ such that } \lambda K v =  v \}
\end{split}
\]
while, according to Theorem~\ref{th:Muller08}, if the BRW is irreducible,
\[
 \lambda_s:=\max \{\lambda > 0 : \exists v > 0 \text{ such that } \lambda K v \le  v \}.
\]

Finally, we know that a continuous-time BRW dies out locally at $x$ when $\lambda=\lambda_s(x)$ (see Theorem~\ref{th:equiv1local}).
Theorem~\ref{th:equiv1global2}(2) states that the vector of probabilities of survival $v$ of a generic BRW at the critical point $\lambda_w(x)$
if it is not equal to $\mathbf{0}$ it satisfies $\inf_{y \in X} v(y)=0$. This proves immediately that an irreducible $\mathcal{F}$-BRW
dies out globally when $\lambda=\lambda_w$ (which is independent of $x \in X$).
Theorem~\ref{th:equiv1global2}(2) is the most reasonable result we can expect in full generality; indeed, here is an example of an irreducible BRW which
survives globally when $\lambda=\lambda_w$.

\begin{exmp}\textbf{\cite[Example 3]{cf:BZ2}}
\label{exm:4}
Let $X:=\N$ and $K$ be defined by
$k_{0\, 1}:=2$, $k_{n\, n+1}:=(1+1/n)^2$, $k_{n\, n-1}:= 1/3^{n}$
(for all $n \ge 1$) and $0$ otherwise.
Note that the corresponding continuous-time BRW is irreducible.
In order to show that $\lambda_w=1$ we look for solutions of the inequality
 $\lambda Kv \ge v/(\mathbf{1}-v)$. The system becomes
\[
\begin{cases}
2 \lambda v(1) \ge v(0)/(1-v(0)) & \\
\lambda(v(n+1) (1+1/n)^2+v(n-1)/3^n) \ge v(n)/(1-v(n))& \text{ for all }n\ge1.\\
\end{cases}
\]
Clearly, for all $\lambda \ge 1$,
$v(0)=1/2$ and $v(n):=1/(n+1)$ (for all $n \ge 1$) is a solution; this implies, according to Theorem~\ref{th:equiv1global2}(1), that there is global survival
for $\lambda \ge 1$, thus
$\lambda_w \le 1$.
If $\lambda <1$ then there are no solutions in $l^\infty_+(X)$.
Indeed one can prove by induction that any solution must satisfy
$v(n+1)/v(n) \ge \frac1\lambda\left(\frac{n}{n+1}\right)^2\left(
1-\frac{1}{2^n}\right)$ for all $n \ge 2$. Thus
$v(n+1)/v(n)$ is eventually larger than $1+\eps$ for some $\eps>0$,
hence either $v=\mathbf 0$ or $\lim_n v(n)= +\infty$.
This implies that $\lambda_w=1$ and there is global survival if $\lambda=\lambda_w$.
\end{exmp}

Another result, which applies to edge-breeding, irreducible, continuous-time BRWs
and which deals with the relation between $K_w$ and $\lambda_w$ is the following.

\begin{teo}\textbf{\cite[Theorem 3.3]{cf:BZ}}\label{th:Tnk}
Let $(X,K)$ be an edge-breeding, irreducible, continuous-time BRW on a multigraph $X$;
let us suppose that there exists $x_0 \in X$, $Y \subseteq X$ and $n_0 \in \N$
such that
\begin{enumerate}[(1)]
\item for all $x \in X$ we have that $B^+(x,n_0) \cap Y\not = \emptyset$;
\item for all $y \in Y$ there exists an injective map $\varphi_y:X \to X$, such that $\varphi_y(x_0)=y$ and
$k_{\varphi_y(x)\varphi_y(z)} \geq k_{xz}$ for all $x,z \in X$,
\end{enumerate}
where $B^+(x,n_0)$ is the set of all vertices which can be reached from $x$ in at most
$n_0$ steps.
Then $\lambda_w=1/K_w$.
\end{teo}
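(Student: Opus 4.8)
The plan is to prove the two inequalities $\lambda_w \ge 1/K_w$ and $\lambda_w \le 1/K_w$ separately, invoking hypotheses (1) and (2) only for the second, harder one. First I would record that irreducibility makes both $\lambda_w$ and $K_w$ independent of the base point (as noted after \eqref{eq:geomparam} and in Section~\ref{subsec:continuous}), so it suffices to work with $x_0$ and to write $K_w=K_w(x_0)=\liminf_n\sqrt[n]{\sum_{y}k^{(n)}_{x_0y}}$. The inequality $\lambda_w\ge 1/K_w$ is then exactly Corollary~\ref{cor:globalsurvivalcontinuous}(1), so it requires no new work.

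For the reverse inequality I would use the reformulation displayed just after Theorem~\ref{th:equiv1global2}, namely $\lambda_w(x_0)=\inf\{\lambda>0:\exists v\in l^\infty_+(X),\ v(x_0)>0,\ \lambda Kv\ge v\}$. Thus it is enough to show that for every $\mu<K_w$ there is a \emph{bounded}, strictly positive $v$ with $Kv\ge\mu v$: taking $\lambda=1/\mu$ gives $\lambda Kv\ge v$, whence $\lambda_w\le 1/\mu$, and letting $\mu\uparrow K_w$ yields $\lambda_w\le 1/K_w$. So the whole problem reduces to constructing a bounded positive approximate eigenfunction of $K$ with eigenvalue $\mu$ arbitrarily close to $K_w$.

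The role of condition (2) is to supply the homogeneity needed for this construction. Given $y\in Y$, the injective, weight-nondecreasing map $\varphi_y$ with $\varphi_y(x_0)=y$ sends any walk $x_0=u_0,u_1,\dots,u_n$ to the walk $y=\varphi_y(u_0),\dots,\varphi_y(u_n)$, whose weight $\prod_l k_{\varphi_y(u_{l-1})\varphi_y(u_l)}$ is at least $\prod_l k_{u_{l-1}u_l}$; since $\varphi_y$ is injective these images are distinct, so summing over all walks gives $\sum_z k^{(n)}_{yz}\ge \sum_z k^{(n)}_{x_0z}$ for every $n$ and every $y\in Y$. Hence the total-mass growth rate seen from each $y\in Y$ is already at least $K_w$, and, more importantly, the family $\{\varphi_y\}_{y\in Y}$ realizes the forward reproduction structure of $x_0$ simultaneously at every point of $Y$. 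This is the analogue, valid without a genuine local isomorphism, of the situation for $\cF$-BRWs, where the finite-quotient Perron eigenvector lifts through $g$ to a bounded eigenfunction with $Kv=K_wv$ and $\lambda_w=1/K_w$ (Corollary~\ref{cor:globalsurvivalcontinuous}(2)).

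Condition (1) then lets one pass from $Y$ to all of $X$: since $B^+(x,n_0)\cap Y\neq\emptyset$ for every $x$ and the degrees are uniformly bounded by $\sup_x\sum_y k_{xy}<\infty$, every vertex is joined to the ``$Y$-skeleton'' by a path of length at most $n_0$ of positive, uniformly controlled weight. I would use these short bridges to extend the profile defined over $Y$ to a bounded positive $v$ on all of $X$, at the cost of replacing $\mu$ by a slightly smaller value (the loss is absorbed because $n_0$ is fixed while the relevant walk lengths tend to infinity). The main obstacle is precisely this construction of $v$: the naive candidate $v(x)=\liminf_n \big(\sum_z k^{(n)}_{xz}\big)/\big(\sum_z k^{(n)}_{x_0z}\big)$ captures the growth rate $K_w$ and satisfies the correct inequality in spirit, but there is no reason for it to be \emph{bounded}, and boundedness is indispensable for applying Theorem~\ref{th:equiv1global2}(4). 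The content of hypotheses (1)--(2) is exactly that the embeddings $\varphi_y$ allow a single bounded profile to be reused over the whole $Y$-skeleton, rather than letting it grow from vertex to vertex, so that boundedness and the supersolution inequality $Kv\ge\mu v$ can be secured simultaneously. Verifying these two properties at once, and checking that the $n_0$-bridges do not degrade the growth rate below $K_w$, is the technical heart of the argument.
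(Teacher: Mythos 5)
Your proposal correctly disposes of the easy inequality ($\lambda_w\ge 1/K_w$ is Corollary~\ref{cor:globalsurvivalcontinuous}(1)) and correctly extracts from hypothesis (2), via injectivity of $\varphi_y$ and the walk-by-walk comparison, that $T^n_y=\sum_z k^{(n)}_{yz}\ge T^n_{x_0}$ for all $y\in Y$ and all $n$. But there is a genuine gap at exactly the point you flag yourself: the bounded, strictly positive supersolution $v$ with $Kv\ge\mu v$ (or $\lambda^nK^nv\ge v$) is never constructed, and without it the reduction to Theorem~\ref{th:equiv1global2}(4) proves nothing. The difficulty is not cosmetic. Hypotheses (1)--(2) control the \emph{growth rate of the total mass} $T^n_x$ uniformly in $x$, but a supersolution must satisfy a pointwise inequality at every vertex simultaneously for a \emph{single} fixed $n$, whereas the uniformity you can extract only yields, for each $x$, \emph{some} $n=n(x)\le\bar n$ with $(T^n_x)^{1/n}\ge K_w-\varepsilon$; the candidate $v=\mathbf 1$ therefore fails, and your $\liminf$ candidate is, as you note, not bounded. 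So the argument does not close.

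The paper's route (i.e.\ the proof in \cite[Theorem 3.3]{cf:BZ}, summarized in the discussion immediately following the statement) sidesteps the supersolution entirely. One first proves the intermediate criterion \cite[Theorem 3.1]{cf:BZ}: if for every $\varepsilon>0$ there is $\bar n$ with $\sup_{n\le\bar n}\sqrt[n]{T^n_x}\ge K_w-\varepsilon$ for \emph{all} $x$, then $\lambda_w=1/K_w$; this is established by a direct probabilistic comparison (over macro-generations of length at most $\bar n$) with a supercritical Galton--Watson process, not by exhibiting an $l^\infty$ eigenfunction. Hypotheses (1)--(2) are then used precisely to verify this uniform condition: (2) gives $T^n_y\ge T^n_{x_0}$ for $y\in Y$ as you showed, and (1) gives, for every $x$, a path of length $j\le n_0$ to some $y\in Y$ whose weight is at least $1$ (edge-breeding means $k_{uv}$ is an integer $\ge 1$ on edges), whence $T^{n+j}_x\ge T^n_y\ge T^n_{x_0}$ and $\sqrt[n+j]{T^{n+j}_x}\ge (T^n_{x_0})^{n/(n(n+n_0))}\ge K_w-\varepsilon$ for $n$ large but fixed independently of $x$. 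If you want to salvage your plan, you should either prove the intermediate uniform-growth criterion yourself or find an actual construction of the bounded supersolution; as written, the ``technical heart'' you defer is the whole theorem.
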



The previous theorem is based on the following result (see \cite[Theorem 3.1]{cf:BZ})
which gives an interesting sufficient condition for the equality
 $\lambda_w=1/K_w$.
If the multigraph satisfies this geometrical condition:
\[
\forall \varepsilon>0\, \exists\bar n=\bar n(\varepsilon):
\sup_{n\le \bar n}
\sqrt[n]{T_x^{n}}\ge K_w-\varepsilon,\, \forall x\in X
\]
then $\lambda_w=1/K_w$.
Note that, by definition of $K_w$, for all fixed $\varepsilon>0$ and $x \in X$, there exists
$n_x$ such that $\sqrt[n_x]{T_x^{n_x}}\ge K_w-\varepsilon$.
The above condition is a request of uniformity in $x$.
An application of Theorem~\ref{th:Tnk} is given in
the following example.

\begin{exmp}\textbf{\cite[Example 3.3]{cf:BZ}}\label{exmp:0}
Given a sequence of positive natural numbers $\{m_k\}_{k\ge 1}$
we construct a non-oriented, rooted tree $\T$ (with root $o$) such that if $x\in \T$ satisfies $d(o,x)=k$
then it has $m_{k+1}$ neighbors $y$ such that $d(o,y)=k+1$ (where $d$ is the natural distance of
a non-oriented graph). We call this radial graph $T_{\{m_k\}}$-tree.
If the sequence is periodic of period $b$, then Theorem~\ref{th:Tnk} applies
with $x_0=o$, $n_0=b$, $Y:=\cup_{n\in\N} S(o,nb)$ (where $S(o,nb)$ is the sphere with center $o$ and radius
$nb$ with respect to the distance $d$) and  $\varphi_y$
(where $y \in Y$) maps isomorphically
the tree $\T$ onto the subtree branching from $y$; in this case
the global critical parameter $\lambda_w$ of the
(irreducible) edge-breeding, continuous-time BRW on the tree
equals $1/K_w$.
Note that for every periodic sequence the BRW is
not an $\cF$-BRW, hence Corollary~\ref{cor:globalsurvivalcontinuous}(2) does not apply.
%
\end{exmp}

We already observed that local survival depends only on the first moment matrix $M$.
It is clear that if we investigate the global survival in a class of BRWs where there is a
one-to-one correspondence
between first moment matrices and processes (as in the case of continuous-time BRWs), then also
the global survival depends only
on $M$. This is also true, by Corollary~\ref{cor:forgotten},
for a BRW with independent diffusion satisfying 
equation~\eqref{eq:counterpart}.
On the other hand, for a generic BRW,
according to the following example,
the global survival does not depend exclusively on $M$; in
particular, even $M_w(x) 
>1$ does not imply global survival starting from $x$.

\begin{exmp}\textbf{\cite[Example 4.4]{cf:Z1}}\label{exm:noext}
Let $X=\mathbb N$ and consider the family of BRWs $(\mathbb N,\mu)$ with
$\mu_i=p_i \delta_{n_i \ident_{\{i+1\}}+\ident_{\{i-1\}}} +(1-p_i)\delta_{\bf 0}$ (for all $i\ge 1$) and
$\mu_0=p_0 \delta_{n_0 \ident_{\{1\}}} +(1-p_0)\delta_{\bf 0}$.
Roughly speaking, each particle at $i \ge 1$ has $n_i$ children at $i+1$ and $1$ at $i-1$ with probability $p_i$ and
no children at all with probability $1-p_i$; each particle at $0$ has $n_0$ children at $1$ with probability $p_0$
and no children at all otherwise.

According to Theorem~\ref{th:equiv1global}(1) global survival starting from $0$ is equivalent
to the existence of $z\in [0,1]^{\mathbb N}$, $z(0)<1$,
such that $G(z|i)\le z(i)$, for all $i$ where
\[
G(z|i)=
 \begin{cases}
p_i z(i+1)^{n_{i}}z(i-1) +1-p_i & i \ge 1\\
p_0 z(1)^{n_{0}} +1-p_0 & i =0.
 \end{cases}
\]
The trick is to choose the sequences $\{p_i\}_{i\in \N}$ and $\{n_i\}_{i\in \N}$ such that
$p_i \to 0$ fast enough and $p_i n_i=2$ for all $i \in \N$; this way, the unique solutions
of $G(z)\le z$ is $z= \mathbf 1$.
All the details can be found in \cite[Example 4.4]{cf:Z1}.

On the other hand, if the BRW is given by
$\mu_i=1/2 \, \delta_{4 \ident_{\{i+1\}}}+p_i \delta_{\ident_{\{i-1\}}} +(1/2-p_i)\delta_{\bf 0}$ (for all $i\ge 1$) and
$\mu_0=1/2 \delta_{4 \ident_{\{1\}}} +1/2 \delta_{\bf 0}$ (where $p_i$ is the same as before) then
the first-moment matrix $M$ is the same as before, but in this case the process
survives globally
(the total number of particles dominates a branching process with $\bar \rho=2$).
Moreover, $M_w(x_0) 
\ge 2$ since at each step the expected number
of children $\bar \rho_x$ is at least $2$ for all $x \in X$.
\end{exmp}

Another legitimate question arises from Theorem~\ref{th:equiv1global3}: is it true that $\sum_{y \in X} m_{xy}<1$
for all $x \in X$ implies global extinction? According to the following example
(see also \cite[Example 1]{cf:BZ2}), the answer is negative.

\begin{exmp}\label{exm:4.5}
We start by considering a reducible BRW.
Let $X=\N$, $\{p_n\}_{n\in\N}$ be a sequence in $(0,1]$ and suppose that a particle at $n$
has one child at 
$n+1$ with
probability $p_n$ and no children with probability $1-p_n$.
The generating function of this process is
$\widetilde G(z|n)=1-p_n + p_n z(n+1) $.
The probability of extinction of this process, starting with one particle at $n$, equals
$z(n)=1-\prod_{i=n}^{\infty} p_i$ ($z$ is the smallest solution of $\widetilde G(z)=z$); hence it survives wpp,
if and only if $\sum_{i=1}^\infty (1-p_i) < +\infty$.

This process is stochastically dominated by the irreducible BRW where
each particle at  $n\ge1$ has one child at $n+1$ with
probability $p_n$, one child at $n-1$ with probability $(1-p_n)/2$
(if $n=0$ then it has one child at $0$ with probability $(1-p_0)/2$)
and no children at all with probability $(1-p_n)/2$.
The generating function $G$ can be explicitly computed
\[
G(z|n)=
\begin{cases}
\frac{1-p_n}{2} + \frac{1-p_n}{2} z(n-1)+ p_n z(n+1) & n \ge 1 \\
\frac{1-p_0}{2} + \frac{1-p_0}{2} z(0) + p_0 z(1) & n=0.\\
\end{cases}
\]
By coupling this process with the previous one (see \cite[Section 3.3]{cf:Z1})
or, simply, by applying Theorem~\ref{th:equiv1global}(1)
($z(n)=1-\prod_{i=n}^{\infty} p_i$ is a solution of $G(z) \le z$) one can prove that
$\sum_{i=1}^\infty (1-p_i) < +\infty$ implies global survival. Note that here
$\sum_{j \in \N} m_{ij} = (1+p_i)/2 <1$; clearly, $M_w(i) 
=1$.
\end{exmp}

The following example shows how to apply the results of this section
to the study of a couple of interesting branching processes.
The first one appears for instance in the proof of \cite[Proposition 3.6]{cf:Muller08-2}
(see also Corollary~\ref{cor:Mueller08}).

\begin{exmp}\label{exm:BP1-2}
Let $\rho$ be a measure on $\N$ with generating function
$\phi(z):=\sum_{n\in \N} \rho(n) z^n$ and denote by $\bar \rho=\frac{\diff}{\diff z}\phi(z)|_{z=1}$
(suppose that $\rho(0)<1$).
Consider the following Galton-Watson branching processes. BP$_1$ is the process
where each particle gives birth to $n$ children with probability $\rho(n)$ and each
newborn particle is killed (independently) with probability $1-p$. BP$_2$ is the process
where each particle is killed (independently) before breeding with probability $1-p$, otherwise it
gives birth to $n$ children with probability $\rho(n)$. We suppose that $p \in(0,1)$ to avoid trivial
situations.

In order to study these two branching processes simultaneously, consider the BRW
$\{\eta_n\}_{n \in \N}$ defined by $X:=\{1,2\}$ and
\[
 \mu_1(a,b):=
\begin{cases}
 0 & \textrm{if } a\not = 0 \\
 \rho(b) & \textrm{if } a = 0 \\
\end{cases}
\qquad
 \mu_2(a,b):=
\begin{cases}
 0 & \textrm{if } b\not = 0  \textrm{ or } a\ge 2\\
 1-p & \textrm{if } a =b= 0 \\
 p & \textrm{if } a =1,\, b= 0, \\
\end{cases}
\]
where $\mu_j(a,b)\equiv\mu_j(f:f(1)=a,f(2)=b)$.
Clearly
\[
 G(z_1,z_2)=\left ( \phi(z_2), pz_1+1-p
\right ), \qquad \forall z_1,z_2 \in [0,1].
\]
Note that $\{\eta_{2n}(1)\}_{n \in \N}$ and $\{\eta_{2n}(2)\}_{n \in \N}$ are
realizations of BP$_1$ and BP$_2$ respectively.
Indeed the 2-step generating function of $\{\eta_n\}_{n \in \N}$ is
given by $G^{(2)}(z_1,z_2|1)=\phi(pz_1+1-p)$ and $G^{(2)}(z_1,z_2|2)=p\phi(z_2)+1-p$;
$z \mapsto \phi(pz+1-p)$ is the generating function of BP$_1$ and $z \mapsto p\phi(z)+1-p$
is the generating function of BP$_2$.
Note that there is no distinction between local and global survival for the BRW since it is irreducible and
$X$ is finite. Moreover the survivals of the BRW, the BP$_1$ and the BP$_2$ are all equivalent and, in turn, they are
equivalent, for instance, to $1 < \frac{\diff}{\diff z} \phi(pz+1-p)|_{z=1}=p \bar \rho$
(see Example~\ref{ex:branchingprocess}).
The vector of extinction probabilities
satisfies $\bar q=G(\bar q)$ that is
\[
 \begin{cases}
\phi(\bar q(2))=\bar q(1) \hfill &\\
p\bar q(1) +1-p=\bar q(2). \hfill &\\
 \end{cases}
\]
Note that $\bar q(1)$ (resp.~$\bar q(2)$) is also the extinction probability of BP$_1$ (resp.~BP$_2$)
since $\bar q=(\bar q(1), \bar q(2))$ is also the smallest fixed point of
$G^{(2)}$ (see Section~\ref{subsec:survivalprob}).
Clearly $\bar q(1)<1$ if and only if $\bar q(2)<1$ and, in this case, $p(\bar q(2)-\bar q(1))= (1-p)(1-\bar q(2))>0$.
This
implies that if there is survival then the probability of survival of BP$_1$ is strictly larger
than the probability of survival of BP$_2$.
The same result can be obtained by convexity and by the fact that $\phi(1)=1$ which implies
the following order relation between the generating functions of BP$_1$ and BP$_2$:
$\phi(pz+1-p) < p\phi(z)+1-p$ whenever $z \not = 1$ and $p\in (0,1)$, whence $\bar q(1)<\bar q(2)$.
Finally if we denote by $\bar \alpha$ the probability of extinction of BP$_1$ when $p=1$, that is,
the smallest solution in $[0,1]$ of $\phi(z)=z$, from the inequality $\phi(\bar q(2))=\bar q(1)< \bar q(2)$ we have
also $\phi(\bar q(1)) <\phi(\bar q(2)) =\bar(q(1))$. Hence $\bar \alpha < \min(\bar q(1), \bar q(2))$.
\end{exmp}

\subsection{Strong local survival}
\label{subsec:stronglocal}

The main result of this section is the following proposition.

\begin{pro}\label{pro:qtransitive}
 Let $(X, \mu)$ be an irreducible and quasi-transitive BRW.
Then the existence of $x \in X$ such that there is local survival at $x$ (i.e.~$q(x,x) < 1$)
implies that there is strong local survival at $y$ starting from $w$ for
every $w,y \in X$ (i.e~$q(w,y)=\bar q(w)$).
\end{pro}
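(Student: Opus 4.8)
The plan is to show that the fixed point of $G$ recording local extinction is forced to coincide with $\bar q$ by playing the three hypotheses off one another. First I would reduce the statement. Local survival at some $x$ together with irreducibility gives local survival at every vertex, so $q(w,w)<1$ and hence $\bar q(w)<1$ for all $w\in X$; moreover, in the irreducible case $q(w,y)=q(w,w)$, so the assertion $q(w,y)=\bar q(w)$ reduces to proving $z=\bar q$ for the fixed point $z\in[0,1]^X$ of $G$ given by $z(w):=q(w,w)=q(w,y_0)$ (for any fixed $y_0$), which is a fixed point since each $q(\cdot,y_0)$ is. Since $\bar q\le z<\mathbf{1}$ and $\bar q<\mathbf{1}$, Proposition~\ref{pro:maximumprinciple} applies to $z$ and $\widehat z:=(z-\bar q)/(\mathbf{1}-\bar q)$ is well defined; it then suffices to prove $\widehat z\equiv 0$.

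Next I would use quasi-transitivity to force $\widehat z$ to be constant. Each bijection $\gamma$ of Definition~\ref{def:quasitransitive} makes $G$ equivariant, $G(h\circ\gamma)=G(h)\circ\gamma$ for all $h\in[0,1]^X$; since $\bar q=\lim_n G^{(n)}(\mathbf{0})$ and $\mathbf{0}\circ\gamma=\mathbf{0}$, this yields $\bar q\circ\gamma=\bar q$, and the automorphism induced by $\gamma$ preserves the diagonal extinction probabilities, $z(\gamma(w))=q(\gamma(w),\gamma(w))=q(w,w)=z(w)$. Hence $\widehat z$ is $\gamma$-invariant for every such $\gamma$, so with $X_0$ as in the definition of quasi-transitivity one has $\widehat z(x)=\widehat z(x_0)$ whenever $x=\gamma(x_0)$; thus $\widehat z(X)\subseteq\widehat z(X_0)$ is a finite set and $\widehat z$ attains its maximum at some $w^\ast\in X$. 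Applying the ``$\{y:x\to y\}$''-version of Proposition~\ref{pro:maximumprinciple} at $w^\ast$, maximality excludes the existence of $y$ with $w^\ast\to y$ and $\widehat z(y)>\widehat z(w^\ast)$, so $\widehat z(y)=\widehat z(w^\ast)$ for all $y$ with $w^\ast\to y$; by irreducibility this is all of $X$, whence $\widehat z\equiv c$ for a constant $c\in[0,1]$.

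It remains to show $c=0$. If $c=1$ then $z=\mathbf{1}$, i.e.\ $q(w,w)=1$, contradicting local survival, so $c<1$; suppose for contradiction $0<c<1$. Passing to the no-death BRW $\{\widehat\eta_n\}_{n\in\N}$ associated to the process (Section~\ref{subsec:nodeath}), the bijection $T_{\bar q}$ carries fixed points of $\widehat G$ onto fixed points of $G$, so $c\mathbf{1}=T_{\bar q}^{-1}z$ is a fixed point of $\widehat G$. Evaluating at the constant vector gives $\widehat F_x(c)=c$ for every $x$, where $\widehat F_x(t)=\sum_n\widehat\rho_x(n)t^n$ is the total-offspring generating function of $\widehat\mu_x$, with $\widehat F_x(0)=\widehat\mu_x(\mathbf{0})=0$. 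A convex probability generating function with $\widehat F_x(0)=0$, $\widehat F_x(1)=1$ and $\widehat F_x(c)=c$ for some $c\in(0,1)$ must be the identity, so $\widehat\rho_x(1)=1$ for all $x$: the no-death BRW reduces to a single immortal particle performing a random walk. But then local survival of $\widehat\eta$ at $y$ starting from $y$ is exactly the event that this walk visits $y$ infinitely often, which has probability $0$ or $1$ by the usual recurrence dichotomy; hence $1-\widehat z(y)=1-c\in\{0,1\}$, contradicting $c\in(0,1)$. Therefore $c=0$, i.e.\ $z=\bar q$, which is the desired strong local survival.

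The main obstacle is the constancy step: Proposition~\ref{pro:maximumprinciple} only spreads maxima locally, so on an infinite graph it is useless unless one first guarantees that $\widehat z$ actually attains its supremum. Quasi-transitivity supplies exactly this, by confining $\widehat z$ to the finitely many values $\widehat z(X_0)$, and the real work is dovetailing it with the reach-version of the maximum principle and irreducibility. The secondary technical point is excluding an intermediate constant $0<c<1$: here one must descend to the no-death BRW precisely so that the relevant generating functions become genuine probability generating functions vanishing at $0$, which is what makes the convexity and the zero-one law for recurrence applicable.
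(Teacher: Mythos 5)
Your proof is correct, but it reaches the conclusion by a genuinely different route from the paper's. The paper observes that a quasi-transitive BRW is an $\mathcal F$-BRW and deduces the proposition as an immediate corollary of Theorem~\ref{th:Fgraph} (uniqueness of the fixed point $z$ of $G$ with $\sup_x z(x)<1$), whose proof pushes everything to the finite quotient $(Y,\nu)$, applies the maximum principle there, and derives a contradiction from the \emph{strict} convexity of $t\mapsto G_Y^{(\bar n)}(\bar q^Y+t(h-\bar q^Y)\,|\,y_0)$ supplied by Lemmas~\ref{lem:convexity} and~\ref{lem:generationn} (this is where Assumption~\ref{assump:1} enters). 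You instead stay on $X$: quasi-transitivity makes $\widehat z$ take only the finitely many values $\widehat z(X_0)$, so it attains its maximum, and the reach-version of Proposition~\ref{pro:maximumprinciple} together with irreducibility forces $\widehat z\equiv c$; you then exclude $c\in(0,1)$ by a one-dimensional convexity argument ($\widehat F_x(0)=0$, $\widehat F_x(c)=c$, $\widehat F_x(1)=1$ forces $\widehat F_x=\mathrm{id}$) and dispose of the resulting degenerate single-walker case with the recurrence zero--one law. What the paper's route buys is the stronger Theorem~\ref{th:Fgraph}, valid for all $\mathcal F$-BRWs, where one cannot expect the normalized fixed point to be constant, only to have supremum less than $1$; what yours buys is a shorter, more self-contained argument for the quasi-transitive case that bypasses the local-isomorphism machinery and the two convexity lemmas, and that handles the degenerate offspring law probabilistically rather than via Assumption~\ref{assump:1}. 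All the ingredients you invoke ($G(h\circ\gamma)=G(h)\circ\gamma$ and $\bar q\circ\gamma=\bar q$ for $\gamma$-invariant $\mu$, $q(x,y)=q(x,x)$ in the irreducible case, the bijection $T_{\bar q}$ between fixed points of $\widehat G$ and of $G$, and the formula $(1-q(x,A))/(1-\bar q(x))$ for the conditioned survival probability) are established in the paper, so the argument is complete.
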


In the particular case of a quasi-transitive, irreducible BRW with no death
and with independent diffusion, 
Proposition~\ref{pro:qtransitive} was proved in~\cite[Theorem 3.7]{cf:Muller08-2}.
The proof we give in Section~\ref{sec:proofs} is of a different nature and it is a corollary of the following result
which describes some properties of fixed points of $G$ in the case of an $\mathcal{F}$-BRW.

\begin{teo}\label{th:Fgraph}
 Let $(X, \mu)$ be an $\mathcal F$-BRW.
Then, there exists at most one fixed point $z$ for $G$ such that $\sup_{x \in X} z(x)<1$,
namely $z=\bar q$.
Hence
for all $x \in X$, 
$q(\cdot,x) = \bar q(\cdot)$ or $\sup_{w \in X} q(w,x)= 1$. In particular when $(X,\mu)$ is irreducible then
it is either $q(x,x)=\bar q(x)$ for all $x \in X$ or $\sup_{x \in X} q(x,x)= 1$.
\end{teo}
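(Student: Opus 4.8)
The plan is to reduce everything to the statement that \emph{any} fixed point $z$ of $G=G_X$ with $c:=\sup_{x\in X}z(x)<1$ must equal $\bar q$. Since $\bar q=\lim_n G^{(n)}(\mathbf 0)$ is the smallest fixed point (Section~\ref{subsec:survivalprob}), we have $\bar q\le z\le c\,\mathbf 1<\mathbf 1$, so $\bar q(x)<1$ for every $x$; and since $(X,\mu)$ is locally isomorphic to a finite $(Y,\nu)$, equation~\eqref{eq:G-FBRWs} applied to $\mathbf 0$ gives $\bar q=\bar q_Y\circ g$, whence $\bar q_Y(v)<1$ for all $v\in Y$. This makes the maximum principle of Proposition~\ref{pro:maximumprinciple} available both on $X$ and, crucially, on the finite space $Y$.

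First I would push $z$ down to $Y$. Define $z^+(v):=\sup_{x\in g^{-1}(v)}z(x)\in[\bar q_Y(v),c]$. Using $z\le z^+\circ g$, the monotonicity of $G_X$, and the identity $G_X(z^+\circ g\,|\,x)=G_Y(z^+\,|\,g(x))$ from equation~\eqref{eq:Gfunctions}, one gets for every $x\in g^{-1}(v)$ that $z(x)=G_X(z|x)\le G_X(z^+\circ g|x)=G_Y(z^+|v)$; taking the supremum over the fibre yields the subsolution inequality $G_Y(z^+)\ge z^+$ on the \emph{finite} set $Y$. This is the device that converts the possibly non-attained supremum of $z$ over an infinite space into a genuine maximum over $Y$.

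Next, set $\widehat z:=(z^+-\bar q_Y)/(\mathbf 1-\bar q_Y)\in[0,1]^Y$ and let $M_0:=\max_v\widehat z(v)$, attained because $Y$ is finite, with $M_0\le c<1$; suppose $M_0>0$ for contradiction. Applying Proposition~\ref{pro:maximumprinciple} to $(Y,\nu)$ and the solution $z^+$ of $G_Y(z^+)\ge z^+$ (in the $\{u:v\to u\}$ form) at a maximizer $v^*$ forces $\widehat z\equiv M_0$ on the forward cone $\{u:v^*\to u\}$; restricting to a final (sink) irreducibility class $C$ inside this cone gives $\widehat z\equiv M_0$ on $C$, i.e.\ $z^+|_C=(1-M_0)\bar q_Y|_C+M_0\mathbf 1_C$. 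Since a final class keeps all reproduction inside $C$, for $v\in C$ the map $\psi_v(t):=G_Y((1-t)\bar q_Y+t\mathbf 1\,|\,v)$ is a power series in $t$ with nonnegative coefficients, hence convex, with $\psi_v(0)=\bar q_Y(v)$ and $\psi_v(1)=1$. The subsolution inequality says $\psi_v(M_0)\ge(1-M_0)\bar q_Y(v)+M_0$, which is exactly the value at $M_0$ of the chord of $\psi_v$; convexity gives the reverse inequality, so $\psi_v$ is affine on $[0,1]$. As $\bar q_Y(v)<1$, this can happen only if $\nu_v(\cH\le 1)=1$, i.e.\ every particle in $C$ has at most one child.

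The last step rules this out. With $\nu_v(\cH\le 1)=1$ on $C$, the map $G_Y$ restricted to $C$ is affine, $z\mapsto(\mathbf 1-M_C\mathbf 1)+M_Cz$, with $M_C=(m_{vu})_{v,u\in C}$ substochastic and irreducible. Assumption~\ref{assump:1} applied to the class $C$ yields a vertex with $\nu_v(\cH=0)>0$, so some row sum of $M_C$ is $<1$, and by Perron--Frobenius the spectral radius of $M_C$ is $<1$. On the other hand $G_Y(z^+)\ge z^+$ on $C$ rewrites as $M_C v\ge v$ for $v:=(1-M_0)(\mathbf 1-\bar q_Y)|_C>0$, so the Collatz--Wielandt characterization forces the spectral radius to be $\ge 1$: a contradiction. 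Hence $M_0=0$, i.e.\ $z^+=\bar q_Y$ and $z=\bar q$. The consequences are then immediate: each $q(\cdot,x)$ is a fixed point of $G$ by~\eqref{eq:extprobab}, so $\sup_w q(w,x)<1$ forces $q(\cdot,x)=\bar q$; and in the irreducible case $q(w,x)=q(w,w)$ (Section~\ref{subsec:survivalprob}) gives the dichotomy for $q(x,x)$. I expect the main obstacle to be precisely this infinite-to-finite passage — the supremum of $z$ is not attained on $X$, and only the subsolution $G_Y(z^+)\ge z^+$ together with the localization to a final class makes the maximum principle and Perron--Frobenius applicable; the closely related delicate point is that Assumption~\ref{assump:1} is indispensable to exclude the critical one-child (drift) behaviour on $C$, whose affine generating function would otherwise admit a whole ray of fixed points below $\mathbf 1$.
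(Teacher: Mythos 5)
Your overall architecture coincides with the paper's up to and including the key reduction: both proofs set $h(v)=z^+(v)=\sup_{x\in g^{-1}(v)}z(x)$, derive the subsolution inequality $G_Y(z^+)\ge z^+$ on the finite set $Y$ via equation~\eqref{eq:Gfunctions}, and apply Proposition~\ref{pro:maximumprinciple} at an extremizer of $(z^+-\bar q_Y)/(\mathbf 1-\bar q_Y)$ (the paper minimizes $t(y)$, which is the reciprocal of your $\widehat z$). The endgame is genuinely different. The paper stays entirely with convexity: it uses Assumption~\ref{assump:1} together with Lemma~\ref{lem:generationn} to produce an $\bar n$ for which $G_Y^{(\bar n)}$ is \emph{strictly} convex along the segment $t\mapsto\bar q_Y+t(h-\bar q_Y)$, extends that segment up to the exit parameter $t(y_0)\ge 1$, and contradicts $G_Y^{(\bar n)}\le\mathbf 1$ if $t(y_0)>1$. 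You instead localize to a final class $C$, use the chord argument to show that the subsolution inequality with $0<M_0<1$ forces the offspring law on $C$ to be degenerate ($\nu_v(\cH\le 1)=1$), and then kill that case by Perron--Frobenius. This trades the $\bar n$-step construction of Lemma~\ref{lem:generationn} for a separate linear-algebra case; both routes lean on Assumption~\ref{assump:1} (and both tacitly use that it is inherited by $(Y,\nu)$, which the paper also asserts without proof).

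One step of your final case is wrong as written, though it is repairable in one line. On the final class $C$, with $G_Y(z|v)=\bigl(1-\sum_u m_{vu}\bigr)+\sum_u m_{vu}z(u)$, the inequality $G_Y(z^+)\ge z^+$ rewrites as $M_C(\mathbf 1-z^+)\le\mathbf 1-z^+$, i.e.\ $M_C\xi\le\xi$ with $\xi=(1-M_0)(\mathbf 1-\bar q_Y)|_C$ --- the \emph{opposite} direction from the $M_C v\ge v$ you claim --- and $M_C\xi\le\xi$ with $\xi>0$ only gives $\rho(M_C)\le 1$, which does not contradict $\rho(M_C)<1$. The fix is to use the exact fixed-point identity $G_Y(\bar q_Y)=\bar q_Y$ on $C$ rather than the subsolution for $z^+$: in the affine case it reads $M_C(\mathbf 1-\bar q_Y)|_C=(\mathbf 1-\bar q_Y)|_C$, and since $(\mathbf 1-\bar q_Y)|_C>0$ is a genuine positive eigenvector with eigenvalue $1$, irreducibility of $M_C$ forces $\rho(M_C)=1$, contradicting the bound $\rho(M_C)<1$ you obtained from Assumption~\ref{assump:1} and substochasticity. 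With that substitution your argument is complete.
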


The proof of this theorem, which can be found in Section~\ref{sec:proofs}, relies on
Lemmas~\ref{lem:convexity}~and~\ref{lem:generationn} which guarantee the strict convexity of
the function $G$ evaluated on a line in $[0,1]^X$.
The existence of an example of an irreducible $\mathcal F$-BRW where $\bar q(x) < q(x,x)<1$ for all $ x \in X$ is given
in Example~\ref{ex:nonstronglocalFBRW}.

In particular we can describe the case when $X$ is finite
(not necessarily irreducible).
Clearly in this case $\bar q(w)=\min_{x \in X: w \to x} q(w,x)$, hence for all $w$ such that $\bar q(w)<1$
there exists $x$ such that $q(w,x)=\bar q(w)$.
Moreover, using Theorem~\ref{th:Fgraph}, for all $x \in X$ we have that
it is either $q(\cdot, x)=\bar q(\cdot)$ or there exists $w \in X$ such that $q(w,x)=1$. If the BRW is
irreducible (and $X$ is finite) then it is 
$\bar q(w)=q(w,w)$ for all $w \in X$ or $q(w,x)=1$ for all $w,x \in X$.
Recall that, in the irreducible case,  if $\rho_x(0)>0$ for
all $x \in X$, then strong local survival is a common property of all vertices
as local and global survival are (see discussion in .
Section~\ref{subsec:survivalprob}).
This is clearly false in the reducible case but it might be false as well in the
irreducible case if we drop the assumption $\rho_x(0)>0$ for
all $x \in X$ as Example~\ref{exm:irreduciblenotstrongeverywhere} shows.

If we are dealing with a continuous-time BRW, it might happen that if $\lambda$ is small enough or large enough there is
strong local survival but in a intermediate interval for $\lambda$ there might be global and local
survival with different probabilities.
You can find this behavior in the BRW of Example~\ref{rem:nonstrongandstrong}
which is inspired by Remark~\ref{rem:strongconditioned}.
In particular this shows that, unlike local and global survival, strong local survival
is not \textit{monotonic}.

The following result is
a natural generalization of \cite[Theorem 3.1]{cf:MenshikovVolkov}. We give a sketch of
the proof in Section~\ref{sec:proofs}.

\begin{teo}
\label{th:MenshikovVolkov}
Let $(X,\mu)$ be an irreducible, globally surviving BRW. 
Then there is no strong local survival
if and only if  there exists a finite, nonempty set $A \subseteq X$ and a function $v \in [0,1]^X$ such that
$\bar q \le v$ and
\begin{equation}\label{eq:MenshikovVolkov}
 \begin{cases}
  G(v|x) \ge v(x), & \forall x \in A^\complement,\\
  (\tau_{\bar q}v)(x_0) > \max_{x \in A} (\tau_{\bar q}v)(x) & \textrm{for some } x_0 \in A^\complement,
 \end{cases}
\end{equation}
where $\tau_{\bar q}v=(v-\bar q)/(\mathbf{1}-\bar q)$ is the inverse of the map $T_{\bar q}$ defined
in Section~\ref{subsec:nodeath} (and the ratio is taken coordinatewise).
\end{teo}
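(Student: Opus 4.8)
The plan is to reduce the statement to the no-death case, where it coincides with \cite[Theorem 3.1]{cf:MenshikovVolkov}, and then transport the characterization through the conjugation $T_{\bar q}$ introduced in Section~\ref{subsec:nodeath}. Since $(X,\mu)$ is irreducible and globally surviving, global survival from one vertex forces $\bar q(x)<1$ for every $x\in X$ (see Section~\ref{subsec:survivalprob}); hence $T_{\bar q}$ is a bijection and the associated no-death BRW $\{\widehat\eta_n\}_{n\in\N}$, with generating function $\widehat G=T_{\bar q}^{-1}\circ G\circ T_{\bar q}$ from equation~\eqref{eq:G-one}, is well defined. First I would check that $\{\widehat\eta_n\}_{n\in\N}$ is again irreducible: a direct inspection of the formula for $\widehat\mu_x$ shows that, because $\bar q(y)<1$ for all $y$, a surviving child can be placed at $y$ exactly when $(x,y)\in E_\mu$, so $E_{\widehat\mu}=E_\mu$ and connectedness is inherited.

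Next I would set up the dictionary between the two processes. For an irreducible BRW the function $q(\cdot,A)$ does not depend on the choice of the finite nonempty set $A$ and equals $q(\cdot,\{x\})$ (Section~\ref{subsec:survivalprob}); consequently strong local survival is an $A$-independent dichotomy, namely either $q(\cdot,A)=\bar q$ for every finite $A$ or $q(\cdot,A)>\bar q$ for every finite $A$. For the no-death process one has $\widehat q(\cdot,A)=\tau_{\bar q}\,q(\cdot,A)=T_{\bar q}^{-1}q(\cdot,A)$, the probability of local extinction in $A$ conditioned on global survival recorded in Section~\ref{subsec:nodeath}, and since $T_{\bar q}$ is a bijection between the fixed points of $\widehat G$ and those of $G$, the identity $q(\cdot,A)=\bar q$ is equivalent to $\widehat q(\cdot,A)=\mathbf{0}$. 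Hence there is no strong local survival for $\{\eta_n\}_{n\in\N}$ if and only if there is no almost sure local survival for the irreducible no-death process $\{\widehat\eta_n\}_{n\in\N}$.

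At this point I would apply \cite[Theorem 3.1]{cf:MenshikovVolkov} to $\{\widehat\eta_n\}_{n\in\N}$: absence of almost sure local survival is equivalent to the existence of a finite nonempty $A\subseteq X$ and of $\widehat v\in[0,1]^X$ with $\widehat G(\widehat v|x)\ge\widehat v(x)$ for all $x\in A^\complement$ and $\widehat v(x_0)>\max_{x\in A}\widehat v(x)$ for some $x_0\in A^\complement$, which is precisely the $\bar q=\mathbf{0}$ instance of the statement being proved. It then remains to transport this via $v:=T_{\bar q}\widehat v$, equivalently $\widehat v=\tau_{\bar q}v$. One gets $v\in[0,1]^X$ with $\bar q\le v$; because $T_{\bar q}^{-1}$ is coordinatewise increasing and $\widehat G=T_{\bar q}^{-1}\circ G\circ T_{\bar q}$, the inequality $\widehat G(\widehat v|x)\ge\widehat v(x)$ is equivalent to $G(v|x)\ge v(x)$ for each $x$, and the strict inequality is, by definition of $\tau_{\bar q}$, exactly $(\tau_{\bar q}v)(x_0)>\max_{x\in A}(\tau_{\bar q}v)(x)$. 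This yields the asserted equivalence with system~\eqref{eq:MenshikovVolkov}.

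I expect the main obstacle to be the probabilistic identification underlying the dictionary rather than the algebra of the conjugation. The delicate points are (i) that $\{\widehat\eta_n\}_{n\in\N}$, obtained from $\{\eta_n\}_{n\in\N}$ by retaining only the particles with an infinite line of descent and conditioning on $\mathcal{A}_x$, is genuinely an irreducible BRW with no death whose extinction probabilities are the conditioned quantities $\tau_{\bar q}q(\cdot,A)$, and (ii) that local survival in $A$ for $\{\widehat\eta_n\}_{n\in\N}$ coincides, on the event of global survival, with local survival in $A$ for $\{\eta_n\}_{n\in\N}$, so that the two notions of strong local survival genuinely match. Both facts are furnished by the construction in Section~\ref{subsec:nodeath}; once they are in place, the remaining equivalence is a formal consequence of the known no-death result and of conjugation by $T_{\bar q}$, consistent with the maximum-principle viewpoint of Proposition~\ref{pro:maximumprinciple}, which is exactly why the condition is phrased through $\tau_{\bar q}v$.
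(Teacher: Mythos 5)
Your proposal is correct and follows essentially the same route as the paper: reduce to the no-death case via the conjugation $\widehat G=T_{\bar q}^{-1}\circ G\circ T_{\bar q}$, observe that strong local survival for $\{\eta_n\}_{n\in\N}$ matches almost sure local survival for the associated no-death process, and transport the system~\eqref{eq:MenshikovVolkov} through $T_{\bar q}$. The only difference is that you invoke \cite[Theorem 3.1]{cf:MenshikovVolkov} as a black box, whereas the paper also sketches its proof (a stopped submartingale $Q_n=\prod_x v(x)^{\eta_{n\wedge\sigma}(x)}$ for one direction and the explicit choice $v=q_0(\cdot,\{\bar x\})$ for the converse), which is done there only for completeness.
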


It is worth mentioning at least one result for irreducible BRWs with no death.
The following proposition gives a general criterion for the strong local survival
of a BRW with no death and with independent diffusion. 

\begin{pro}\textbf{\cite[Lemma 3.4]{cf:Muller08-2}}\label{pro:Mueller08}
 Let $(X,\mu)$ be an irreducible BRW with independent diffusion 
where $\rho_x(0)=0$ for all $x \in X$. If for some $c>0$ the set $C:=\{x:q(x,x) \le 1-c\}$
is visited infinitely often by the BRW, then there is strong local survival.
\end{pro}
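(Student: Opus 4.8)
The plan is to reduce the statement to an almost-sure local-survival assertion at a fixed site, and then extract that assertion from a bounded martingale built out of the conditional probability of local extinction.

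First I would record the reductions. Since $\rho_x(0)=0$ for all $x\in X$, the population never dies out, so $\bar q\equiv\mathbf 0$; consequently ``strong local survival'' means precisely $q(x,y)=\bar q(x)=0$, i.e.\ almost-sure local survival at $y$ starting from $x$, for every $x,y\in X$. Because the BRW is irreducible, I may invoke the fact recalled in Section~\ref{subsec:survivalprob} that $q(x,y)=q(x,x)$ for all $x,y\in X$. In particular every $w\in C$ satisfies $q(w,y)=q(w,w)\le 1-c$ for the \emph{fixed} target $y$, which supplies a \emph{uniform} lower bound $c$ on the probability that the progeny of a single particle sitting somewhere in $C$ survives locally at $y$. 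This uniformity over the (possibly infinite) set $C$ is exactly what makes the argument work, and it is precisely what irreducibility buys.

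Now fix $y\in X$ and an arbitrary starting vertex $x$, and work under $\pr^{\delta_x}$. Let $N:=\{\limsup_{n}\eta_n(y)>0\}$ be the event of local survival at $y$, let $(\cF_n)_{n\in\N}$ be the natural filtration of the BRW, and set $Z_n:=\pr(N^\complement\mid\cF_n)$. By the branching property, conditionally on $\cF_n$ the descendants of the particles alive at time $n$ evolve as independent BRWs, so the conditional probability of never returning to $y$ factorises as $Z_n=\prod_{w\in X}q(w,y)^{\eta_n(w)}$. As a sequence of conditional expectations of the fixed random variable $\mathbf 1_{N^\complement}$ along the increasing filtration $(\cF_n)$, $(Z_n)$ is a bounded martingale, so by L\'evy's upward theorem $Z_n\to\mathbf 1_{N^\complement}$ almost surely.

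The last step is to show this limit is $0$. By hypothesis, $\pr^{\delta_x}$-almost surely the set $C$ is visited infinitely often: there are infinitely many times $n$ for which $\eta_n(w)\ge 1$ for some $w=w(n)\in C$. For each such $n$, keeping only that one factor and bounding every other factor by $1$, I obtain $Z_n\le q(w(n),y)=q(w(n),w(n))\le 1-c$, using the irreducibility identity. Hence $\liminf_n Z_n\le 1-c<1$ on the event $\{C\text{ visited infinitely often}\}$, which has probability one; since $Z_n$ converges to the $\{0,1\}$-valued $\mathbf 1_{N^\complement}$, the limit cannot equal $1$, so $\mathbf 1_{N^\complement}=0$, i.e.\ $N$ occurs, almost surely. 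Thus $q(x,y)=0=\bar q(x)$ for the arbitrary pair $x,y$, which is strong local survival. The only genuinely delicate points are the branching-property factorisation of $Z_n$ and the correct reading of ``visited infinitely often'' as an almost-sure statement; once these are in place the martingale convergence finishes the proof, and no independence of $E_k$-type events (the usual obstacle in a Borel--Cantelli approach) is required.
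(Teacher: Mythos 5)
Your proof is correct. Note that the paper does not actually prove this proposition: it is quoted from M\"uller \cite{cf:Muller08-2} without an argument, so there is no in-paper proof to compare against. Your route --- the branching-property factorisation $\pr(N^\complement\mid\cF_n)=\prod_{w}q(w,y)^{\eta_n(w)}$, L\'evy's upward convergence to the $\{0,1\}$-valued indicator, and the uniform bound $q(w,y)=q(w,w)\le 1-c$ on $C$ coming from irreducibility --- is the standard one and is exactly in the spirit of the product (super)martingale $Q_n=\prod_x v(x)^{\eta_n(x)}$ used in the paper's proof of Theorem~\ref{th:MenshikovVolkov}. Two remarks: you correctly flag that the hypothesis ``visited infinitely often'' must be read almost surely (otherwise one only gets $q(x,y)\le 1-\pr(C\text{ i.o.})$, not strong local survival), which matches the paper's subsequent reformulation via recurrence of $(X,P)$; and your argument never actually uses the independent-diffusion assumption, so it proves the slightly more general statement for any irreducible no-death BRW, consistent with the paper's remark that independent diffusion only serves to rephrase the hypothesis in terms of the random walk $(X,P)$.
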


As a corollary one can prove Proposition~\ref{cor:Mueller08} concerning BRWs which are locally isomorphic
to branching processes. Note that, since in the previous proposition we deal with BRWs with
independent diffusion, it is possible to substitute the hypothesis that ``the BRW visits infinitely often
the set $C$'' with ``the set $C$ is recurrent for the random walk $(X,P)$''.

In view of the discussion of Section~\ref{subsec:nodeath}, a reasonable hypothesis for a generalization
of the previous result to a BRW where $\rho_x(0) \neq 0$ for some $x \in X$, could be
the fact that the BRW visits infinitely often the set $C:=\{x:(1-q(x,x))/(1-\bar q(x)) \ge c\}$
for some $c>0$.

\subsection{Pure global survival}
\label{subsec:pureweak}

The idea of \textit{pure global survival} has been introduced in continuous-time
BRW theory (and, more generally, in interacting
particle theory) to define the situation where $\lambda_s(x)>\lambda_w(x)$. In this case
for every $\lambda \in (\lambda_w(x), \lambda_s(x)]$ there is a positive probability
of global survival starting from $x$ but the colony dies out locally at $x$ almost surely.
We know that when $\lambda=\lambda_s(x)$ there is local extinction at $x$ due to
Corollary~\ref{cor:pemantleimproved}, while if
  $\lambda=\lambda_w(x)$ both global extinction or global survival starting from $x$
are possible (due to Example~\ref{exm:4} and Corollary~\ref{cor:globalsurvivalcontinuous}(2)
respectively).
Hence it is conceivable that when $\lambda=\lambda_w(x)=\lambda_s(x)$
it might happen that the process dies out locally but survives globally (we do not know
of any example though). From now on we agree with many authors by 
defining the phase of \textit{pure global survival} at $x$ when $\lambda_w(x)<\lambda_s(x)$.
A necessary condition for the existence of a pure global survival phase starting from $x$
is clearly that $K_s(x,x)<K_w(x)$; indeed,
according to Corollary~\ref{cor:globalsurvivalcontinuous}(1), if $K_s(x,x)=K_w(x)$
then there is no pure global survival
starting from $x \in X$ since
$1/K_w(x) \le \lambda_w(x) \le \lambda_s(x) =1/K_s(x,x)$.
In some cases 
this condition is also sufficient (see for instance Corollary~\ref{cor:globalsurvivalcontinuous}(2)
and Theorem~\ref{th:Tnk}).

We note that for an irreducible BRW, given $A \subseteq X$, $q(x,A)<1$ for some $x \in X$
if and only if
 $q(y,A)<1$ for all $y \in X$; thus, the existence of pure global survival does not depend
on the starting vertex.
It has been observed that the existence of a pure global survival phase is in many cases associated
with nonamenability. We start with
a characterization of nonamenability for irreducible,
non-oriented discrete-time BRWs.

Recall that for an irreducible BRW $M_s(x,y)=M_s$ 
and
$M_w(x)=M_w 
$ for all $x, y \in X$.
Analogously $\lambda_w(x)=\lambda_w$ and $\lambda_s(x)=\lambda_s$ for all $x \in X$ in the case of an irreducible
continuous-time BRW.

\begin{teo}\label{th:nonam}
 Let $(X,\mu)$ be an irreducible, non-oriented $\mathcal{F}$-BRW. Then the following
claims are equivalent:
\begin{enumerate}[(1)]
 \item the BRW is nonamenable;
\item $M_s 
<
M_w 
$.
\end{enumerate}
\end{teo}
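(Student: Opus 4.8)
The plan is to identify both $M_s$ and $M_w$ with spectral data of a single reversible Markov operator and then invoke the Kesten--Cheeger dichotomy. Since the BRW is non-oriented, $M=(m_{xy})$ is a symmetric, nonnegative, irreducible matrix, and the standing hypothesis $\sup_x\sum_y m_{xy}<\infty$ makes it a bounded self-adjoint operator on $\ell^2(X)$. First I would record two identifications. By the spectral theorem for the self-adjoint operator $M$ (see \cite[Section 7.A]{cf:Woess} and \cite{cf:Woess09}), for irreducible symmetric $M$ one has $M_s=\lim_n(m^{(2n)}_{xx})^{1/2n}=\|M\|_{\ell^2\to\ell^2}$, the $\ell^2$ spectral radius. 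On the other hand, because $(X,\mu)$ is an $\cF$-BRW, there is a surjection $g:X\to Y$ onto a finite set with the local-isomorphism identity $\sum_{w}m^{X,(n)}_{xw}z(g(w))=\sum_{y\in Y}m^{Y,(n)}_{g(x)y}z(y)$ of Section~\ref{subsec:FBRWs}; taking $z=\mathbf 1_Y$ gives $\sum_w m^{X,(n)}_{xw}=(M_Y^n\mathbf 1)(g(x))$, where $M_Y$ is the finite, irreducible quotient first-moment matrix. Hence $M_w$ equals the Perron--Frobenius eigenvalue of $M_Y$ (and the defining $\liminf$ is in fact a limit).

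Next I would build a positive eigenfunction. Let $\psi>0$ be the Perron eigenvector of $M_Y$, normalized so $\psi\in[0,1]^Y$, with $M_Y\psi=M_w\psi$, and set $h:=\psi\circ g$ on $X$. Applying the identity above with $z=\psi$ yields $Mh=M_w\,h$ on all of $X$, and since $Y$ is finite with $\psi>0$ there are constants $0<c\le h(x)\le C<\infty$. I would then define $P(x,y):=m_{xy}h(y)/(M_w h(x))$, which is stochastic (as $\sum_yP(x,y)=Mh(x)/(M_w h(x))=1$), irreducible, and reversible with respect to $\pi(x):=h(x)^2$, because $\pi(x)P(x,y)=h(x)h(y)m_{xy}/M_w$ is symmetric in $x,y$. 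Multiplication by $h$ is a unitary isomorphism $\ell^2(X,\pi)\to\ell^2(X)$ intertwining $P$ with $M/M_w$, so that the spectral radii coincide:
\[
\rho(P)=\frac{1}{M_w}\,\|M\|_{\ell^2\to\ell^2}=\frac{M_s}{M_w}.
\]
Thus the inequality $M_s<M_w$ is exactly the statement $\rho(P)<1$.

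It then remains to tie $\rho(P)<1$ to nonamenability. Here I would invoke the Kesten--Cheeger theorem for reversible Markov operators: $\rho(P)<1$ if and only if the weighted Cheeger constant $\iota_P:=\inf_{S}\big(\sum_{x\in S,\,y\in S^\complement}\pi(x)P(x,y)\big)/\pi(S)$ is strictly positive, the nonamenable direction being the Cheeger inequality $1-\rho(P)\ge\tfrac12\iota_P^2$ and the converse using the Følner test functions $\mathbf 1_{S_k}$. Finally I would compare $\iota_P$ with $\iota_{(X,\mu)}$ of equation~\eqref{eq:amenable}: since $h$ is bounded above and below, $\pi(S)\asymp|S|$ and $\sum_{x\in S,\,y\in S^\complement}\pi(x)P(x,y)=M_w^{-1}\sum_{x\in S,\,y\in S^\complement}h(x)h(y)m_{xy}\asymp\sum_{x\in S,\,y\in S^\complement}m_{xy}$, uniformly in $S$. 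Hence $\iota_P>0$ if and only if $\iota_{(X,\mu)}>0$, i.e.\ if and only if the BRW is nonamenable. Chaining $M_s<M_w\Leftrightarrow\rho(P)<1\Leftrightarrow\iota_P>0\Leftrightarrow\iota_{(X,\mu)}>0$ proves the equivalence.

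The main obstacle, and the only place where the $\cF$-structure is essential beyond producing $M_w$, is the construction of the globally two-sidedly bounded positive eigenfunction $h$: this is what supplies the reversibilizing density $\pi\asymp$ counting measure and makes the weighted Cheeger constant of $P$ comparable to the cardinality-based constant $\iota_{(X,\mu)}$. Without such uniform bounds, e.g.\ for a general irreducible symmetric BRW lacking finitely many local types, the $\ell^2$ spectral radius of $P$ need not detect the combinatorial constant $\iota_{(X,\mu)}$, which is precisely why the theorem is restricted to $\cF$-BRWs.
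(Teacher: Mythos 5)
Your proof is correct, and while it shares the paper's two key identifications --- $M_s=\|M\|_{\ell^2(X)}$ for the bounded self-adjoint operator $M$, and $M_w$ as the Perron--Frobenius eigenvalue of the finite quotient matrix $M_Y$ --- it bridges spectrum and amenability by a different mechanism. The paper's proof (following \cite[Theorem 3.6]{cf:BZ} and \cite[Theorem 2.6]{cf:Stacey03}) works directly with the non-stochastic operator $M$: it derives a Dirichlet-norm inequality $\|f\|_{D(2)}\ge c\|f\|_2$ from nonamenability and passes to the graph induced by $M^2$. You instead perform a ground-state transform: lift the Perron eigenvector of the quotient to a positive eigenfunction $h$ with $Mh=M_w h$ and $0<c\le h\le C$, conjugate to the reversible stochastic kernel $P(x,y)=m_{xy}h(y)/(M_w h(x))$ so that $\rho(P)=M_s/M_w$, and then quote the Kesten--Cheeger dichotomy for reversible chains, the two-sided bounds on $h$ making the weighted Cheeger constant comparable to $\iota_{(X,\mu)}$. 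This is arguably cleaner and isolates exactly where the $\cF$-hypothesis is used (namely, to produce the globally bounded eigenfunction). One caution: the inequality you quote, $1-\rho(P)\ge\tfrac12\iota_P^2$, as literally stated only pushes $\sup\mathrm{spec}(P)$ away from $1$; since $\rho(P)=\|P\|_{\ell^2(\pi)}=\max\bigl(\sup\mathrm{spec}(P),-\inf\mathrm{spec}(P)\bigr)$, you need the form of the isoperimetric--spectral estimate that bounds $|\langle Pf,f\rangle|$ for all $f$ (e.g.\ \cite[Theorem 10.3]{cf:Woess}, whose proof controls $\langle Pf,f\rangle^2$ and hence both ends of the spectrum), or else square the operator as the paper does; otherwise the bottom of the spectrum could a priori touch $-1$ and spoil $\rho(P)<1$.
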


\noindent A sketch of the proof of this result can be found in Section~\ref{sec:proofs}.
Note that an irreducible discrete-time $\mathcal{F}$-BRW is in a pure global
survival regime (i.e.~there is global survival and local extinction) if and only if
$M_s \le 1 < M_w$. Hence, the equivalent conditions in Theorem~\ref{th:nonam} are
necessary but not sufficient for global survival.

On the other hand, in the irreducible continuous-time case, the existence of a phase of
pure global survival is equivalent to $\lambda_w < \lambda_s$.
According to  Corollaries~\ref{cor:pemantleimproved} and \ref{cor:globalsurvivalcontinuous}, for
an $\mathcal{F}$-BRW,
$\lambda_s=1/K_s=\lambda/M_s$ and 
$\lambda_w=1/K_w=\lambda/M_w$, 
thus
 we have the following corollary.

\begin{cor}\textbf{\cite[Theorem 3.6]{cf:BZ}}\label{cor:nonam}
Let $(X,E(X))$ be an irreducible and non-oriented, continuous-time $\cF$-BRW.
Then $\lambda_w<\lambda_s$ if
and only if $(X,E(X))$ is nonamenable.
\end{cor}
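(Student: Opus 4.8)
The plan is to assemble Corollary~\ref{cor:pemantleimproved}, Corollary~\ref{cor:globalsurvivalcontinuous}(2) and Theorem~\ref{th:nonam} into a single chain of equivalences, after passing to the discrete-time counterpart in which the first-moment matrix is $M=\lambda K$. Recall that all the properties involved (irreducibility, being non-oriented, being an $\cF$-BRW, and nonamenability) transfer from a continuous-time BRW to its discrete-time counterpart, and that nonamenability is independent of $\lambda$. Since the BRW is irreducible, the quantities $K_s$, $K_w$, $M_s$, $M_w$, $\lambda_s$, $\lambda_w$ do not depend on the chosen vertex, so I may drop the spatial argument throughout.

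First I would record the effect of the scaling $M=\lambda K$ on the geometric parameters of \eqref{eq:generalgeomparam} and \eqref{eq:geomparam}: since $m^{(n)}_{xy}=\lambda^n k^{(n)}_{xy}$, one has $M_s=\lambda K_s$ and $M_w=\lambda K_w$. Combining this with Corollary~\ref{cor:pemantleimproved} (which gives $\lambda_s=1/K_s$) and Corollary~\ref{cor:globalsurvivalcontinuous}(2) (which gives $\lambda_w=1/K_w$, valid because the process is an irreducible $\cF$-BRW) yields
\[
\lambda_s=\frac{1}{K_s}=\frac{\lambda}{M_s},\qquad \lambda_w=\frac{1}{K_w}=\frac{\lambda}{M_w}.
\]
Because $\lambda>0$ and $M_s,M_w>0$, the inequality $\lambda_w<\lambda_s$ is then equivalent to $\lambda/M_w<\lambda/M_s$, that is, to $M_s<M_w$.

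Finally, Theorem~\ref{th:nonam}, applied to the irreducible, non-oriented discrete-time $\cF$-BRW counterpart, states precisely that $M_s<M_w$ holds if and only if the BRW is nonamenable, which closes the equivalence $\lambda_w<\lambda_s\iff(X,E(X))$ nonamenable. The only point requiring a little care is the consistency of the two sides under the scaling: the left-hand condition $\lambda_w<\lambda_s$ is phrased in terms of critical values, whereas nonamenability is a $\lambda$-free property. This causes no trouble, since the intermediate condition $M_s<M_w$ is itself $\lambda$-free once rewritten as $K_s<K_w$ (the factor $\lambda$ cancels), so the equivalence produced by Theorem~\ref{th:nonam} is automatically compatible with the $\lambda$-independence of amenability. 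I expect no genuine obstacle here: the substance lies entirely in the cited theorems, and the proof amounts to the bookkeeping that links $\lambda_s,\lambda_w$ to $M_s,M_w$ and then invokes the characterization of nonamenability.
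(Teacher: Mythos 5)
Your proof is correct and follows exactly the route the paper takes: the text immediately preceding the corollary derives $\lambda_s=1/K_s=\lambda/M_s$ and $\lambda_w=1/K_w=\lambda/M_w$ from Corollaries~\ref{cor:pemantleimproved} and \ref{cor:globalsurvivalcontinuous}, and then invokes Theorem~\ref{th:nonam} for the equivalence $M_s<M_w\iff$ nonamenability. Your additional remark on the $\lambda$-independence of both sides is sound bookkeeping and consistent with the paper's observation that amenability of a continuous-time BRW does not depend on $\lambda$.
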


We observe that in the irreducible case in the pure global survival phase, the colony
survives globally by clearing all finite subsets $A\subseteq X$. We might think of it
as a sort of drifting towards some sort of boundary of the graph (we do not want to
give more details on this).

\begin{exmp}\label{exmp:pureglobalsurvival}
The classical examples of amenable and nonamenable graphs are $\Z^d$ (for all $d \in \N$)
and $\mathbb{T}_d$ (with $d \ge 3$) respectively. These properties
can be verified by computing explicitly $\lambda_s$ and $\lambda_w$ for the edge-breeding
continuous-time BRWs on these graphs.

Let us consider the Euclidean lattice $X=\Z^d$ and the edge-breeding BRW on $X$.
In this case Corollaries~\ref{cor:pemantleimproved} and \ref{cor:globalsurvivalcontinuous}
apply and $\lambda_s=\lambda_w=1/2d$ since $\bar \rho=2 \lambda d$.
Indeed $\Z^d$ is an amenable graph.

If we consider an edge-breeding BRW on the homogeneous tree $\mathbb{T}_d$, where
each edge has $d \ge 3$ neighbors, the situation is different. 
Easy computations show that $\lambda_w=1/d$ (since the graph is regular) and
$\lambda_s=1/2\sqrt{d-1}$. Indeed, observe that, in this case, 
$\lambda_s=r/d$
where
$r=\max\{t \in \mathbb{R} : F(x,x|t) \le 1\}$ (as explained in Section~\ref{subsec:locisomBP}
after Proposition~\ref{pro:F-BRW1}). In this case it is easy to find the explicit
expression of the function $F(x,x|t)= (d-\sqrt{d^2-4(d-1)t^2})/(2(d-1)t)$
(see, for instance, the proof of \cite[Lemma 1.24]{cf:Woess}), whence
$r=d/2\sqrt{d-1}$. We have verified by direct computation that $\lambda_w < \lambda_s$.
\end{exmp}

Pure global survival is a fragile property of a BRW. Finite modifications,
such as for an edge-breeding BRW attaching a complete finite graph to a vertex
or removing a set of
vertices and/or edges,
 can create it or destroy it as we show in the following remark.

\begin{rem}\textbf{\cite[Remark 3.2]{cf:BZ}}\label{rem:finitegraph}
For simplicity, in this remark we consider only edge-breeding BRWs
on multigraphs, that is, continuous-time BRWs on $X$ where $k_{xy}$ is the number of edges
from $x$ to $Y$ for all $x,y \in X$.
Note that if $(Y,E(Y))$ is a submultigraph of $(X,E(X))$ then
$\lambda_w^X \le \lambda_w^Y$,
$\lambda_s^X \le \lambda_s^Y$,
$K_w^X \ge K_w^Y$, $K_s^X \ge K_s^Y$.

Suppose that there exists a finite subset $S \subseteq X$
such that $X \setminus S$ is the disjoint union of
a finite number of connected multigraphs $X_1, \ldots, X_n$
then, the existence of a pure global survival phase on
$X$, implies the existence of a pure global survival phase on some $X_i$.
Indeed
for every
$\lambda \in (\lambda_w(X),\lambda_s(X))$ the $\lambda$-BRW leaves
eventually a.s.~the subset $S$.
Hence it survives (globally but not locally) at least on one connected component; this means that,
although
$\lambda_s(X_i) \ge \lambda_s(X)$, $\lambda_w(X_i) \ge \lambda_w(X)$ for all $i=1, \ldots,n$,
there exists $i_0$ such that $\lambda_w(X_{i_0}) = \lambda_w(X)$. The existence of a pure
global survival phase on $X_{i_0}$ follows from
$\lambda_s(X_{i_0}) \geq \lambda_s(X)>\lambda_w(X)=\lambda_w(X_{i_0})$.

Moreover if there exists a subset $S$ as above such that $\lambda_w(X_i) > \lambda_w(X)$ for all $i$,
then there is no pure global survival for the BRW on $X$.
Take for instance
a graph $(X^\prime,E(X^\prime))$ and $k \in \N$ such that
$1/k < \lambda_w(X^\prime)$. Attach a \textsl{complete graph} of degree $k$ to a vertex of
$X^\prime$ by an edge (a complete graph is a finite set where every couple $(x,y)$ is an edge),
we obtain a new graph $X$ such that $\lambda_s(X)=\lambda_w(X)
\leq 1/k < \lambda_w(X^\prime)$; hence even if the BRW on $X^\prime$ has a pure global survival phase,
the BRW on $X$ has none.
Nevertheless, using the same arguments as in Example~\ref{rem:nonstrongandstrong}, if the original BRW has
a pure global survival phase, the new one has non-strong local survival.
\end{rem}

There are examples of irreducible amenable BRWs with pure global survival (see Example~\ref{exm:amenable}) and of
irreducible nonamenable BRWs with no
pure global survival (see Example~\ref{exm:nonamenable} which makes use of Remark~\ref{rem:finitegraph}).
Recall that, for an edge-breeding BRW on a graph (or a multigraph), nonamenability is equivalent
to the usual nonamenability of the graph.
\begin{exmp}\label{exm:amenable}
 Consider an irreducible, edge-breeding continuous-time BRW on the (non-oriented) graph $X$ obtained by attaching to a copy of
 $\N$ one branch $T$ of the homogeneous tree $\mathbb T_3$ (see Figure~\ref{fig:amenable}).
 The BRW is amenable by the presence of $\N$.
We claim that $\lambda_s^X=\lambda_s^{\mathbb T_3}$ and $\lambda_w^X = \lambda_w^{\mathbb T_3}$.
Indeed $T\subset X\subset \mathbb T_3$, hence
 $\lambda_s^T\ge \lambda_s^X\ge\lambda_s^{\mathbb T_3}$ and  $\lambda_w^T\ge
 \lambda_w^X \ge \lambda_w^{\mathbb T_3}$.  But by approximation, $\lambda_s^T=\lambda_s^{\mathbb T_3}$.
 Indeed $\lambda_s^T\ge \lambda_s^{\mathbb T_3}$ and does not depend on the starting vertex; moreover
 $T$ contains arbitrarily large balls isomorphic to balls of $\mathbb T_3$, hence by Theorem~\ref{th:spatial}
 their critical local parameters coincide.
 Note that by Remark~\ref{rem:finitegraph} since $\T$ is a disjoint union of three copies of $T$,
then $\lambda_w^T=\lambda_w^{\mathbb T_3}$.
 Using Example~\ref{exmp:pureglobalsurvival} we have
 $\lambda_w^X = \lambda_w^T \le \lambda_w^{\mathbb T_3}$

It is worth mentioning an alternative proof of $\lambda_w^T=\lambda_w^{\mathbb T_3}$ which makes use
of Theorem~\ref{th:equiv1global2}(4). Indeed $\lambda_w^{\mathbb T_3}=1/3$ and 
it is clear that
the function $v \in l^\infty_+(\mathbb T_3)$ defined by
\[
 v(x):=
\begin{cases}
 1 & x=o \\
 5 - 2^{2-n} & d(x,o)=n\\
\end{cases}
\]
(where $o$ is the root of $T$ and $d(x,o)$ denotes the natural distance on the graph between $x$ and $o$)
is a solution of $\frac{1}{3} K v =v$; this implies $\lambda_w^{T}\le 1/3$.
\end{exmp}

\begin{exmp}\label{exm:nonamenable}
Consider a nonamenable graph $X^\prime$ such that the corresponding edge-breeding continuous-time
BRW has a pure global survival phase (take for instance $X^\prime:=\mathbb{T}_3$ the homogeneous tree
with degree $3$). Following Remark~\ref{rem:finitegraph}, attach
to a vertex of $X^\prime$ a complete graph with degree $k>1/\lambda_w^{X^\prime}$ by an edge
(see Figure~\ref{fig:t3complete}).
It is easy to show that the resulting
graph $X$ is still nonamenable, nevertheless, according to Remark~\ref{rem:finitegraph}, there
is no pure global survival for the corresponding edge-breeding BRW.
Roughly speaking, since $\lambda_w^X \le 1/k<1/3$, then for every
$\lambda \in (\lambda_w^X,1/3)$ the process cannot survive globally in
$X^\prime:=\mathbb{T}_3$ hence it hits infinitely often with positive probability the complete
graph, hence $\lambda_s^X=\lambda_w^X$.
%
\end{exmp}

The following result gives a useful sufficient condition for the absence of pure global
survival for a continuous-time BRW $(X,K)$
which is based only on the geometry of the graph $(X,E_K)$ generated by the BRW
(where $E_K:=\{(x,y) \in X^2: k_{xy}>0\}$). It is a slight generalization of \cite[Proposition 2.1]{cf:BZ}.

\begin{teo}\label{th:subexponentialgrowth}
 Let $(X,K)$ be a continuous-time non-oriented BRW and let $x_0 \in X$.
Suppose that there exists $\kappa \in (0,1]^X$ and $\{c_n\}_{n \in \N}$  such that, for all $n \in \N$
\[
\begin{cases}
 \kappa(y)/\kappa(x_0)\le c_n & \forall y \in B(x_0,n) \\
\kappa(x)k_{xy}=\kappa(y)k_{yx} & \forall x,y \in X, \\
\end{cases}
\]
where $B(x,n)$ is the ball of center $x$ and radius $n$
w.r.~to the natural distance of the graph $(X,E_K)$.
If $\sqrt[n]{c_n} \to 1$ and $\sqrt[n]{|B(x_0,n)|} \to 1$ as $n \to 1$
then $K_s(x_0,x_0)=K_w(x_0)$ and there is no pure global survival starting from $x_0$.
\end{teo}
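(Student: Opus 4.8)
The plan is to reduce everything to the single inequality $K_w(x_0) \le K_s(x_0,x_0)$. The reverse inequality $K_s(x_0,x_0) \le K_w(x_0)$ is already available from the supermultiplicative remark following equation~\eqref{eq:geomparam}, so once $K_w(x_0) \le K_s(x_0,x_0)$ is established we get $K_s(x_0,x_0) = K_w(x_0)$, and the absence of pure global survival follows at once from the chain $1/K_w(x_0) \le \lambda_w(x_0) \le \lambda_s(x_0) = 1/K_s(x_0,x_0)$ recorded before the statement (which rests on Corollaries~\ref{cor:pemantleimproved} and \ref{cor:globalsurvivalcontinuous}(1)). Thus all the work lies in this one spectral-type estimate.

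First I would symmetrize $K$ by means of the reversibility hypothesis. Setting $S_{xy} := \sqrt{\kappa(x)}\,k_{xy}/\sqrt{\kappa(y)}$, the detailed-balance identity $\kappa(x)k_{xy} = \kappa(y)k_{yx}$ is exactly what makes $S$ symmetric; since $S = D^{1/2} K D^{-1/2}$ with $D$ the diagonal matrix of $\kappa$, the matrices $S$ and $K$ are similar, whence $S^{(n)}_{xy} = \sqrt{\kappa(x)/\kappa(y)}\,k^{(n)}_{xy}$ and each power $S^{(n)}$ is again symmetric. The payoff is the identity
\[
k^{(2n)}_{x_0x_0} = S^{(2n)}_{x_0x_0} = \sum_{y \in X} \left(S^{(n)}_{x_0 y}\right)^2 = \sum_{y \in X} \frac{\kappa(x_0)}{\kappa(y)}\left(k^{(n)}_{x_0 y}\right)^2,
\]
obtained by splitting $S^{(2n)}$ at the midpoint $n$ and using $S^{(n)}_{y x_0} = S^{(n)}_{x_0 y}$.

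Next I would bound the "global" quantity $\sum_y k^{(n)}_{x_0 y}$ that defines $K_w(x_0)$. Since $k^{(n)}_{x_0 y} > 0$ forces a path of length $n$ from $x_0$ to $y$ in $(X,E_K)$, the sum ranges only over $y \in B(x_0,n)$, and there $\kappa(x_0)/\kappa(y) \ge 1/c_n$ by hypothesis. Plugging this lower bound into the identity above gives $\sum_y (k^{(n)}_{x_0 y})^2 \le c_n\, k^{(2n)}_{x_0 x_0}$, and a single application of Cauchy--Schwarz over the ball yields
\[
\sum_{y \in X} k^{(n)}_{x_0 y} \le \sqrt{|B(x_0,n)|}\,\sqrt{\sum_{y \in X}\left(k^{(n)}_{x_0 y}\right)^2} \le \sqrt{|B(x_0,n)|\,c_n}\,\sqrt{k^{(2n)}_{x_0 x_0}}.
\]
Taking $n$-th roots and then $\liminf_n$, the prefactor $(|B(x_0,n)|\,c_n)^{1/(2n)} \to 1$ by the two subexponential-growth hypotheses, so it drops out and leaves $K_w(x_0) \le \liminf_n (k^{(2n)}_{x_0 x_0})^{1/(2n)} \le \limsup_m (k^{(m)}_{x_0 x_0})^{1/m} = K_s(x_0,x_0)$, as desired.

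The only genuinely delicate move is the very first one: recognizing that reversibility turns the nonnegative matrix $K$ into a self-adjoint operator, which is what makes the "diagonal return equals sum of squared off-diagonals" identity available. Everything afterwards is Cauchy--Schwarz together with bookkeeping of exponential rates, and the hypotheses $\sqrt[n]{c_n} \to 1$ and $\sqrt[n]{|B(x_0,n)|} \to 1$ are precisely calibrated to absorb the volume loss incurred by Cauchy--Schwarz. A minor point of care is the passage from the subsequence $(2n)$ to the full $\limsup$ defining $K_s(x_0,x_0)$, but this is harmless since a $\limsup$ along a subsequence never exceeds the full one.
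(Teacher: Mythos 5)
Your proposal is correct and follows essentially the same route as the paper: the identity $k^{(2n)}_{x_0x_0}=\sum_{y\in B(x_0,n)}(k^{(n)}_{x_0y})^2\,\kappa(x_0)/\kappa(y)$ (which the paper reads off directly from $\kappa(x)k^{(n)}_{xy}=\kappa(y)k^{(n)}_{yx}$ rather than via the conjugation $D^{1/2}KD^{-1/2}$, but it is the same computation), followed by Cauchy--Schwarz over the ball and the subexponential hypotheses to absorb the factor $c_n|B(x_0,n)|$, yielding $K_w(x_0)\le K_s(x_0,x_0)$; the converse inequality and the deduction of no pure global survival from $1/K_w(x_0)\le\lambda_w(x_0)\le\lambda_s(x_0)=1/K_s(x_0,x_0)$ are exactly as in the paper.
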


The condition $\sqrt[n]{|B(x,n)|} \to 1$ as $n \to 1$ is usually called \textit{subexponential growth}.
The previous result applies, for instance to BRWs  on
$\Z^d$  or $d$-dimensional combs (see \cite{cf:BZ03} for the definition).
This result extends easily to discrete-time non-oriented BRWs using
$M_w(x)$ 
and $M_s(x,x)$ 
instead of $K_w(x)$
and $K_s(x,x)$ respectively.

\begin{rem}\label{rem:exmp0}
We can apply the previous arguments to the family of rooted trees in Example~\ref{exmp:0}.
$X$ is nonamenable if and only if $T_1$ is nonamenable, that is, if and only if there exists $i$ such that
$m_i\ge 2$. In this case, according to Theorem \ref{th:nonam}, $\lambda_w^{X}< \lambda_s^X$, hence by Remark~\ref{rem:finitegraph}
(considering $X \setminus \widetilde Y$) there exists
$i$ such that $\lambda_w^{T_i}< \lambda_s^{T_i}$.
This means that for all $i= 1, \ldots, b$ we have $\lambda_w^{T_i}<  \lambda_s^{T_i}$ and there is pure
global survival on $T_i$. On the other hand, if $m_i\equiv 1$ for all $i=1,\ldots,b$ the graph has
subexponential growth, then there is no
pure global survival.
\end{rem}

Finally we construct an example of
a continuous-time BRW, where if $\lambda$ is small enough or large enough there is
strong local survival but in a intermediate interval for $\lambda$ there is global and local
survival with different probabilities. This is obtained by modifying the edge-breeding
BRW on a particular graph, namely the homogeneous
tree $\mathbb{T}_d$. The crucial property that we need here is the existence of a pure global survival
phase, thus the procedure applies to every BRW with such a phase.

\begin{exmp}\label{rem:nonstrongandstrong}
Consider the edge-breeding continuous-time BRW on the homogeneous
tree $\mathbb{T}_d$ with degree $d\ge 3$. We know from Example~\ref{exmp:pureglobalsurvival} that
if $\lambda \le 1/d$ the probabilities of survival are $0$,
if $\lambda > 1/2\sqrt{d-1}$ there is strong local survival (according to Proposition~\ref{pro:qtransitive})
and
if $\lambda \in (1/d ,1/2 \sqrt{d-1}]$ the probability
of global survival is positive and
independent from the starting point and the probability of local survival at any finite $A \subseteq X$
is $0$.

Fix $\lambda \in (1/d, 1/2 \sqrt{d-1}]$. According to Remark~\ref{rem:strongconditioned},
there exists  $x \in X$  such that
there is a positive probability of global survival starting from $x$
without visiting $A$. In this case, any modification of the rates in the subset $A$ provides a new BRW such that 
there is still a positive probability of global survival starting from $x$
without ever visiting $A$ (since, the original BRW and the new one coincide until the first hitting time on $A$).
On the other hand, if there is $y \in A$ such that $x \to y$ and we add a loop in $y$
and a rate $k_{yy}>1/\lambda$ then $\bar q(x)<q(x,y)<1$;
the first inequality holds by the discussion above on local modifications
and the second one holds since $\lambda k_{yy}>1$ implies local survival at $y$
(then irreducibility implies local survival at $y$ starting from $x$).
This means that, for this fixed value of $\lambda$, we obtained a locally and globally
(but not strong-locally) surviving BRW at $y$ starting from $x$.

Suppose now that $k_{yy}>d$;
then (see Remark~\ref{rem:finitegraph})
we have a new BRW such that $\lambda_w^\prime=\lambda_s^\prime \le 1/k_{yy}$. In this case, when $\lambda \le \lambda_w^\prime$
there is global extinction.
When $\lambda > 1/2\sqrt{d-1}$ there is strong local survival for the original
BRW (by Proposition~\ref{pro:qtransitive}) which implies strong local survival for the new one
(the probability of hitting $x$ conditioned on global survival is $1$ for both
processes and Remark~\ref{rem:strongconditioned} applies).
If $\lambda \in (\lambda_w^\prime, 1/d]$ there is local and global survival
with the same probability since in order to survive globally,
the process must visit $x$ infinitely many times (it cannot survive globally in the branches of $\mathbb{T}_d$).
If $\lambda \in (1/d, 1/2\sqrt{d-1}]$ then, according to the previous discussion,
there is non-strong local survival for the new BRW.
\end{exmp}

We show that even in the irreducible case, if $\rho_x(0)=0$ for some $x \in X$, we might have
strong local survival starting from some vertices and not from others.

\begin{exmp}\label{exm:irreduciblenotstrongeverywhere}
 Let us consider a modification of the discrete-time counterpart of the edge-breeding BRW on
$\mathbb{T}_d$ with degree $d\ge 3$ and $\lambda \in (1/d, 1/2\sqrt{d-1}]$ . Let us fix
a vertex $y$; in this modified version we add, with probability one,
one child at $y$ for every particle at $y$. In this case $\bar q(y)=q(y,A)=0$ for all $A \subseteq X$.
On the other hand according to the discussion in Example~\ref{rem:nonstrongandstrong},
there is a vertex $y$ such that $\bar q(x)<q(x,y)$.
\end{exmp}

\subsection{An application: BRW locally isomorphic to a branching process}\label{subsec:locisomBP}

This class of BRWs is very easy to study and it gives an immediate
connection with the theory of random walk.
Recall that a BRW $(X,\mu)$ is \textit{locally isomorphic to a branching process}
if and only if  the laws of the offspring number $\rho_x=\rho$ is independent
of $x \in X$ (see Definition~\ref{def:locallyisomorphic}).
In this case the branching process can be constructed as the BRW
$(\{0\}, \nu)$, where $\nu_0:=\rho$.
All the results of this section also apply to continuous-time BRWs
where $\rho_x$ is independent of $x \in X$, since 
their discrete-time
counterpart is  locally isomorphic to a branching process. In particular,
for a continuous-time BRW $(X,K)$, $\rho_x$ is uniquely determined by $k(x) \equiv \sum_{y \in X} k_{xy}$
(and by $\lambda$),
hence $(X;K)$ is locally isomorphic to a branching process if and only if $k(x)$ does not
depend on $x \in X$.

The following result characterizes local and global survival for this class.
Remember the definition of the diffusion matrix given in Section~\ref{subsec:discrete}
as $p(x,y):=m_{xy}/\bar \rho_x$; note that, for a BRW with independent diffusion, 
the diffusion matrix coincides with the transition probability matrix $P$.
In this proposition we denote by
$F$ the generating function of the hitting probabilities,
(cfr.~the function $U$ of \cite[Section 1.C]{cf:Woess09}).

\begin{pro}
 \label{pro:F-BRW1}
Let the BRW be locally isomorphic to a branching process 
and denote
by $\rho$ the common offspring law.
Then
\begin{enumerate}[(1)]
 \item there is global survival if and only if $\bar \rho>1$;
 \item there is local survival at $x$
if and only if $\bar \rho > 1/\limsup_{n \to \infty} \sqrt[n]{p^{(n)}(x,x)}$;
 \item there is local survival at $x$
if and only if either $F(x,x|\bar \rho) >1$ or $F(x,y| \bar \rho)$ diverges.
\end{enumerate}
\end{pro}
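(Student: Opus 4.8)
The plan is to reduce all three statements to facts already available about $M$, $M_s$, $M_w$ and the generating functions $\Gamma,\Phi$ of Section~\ref{subsec:genfun}, exploiting the single structural fact that $\rho_x\equiv\rho$ forces $\bar\rho_x\equiv\bar\rho$ and hence $m_{xy}=\bar\rho\,p(x,y)$ with $P=(p(x,y))$ stochastic. First I would record the consequences $m^{(n)}_{xy}=\bar\rho^{n}p^{(n)}(x,y)$ and $\varphi^{(n)}_{xy}=\bar\rho^{n}f^{(n)}(x,y)$, where $f^{(n)}(x,y)$ are the first-passage (first-return, if $x=y$) probabilities of $P$; the second identity holds because the defining sum for $\varphi^{(n)}_{xy}$ runs exactly over the length-$n$ paths from $x$ to $y$ that avoid $y$ in between, now weighted by $\bar\rho^{n}\prod p$. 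Consequently $\Gamma(x,y|\lambda)=\sum_n p^{(n)}(x,y)(\bar\rho\lambda)^n$ is the Green function of $P$ evaluated at $\bar\rho\lambda$, and $\Phi(x,y|\lambda)=F(x,y|\bar\rho\lambda)$; in particular $\Phi(x,y|1)=F(x,y|\bar\rho)$.

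For (1) I would note that a BRW locally isomorphic to a branching process is, by Definition~\ref{def:locallyisomorphic}, an $\mathcal{F}$-BRW, since it is locally isomorphic to the one-point BRW $(\{0\},\rho)$ and a singleton is finite. Hence Theorem~\ref{th:equiv1global3}(2) applies and global survival starting from $x$ is equivalent to $M_w(x)>1$. As $P$ is stochastic, $\sum_y m^{(n)}_{xy}=\bar\rho^{n}\sum_y p^{(n)}(x,y)=\bar\rho^{n}$, so $M_w(x)=\liminf_n\sqrt[n]{\bar\rho^{n}}=\bar\rho$, giving the claim. (Equivalently, one may observe directly that the total population $\sum_x\eta_n(x)$ is a Galton--Watson process with offspring law $\rho$, whose survival is governed by $\bar\rho>1$.) For (2), Theorem~\ref{th:equiv1local}(1) gives local survival at $x$ iff $M_s(x,x)>1$, and from $m^{(n)}_{xx}=\bar\rho^{n}p^{(n)}(x,x)$ I get $M_s(x,x)=\bar\rho\,\limsup_n\sqrt[n]{p^{(n)}(x,x)}$; rearranging $M_s(x,x)>1$ yields exactly $\bar\rho>1/\limsup_n\sqrt[n]{p^{(n)}(x,x)}$.

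For (3) the starting point is again $M_s(x,x)>1$, which by the discussion following the definition of $\Phi$ in Section~\ref{subsec:genfun} (where $t\mapsto\Phi(x,x|t)$ is strictly increasing and left continuous) is equivalent to $\Phi(x,x|1)>1$, i.e.\ $F(x,x|\bar\rho)>1$; this is the first alternative. To reach the second, I would restrict to the irreducibility class of $x$, which is legitimate because both local survival at $x$ and $M_s(x,x)$ depend only on $\{m_{wy}\}_{w,y\rightleftharpoons x}$; then $P$ restricted to this class is irreducible, and all the $F(x,y|\cdot)$ share the common radius of convergence $1/\limsup_n\sqrt[n]{p^{(n)}(x,x)}$. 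Thus $M_s(x,x)>1$ means precisely that $\bar\rho$ exceeds this radius, which for a power series with nonnegative coefficients is equivalent to the divergence of $F(x,y|\bar\rho)$ for any (hence every) $y$ in the class, and simultaneously forces $F(x,x|\bar\rho)=+\infty>1$; so the two alternatives are equivalent reformulations of the same condition.

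The hard part will be the converse half of (3): one must rule out that $F(x,y|\bar\rho)$ diverges while $\bar\rho$ sits exactly at the radius $1/\limsup_n\sqrt[n]{p^{(n)}(x,x)}$, i.e.\ the critical case $M_s(x,x)=1$ in which there is no local survival. Here I would use the factorizations $\Gamma(x,x|\lambda)=1/(1-\Phi(x,x|\lambda))$ (already recorded in Section~\ref{subsec:genfun}) and its off-diagonal companion $\Gamma(x,y|\lambda)=\Phi(x,y|\lambda)\,\Gamma(y,y|\lambda)$, which follows from the same first-passage decomposition $m^{(n)}_{xy}=\sum_{k}\varphi^{(k)}_{xy}m^{(n-k)}_{yy}$. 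Evaluating at $\lambda=1$, the first shows $F(x,x|\bar\rho)=\Phi(x,x|1)\le1$ at the radius, while the second shows $F(x,y|\bar\rho)=\Gamma(x,y|1)/\Gamma(y,y|1)$ remains finite there, both when $\Gamma(y,y|1)=\infty$ (forcing $\Phi(y,y|1)=1$ and $\Phi(x,y|1)\le1$) and when $\Gamma(y,y|1)<\infty$ (giving a finite quotient). Verifying this boundary finiteness, and checking that the off-diagonal factorization is valid verbatim in the BRW generating-function setting, is the only genuinely delicate point; everything else is the bookkeeping above.
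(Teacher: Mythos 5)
Your proposal is correct and follows essentially the same route as the paper: (1) and (2) are read off from Theorems~\ref{th:equiv1global3}(2) and \ref{th:equiv1local}(1) via $m^{(n)}_{xy}=\bar\rho^{\,n} p^{(n)}(x,y)$, and (3) from the identity $\Phi(x,y|\lambda)=F(x,y|\bar\rho\lambda)$ --- the paper's own proof is precisely this three-line reduction and does not even address the divergence alternative in (3). Your extra care on that point is welcome; the only quibble is the sub-claim that $\Gamma(y,y|1)=\infty$ forces $\Phi(x,y|1)\le 1$, which does not follow as stated --- what you actually need (and what is true) is finiteness of $\Phi(x,y|\cdot)$ at the common convergence radius, which one gets from the inequality $\Phi(x,y|\lambda)\,\Phi(y,x|\lambda)\le 1$ (insert the excursion $x\to y\to x$ into $\Gamma(x,x|\lambda)$) together with $\Phi(y,x|\lambda)>0$ by irreducibility.
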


A sketch of the proof can be found in Section~\ref{sec:proofs}.
We note that there is local survival at $x$ if and only if
\[
 \bar \rho > \max\{t \in \mathbb{R}: F(x,x| t) \le 1\} \equiv r(x,x)
\]
where $r(x,x)=1/\limsup_{n \to \infty} \sqrt[n]{p^{(n)}(x,x)}$ is the spectral radius 
of the random walk $P$ 
(see \cite[Section 2.C]{cf:Woess09}).
As a corollary one derives the critical parameters
for continuous-time BRWs which are locally isomorphic to a branching process:
$\lambda_w=1/k$ and $\lambda_s(x)=r(x,x)/k$ (where $k=k(x)$ for all $x \in X$).

It is clear that, in the irreducible case, there is pure global survival (see Section~\ref{subsec:pureweak}) if and only if
$1< \bar \rho \le r$ (where $r=r(x,x)$ in this case does not depend on $x \in X$ due to the irreducibility).
This is possible if and only if $r>1$ which is equivalent to nonamenability (see Theorem~\ref{th:nonam})
since in this case
$
M_s(x,y)= \bar \rho / r$ and
$M_w(x) 
=\bar \rho$.
Notice that if there is pure global survival then $P$ defines a transient random walk but the converse is not true:
if $P$ is the simple random walk on $\Z^d$ there is no pure global survival for any $\bar \rho$ and $d$.

In general there is no strong local survival, even if the BRW has independent diffusion 
as Examples~\ref{ex:nonstronglocalFBRW} and \ref{ex:nonstronglocalFBRW2} show.
Before discussing the examples we need an easy lemma, whose proof can be found in Section~\ref{sec:proofs}.

\begin{lem}
\label{lem:test0}
 Let $\{\alpha_i\}_{i \in \N}$ and $\{k_i\}_{i\in \N}$ be such that $\alpha_i \in (-\infty, 1)$ and
$k_i \ge 0$ for all $i \in \N$. Then
\[
        \sum_{i \in \N} k_i \alpha_i < +\infty \Longleftarrow \prod_{i \in \N} (1-\alpha_i)^{k_i}>0.
       \]
Moreover if $\alpha_i \in [0, 1)$ and $k_i \ge 1$ eventually as $i \to \infty$ then
\[
        \sum_{i \in \N} k_i \alpha_i < +\infty \Longleftrightarrow \prod_{i \in \N} (1-\alpha_i)^{k_i}>0.
       \]
\end{lem}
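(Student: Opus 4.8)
The plan is to pass to logarithms and recognize both conditions as statements about the convergence of a single series. Since $\alpha_i<1$ forces every factor $1-\alpha_i$ to be strictly positive, one may write $\prod_{i \in \N}(1-\alpha_i)^{k_i} = \exp\big(\sum_{i \in \N} k_i \log(1-\alpha_i)\big)$, so that the condition $\prod_{i}(1-\alpha_i)^{k_i}>0$ is equivalent to $\sum_{i} k_i\log(1-\alpha_i) > -\infty$, i.e.\ to $\sum_i k_i\big(-\log(1-\alpha_i)\big) < +\infty$. The whole lemma then reduces to a comparison between the two series $\sum_i k_i\alpha_i$ and $\sum_i k_i(-\log(1-\alpha_i))$, governed by elementary estimates on the map $t \mapsto -\log(1-t)$.

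For the first (one-directional) claim I would use $1-t \le e^{-t}$, valid for every real $t$, equivalently $-\log(1-t) \ge t$ for $t<1$. Since $k_i \ge 0$, this yields $\sum_{i \le N} k_i \alpha_i \le \sum_{i\le N} k_i(-\log(1-\alpha_i)) = -\log \prod_{i \le N}(1-\alpha_i)^{k_i}$ for every $N$. If $\prod_i(1-\alpha_i)^{k_i}>0$ the right-hand side stays bounded above as $N \to \infty$, so the partial sums of $\sum_i k_i\alpha_i$ are bounded above and hence $\sum_i k_i\alpha_i < +\infty$. This direction uses no sign restriction on the $\alpha_i$ beyond $\alpha_i<1$.

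For the equivalence under the stronger hypotheses $\alpha_i \in [0,1)$ and $k_i \ge 1$ eventually, only the reverse implication remains, since the forward one is the case just treated. Assume $\sum_i k_i\alpha_i < +\infty$. Now every term $k_i\alpha_i$ is nonnegative, so convergence forces $k_i\alpha_i \to 0$, and since $k_i \ge 1$ for large $i$ this gives $\alpha_i \to 0$; in particular $\alpha_i \le 1/2$ eventually. On $[0,1/2]$ one has the matching bound $-\log(1-t) \le t/(1-t) \le 2t$, hence $k_i(-\log(1-\alpha_i)) \le 2k_i\alpha_i$ for all large $i$. Summing the tail gives $\sum_i k_i(-\log(1-\alpha_i)) < +\infty$ (the finitely many initial terms are finite because each $\alpha_i<1$), which by the logarithmic reformulation is exactly $\prod_i(1-\alpha_i)^{k_i}>0$.

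The only delicate point is precisely where the extra hypotheses enter. Without $\alpha_i \ge 0$ and $k_i \ge 1$ eventually one cannot guarantee $\alpha_i \to 0$, and then $-\log(1-\alpha_i)$ need not be comparable to $\alpha_i$, because the factor $1/(1-\alpha_i)$ blows up as $\alpha_i \to 1$; in that regime the reverse implication can genuinely fail. I would therefore isolate the deduction $\alpha_i \to 0$ from the convergence of $\sum_i k_i\alpha_i$ together with $k_i \ge 1$, as this is the single step that upgrades the one-sided estimate into a two-sided one, and hence the implication into an equivalence.
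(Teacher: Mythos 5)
Your proposal is correct and follows essentially the same route as the paper: pass to logarithms, use $\log(1-x)\le -x$ for the one-sided implication, and for the equivalence deduce $\alpha_i\to 0$ from either side and compare $-\log(1-\alpha_i)$ with $\alpha_i$ (you via the explicit bound $-\log(1-t)\le 2t$ on $[0,1/2]$, the paper via the asymptotic $\log(1-\alpha_i)\sim-\alpha_i$). No gaps.
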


\begin{exmp}\label{ex:nonstronglocalFBRW}
Fix $X:=\N$ and consider a BRW with the following reproduction probabilities.
Every particle has two children with probability $3/4$ and no children with probability $1/4$. Each newborn
particle is dispersed independently according to a nearest neighbor matrix $P$ on $\N$.
More precisely
\[
 p(i,j):=
\begin{cases}
 p_i & \textrm{ if } j=i+1\\
1-p_i  & \textrm{ if } j=i-1,\\
\end{cases}
\]
and $p_0=1$.
The process described above is
an irreducible $\mathcal F$-BRW for every choice of the set $\{p_i\}_{i \in \N \setminus \{0\}}$.

The generating function of the total number of children is $z \mapsto 3 z^2/4+1/4$ and its minimal
fixed point is $1/3=\bar q(x)$ (for all $x \in \N$).

Choose $p_1 < 5/9$; it is easy to show that the process confined to $\{0,1\}$ survives
(since the expected number of children at $0$ every two generations (starting from $0$)
is $ (3/2)^2 (1-p_1)>1$). By irreducibility this implies that $q(x,y)<1$ and $\bar q(x)<1$ for all $x,y \in \N$.

Choose the $p_i$s such that $\prod_{i=1}^\infty p_i^{2^i}>0$,
which, according to Lemma~\ref{lem:test0}, is equivalent to $\sum_{i=1}^\infty 2^i (1-p_i)< +\infty$.
Consider the branching process $N_n$ representing the total number of particles alive at time $n$:
for all $n$,  $N_n \le 2^n$ almost surely.
The probability, conditioned on global survival, that every particle places its children (if any) to its right,
is the conditioned expected value of $\prod_{i=1}^\infty p_i^{N_i}$.
But
$\prod_{i=1}^\infty p_i^{N_i} \ge \prod_{i=1}^\infty p_i^{2^i}>0$ almost surely.
 Hence, conditioning on global survival
there is a positive probability of non-local survival. This implies $q(\cdot, y) \not = \bar q$
for every $y \in \N$. Note that, according to Theorem~\ref{th:Fgraph}, $\sup_{x \in \N} q(x,x)=1$.
\end{exmp}

The key in the previous example is that the total number of particles alive at time $n$ is bounded.
This is not an essential assumption. The following example shows that, given any law $\rho$ of a
surviving
branching process (that is, $\bar \rho=\sum_{n\ \in \N} \rho(n)>1$), it is possible to construct
an irreducible BRW which is locally isomorphic to a branching process with no strong local survival.

\begin{exmp}\label{ex:nonstronglocalFBRW2}
Let $X=\N$ and $\rho_x:=\rho$ for all $x \in \N$; $\rho$ being the law of a surviving branching process.
We know that $\bar q(x) \equiv \bar q$ for all $x \in \N$
where $\bar q<1$ is the smallest fixed point of $z \mapsto \sum_{n\ in \N} \rho(n) z^n$.
Pick a sequence of natural numbers
$\{N_i\}_{i \in \N}$ satisfying
\begin{equation} \label{eq:defineNi}
\prod_{i \in \N} \rho([0,N_{i+1}])^{\prod_{j=0}^i N_j} >\bar q,
\end{equation}
where $N_0:=1$.
Note that the probability of the event $\mathcal{A}$ where every particle alive at time $i$ has at most $N_{i+1}$ children
for all $i \in \N$ is bounded from below by the LHS of equation~\eqref{eq:defineNi}.
Thus, from equation~\eqref{eq:defineNi}, with a probability larger than
$\prod_{i \in \N} \rho([0,N_{i+1}])^{\prod_{j=0}^i N_j} -\bar q>0$ the colony survives globally and
the total size of the population at time $n$ is not larger than $\prod_{j=0}^n N_j$
(i.e.~the intersection between $\mathcal{A}$ and global survival has positive probability).

We define a BRW with independent diffusion 
where $P$ is as follows
\[
 p(i,j)
:=
\begin{cases}
 p_i &  j=i+1,\, i \ge 0\\
1-p_i & j=i-1, \, i \ge 1\\
1-p_0 & i=j=0.\\
\end{cases}
\]
Let $p_0$ such that $(1-p_0)\bar \rho>1$; this implies local survival.
We choose the sequence $\{p_i\}_{i \in \N}$, where $p_i \in (0,1)$ in such a way that
\begin{equation} \label{eq:definepi}
\prod_{i \in \N} p_i^{\prod_{j=0}^i N_j} >0.
\end{equation}
Using equation~\eqref{eq:definepi}, if we condition on $\mathcal A$,
the probability that, every particle places its children (if any) to its right is bounded from
below by $\prod_{i \in \N} p_i^{\prod_{j=0}^i N_j}$.
This implies that there is a positive probability of global, non-local survival.

The choice of the sequences $\{N_i\}_{i \in \N}$ and $\{p_i\}_{i \in \N}$ satisfying equations~\eqref{eq:defineNi} and
\eqref{eq:definepi} respectively can be done as follows.
Choose a sequence $\{\alpha_i\}_{i \in \N}$ such that $\alpha_i \in (0,1)$ for all $i \in \N$ and
$\prod_{i \in \N} \alpha_i >1-\bar q$. Then, iteratively, if we fixed $N_0, \ldots, N_k$, since $\lim_{x \to \infty}\rho([0,x])=1$
there exists $N_{k+1} \in \N$ such that $\rho([0,N_{k+1}])>\alpha_{k+1}^{1/\prod_{j=0}^k N_j}$.
Moreover, according to Lemma~\ref{lem:test0}, equation~\eqref{eq:definepi}, is equivalent to
$\sum_{i \in \N} (1-p_i){\prod_{j=0}^i N_j}< \infty$,
hence let us take, for instance, $p_i > 1/(i \cdot {\prod_{j=0}^i N_j})$.

We note that the class constructed in this example includes
discrete-time counterparts of continuous-time BRWs where
$\rho$ can be chosen as in equation~\eqref{eq:counterpart} where $k(x)$ does not depend on $x$,
$k_{xy}:=k(x)p(x,y)$ (where $P$ is defined as before) and $\lambda>\lambda_s$ is fixed.
Finally we observe that this example extends naturally to an example
of a site-breeding BRW on a radial tree where the number of branches of a vertex
at distance $k$ from the root is at least $1/p(k,k+1)$.
\end{exmp}

The following easy theorem gives another sufficient condition for the strong local survival
of a BRW which is locally isomorphic to a branching process. The proof can be found in Section~\ref{sec:proofs}.

\begin{teo}\label{th:recurrent}
Suppose that $(X,\mu)$ is an irreducible BRW with independent diffusion
such that
$P$ is the transition matrix of a recurrent random walk
and $\rho_x=\rho$ for all $x \in X$. 
Then, global survival starting from some $x \in X$ implies strong local survival at $y$ starting from $w$
for  all $w,y \in X$.
\end{teo}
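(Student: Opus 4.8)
The plan is to pass to the no-death BRW $\{\widehat\eta_n\}$ associated to $\{\eta_n\}$ (Section~\ref{subsec:nodeath}), to prove that this auxiliary process survives locally with probability one at every vertex, and to read the conclusion off the identity relating the two processes. I would first record two consequences of the hypotheses. Because $\rho_x\equiv\rho$, the total population $N_n:=\sum_{x}\eta_n(x)$ is, irrespective of the starting site, a Galton--Watson process with offspring law $\rho$; hence $\bar q(x)\equiv\bar\alpha$ is constant, equal to the extinction probability of this Galton--Watson process, and global survival starting from some $x$ forces $\bar\rho>1$, so that $\bar q(w)<1$ for every $w$. Next, since $P$ is an irreducible recurrent stochastic matrix, $\sum_n p^{(n)}(x,x)=\infty$ while $p^{(n)}(x,x)\le 1$, whence $\limsup_n\sqrt[n]{p^{(n)}(x,x)}=1$; in particular every nonempty subset of $X$ is visited infinitely often by the random walk $(X,P)$.

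Now consider $\{\widehat\eta_n\}$. Using $\widehat G=T^{-1}_{\bar q}\circ G\circ T_{\bar q}$ from equation~\eqref{eq:G-one}, together with the independent-diffusion form $G(z|x)=F(Pz(x))$ with constant offspring generating function $F$ and the fact that $\bar q$ is the constant $\bar\alpha$, one computes $\widehat G(z|x)=\widehat F(Pz(x))$ where $\widehat F(u):=(F(\bar\alpha+(1-\bar\alpha)u)-\bar\alpha)/(1-\bar\alpha)$. Since $F(\bar\alpha)=\bar\alpha$ we get $\widehat F(0)=0$ and $\widehat F'(1)=F'(1)=\bar\rho$, so $\{\widehat\eta_n\}$ is again an irreducible BRW with independent diffusion, with the \emph{same} (recurrent) diffusion matrix $P$ and with a position-independent, no-death offspring law of mean $\bar\rho>1$; in particular it is locally isomorphic to a branching process, hence an $\mathcal F$-BRW. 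The key gain here is that, because the extinction probability of the original process is position-independent, passing to the backbone does not distort the spatial dynamics.

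It remains to show that $\{\widehat\eta_n\}$ survives locally with probability one at every vertex. Since $M^{\widehat\eta}=\bar\rho P$ we have $M^{\widehat\eta}_s(x,x)=\bar\rho\,\limsup_n\sqrt[n]{p^{(n)}(x,x)}=\bar\rho>1$, so by Theorem~\ref{th:equiv1local}(1) the process $\widehat\eta$ survives locally with positive probability at each vertex, i.e. $\widehat q(x_0,x_0)<1$ for a fixed $x_0$. Putting $c:=1-\widehat q(x_0,x_0)>0$, the set $C=\{x:\widehat q(x,x)\le 1-c\}$ is nonempty; as $P$ is recurrent, $C$ is recurrent for $(X,P)$, so in the independent-diffusion reformulation noted after Proposition~\ref{pro:Mueller08} the process $\widehat\eta$ visits $C$ infinitely often, and Proposition~\ref{pro:Mueller08} yields strong local survival of $\widehat\eta$, namely $\widehat q(w,y)=0$ for all $w,y$. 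Finally, by the identity of Section~\ref{subsec:nodeath} the probability that $\widehat\eta$ survives locally at $y$ starting from $w$ equals $(1-q(w,y))/(1-\bar q(w))$; this being $1$ gives $q(w,y)=\bar q(w)<1$, i.e. strong local survival at $y$ starting from $w$ for all $w,y\in X$.

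The delicate point is the upgrade from ``local survival with positive probability'' to ``local survival with probability one'', and this is precisely where recurrence of $P$ is indispensable. By Theorem~\ref{th:Fgraph} applied to the $\mathcal F$-BRW $\widehat\eta$, a priori either $\widehat q(\cdot,y)\equiv 0$ (what we want) or $\sup_w\widehat q(w,y)=1$, and recurrence is exactly what rules out the second alternative. I would also record the intuitive spine version of this step, which could replace the appeal to Proposition~\ref{pro:Mueller08}: since $\bar q\equiv\bar\alpha$ is position-independent, choosing an infinite line of descent does not bias its spatial increments, so the spine is an honest $P$-random walk and, being recurrent, hits every $y$ infinitely often; hence on the survival event $y$ carries particles at arbitrarily large times.
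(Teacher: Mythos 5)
Your proof is correct, but it takes a genuinely different route from the paper's. The paper argues directly on the fixed points of $G$: since $G(z|x)=F(Pz(x))$ with $F$ independent of $x$ and $F(t)\le t$ on $[\bar q,1]$, every fixed point $z$ of $G$ satisfies $z\le Pz$, i.e.\ it is a bounded subharmonic function for $P$; recurrence forces such functions to be constant, so the only fixed points are $\bar q\,\mathbf 1$ and $\mathbf 1$, and since local survival (from $M_s(x,x)=\bar\rho>1$) excludes $q(\cdot,y)=\mathbf 1$, Remark~\ref{rem:strongconditioned} gives $q(\cdot,A)=\bar q\,\mathbf 1$. You instead pass to the Harris backbone of Section~\ref{subsec:nodeath}, check --- using crucially that $\bar q$ is constant --- that it is again an irreducible BRW with independent diffusion over the \emph{same} recurrent $P$, with no death and mean offspring $\bar\rho>1$, and then combine Theorem~\ref{th:equiv1local}(1) with Proposition~\ref{pro:Mueller08}. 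Both arguments use recurrence at exactly one point: for you it guarantees that the set $C$ is visited infinitely often, which is what Müller's criterion needs to upgrade positive-probability local survival to almost sure local survival of the backbone; for the paper it kills non-constant bounded subharmonic functions. The paper's route is shorter and self-contained, resting only on the classical subharmonic characterization of transience; yours is longer but makes the probabilistic mechanism transparent (a supercritical no-death backbone whose spine is an honest recurrent $P$-walk) and reuses the machinery of Sections~\ref{subsec:nodeath} and~\ref{subsec:locisomBP}, at the price of leaning on the external Proposition~\ref{pro:Mueller08}. Two minor caveats: your claim that global survival forces $\bar\rho>1$ implicitly invokes Assumption~\ref{assump:1} to exclude $\rho=\delta_1$ (in that degenerate case the conclusion holds trivially anyway), and the identification of the conditioned backbone as a bona fide BRW with generating function $\widehat G$ is precisely the assertion the paper states without full proof in Section~\ref{subsec:nodeath}, so your argument inherits that dependence.
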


In case of a BRW with no death and with independent diffusion 
one can prove the following proposition which makes use of Proposition~\ref{pro:Mueller08}.
By $U(x,y|z)$ we mean the usual generating function of the first-return probabilities of the
random walk $P$ as defined in \cite[equation (1.26)]{cf:Woess09}; in particular
$U(x,y)$ is the probability of visiting $y$ \textit{after} starting from $x$.

\begin{pro}\textbf{\cite[Proposition 3.6]{cf:Muller08-2}}\label{cor:Mueller08}
 Let $(X,\mu)$ be an irreducible BRW with independent diffusion 
where $\rho_x=\rho$ for all $x \in X$ and $\rho(0)=0$.
If $\bar \rho > \sup_{x \in X} 1/U(x,x)$ then there is strong local survival.
\end{pro}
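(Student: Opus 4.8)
The plan is to convert the local-survival question into a statement about the random walk $P$ and then feed a suitable recurrent set into Proposition~\ref{pro:Mueller08}. Since the BRW has independent diffusion with a site-independent offspring law, one has $m^{(n)}_{xy}=\bar\rho^{\,n}p^{(n)}(x,y)$, and the taboo quantities factor accordingly as $\varphi^{(n)}_{xx}=\bar\rho^{\,n}u^{(n)}(x,x)$, where $u^{(n)}(x,x)$ is the $n$-step first-return probability of $P$ (so that $U(x,x|z)=\sum_{n\ge 1}u^{(n)}(x,x)z^n$). Summing gives $\Phi(x,x|1)=\sum_{n}\varphi^{(n)}_{xx}=U(x,x|\bar\rho)$. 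By Theorem~\ref{th:equiv1local}(1) and the identity $M_s(x,x)>1\Longleftrightarrow\Phi(x,x|1)>1$ recorded in Section~\ref{subsec:genfun}, local survival at $x$ is equivalent to $U(x,x|\bar\rho)>1$.

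Next I would make this uniform in $x$. Writing $\beta:=\inf_x U(x,x)=1/\sup_x\bigl(1/U(x,x)\bigr)>0$, the hypothesis reads $\bar\rho\,\beta>1$, and since $\rho(0)=0$ forces $\bar\rho\ge 1$ we have $\bar\rho^{\,n}\ge\bar\rho$ for $n\ge1$, whence
\[
U(x,x|\bar\rho)=\sum_{n\ge 1}u^{(n)}(x,x)\,\bar\rho^{\,n}\ge\bar\rho\sum_{n\ge 1}u^{(n)}(x,x)=\bar\rho\,U(x,x)\ge\bar\rho\,\beta>1
\]
for every $x\in X$. Thus local survival holds at every vertex ($q(x,x)<1$ for all $x$), and moreover the embedded first-return Galton--Watson process at $x$ (whose offspring is the number of descendants of a particle at $x$ that reach $x$ for the first time, and whose extinction probability is exactly $q(x,x)$) has mean $\Phi(x,x|1)\ge\bar\rho\,\beta>1$ bounded away from $1$ uniformly in $x$.

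Now I would invoke the dichotomy. A BRW locally isomorphic to a branching process is an $\mathcal F$-BRW, so Theorem~\ref{th:Fgraph} applies in the irreducible case: either $q(x,x)=\bar q(x)$ for all $x$, or $\sup_x q(x,x)=1$. Since $\rho(0)=0$ gives $\bar q=\mathbf 0$, the first alternative is precisely the desired strong local survival ($q(x,x)=0$ for all $x$, hence $q(w,y)=0$ for all $w,y$). It therefore suffices to rule out $\sup_x q(x,x)=1$, i.e.\ to produce $c>0$ such that $C:=\{x:q(x,x)\le 1-c\}$ is visited infinitely often by the BRW; Proposition~\ref{pro:Mueller08} then upgrades this to full strong local survival. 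When $P$ is recurrent this is immediate (any fixed vertex lies in some such $C$, which is then visited infinitely often by a single walker, recovering Theorem~\ref{th:recurrent}); the content of the statement is the possibly transient case, where the cleanest route is to show $\sup_x q(x,x)<1$ so that one may take $C=X$, which is automatically visited infinitely often because with $\rho(0)=0$ the population never dies out.

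The main obstacle is exactly this last uniform bound $\sup_x q(x,x)<1$. A uniform lower bound on the first-return \emph{means} is not by itself enough, since the first-return offspring are unbounded---long, heavily branching excursions away from $x$ carry the large-$n$ terms, weighted by $\bar\rho^{\,n}$---and the extinction probability of a supercritical Galton--Watson law is not controlled by its mean alone. The plan is to dominate the first-return process from below by a law with \emph{uniformly bounded} offspring that remains strictly supercritical: truncate the per-particle offspring at a level $N$ with $\bar\rho_N\,\beta>1$ (possible as $\bar\rho_N\uparrow\bar\rho$), and control the excursion contributions by a second-moment (Paley--Zygmund) estimate on the truncated return counts, using the uniform return probability $\beta$ and the no-death property to keep the constants independent of $x$; a compactness argument for extinction probabilities of bounded-offspring, uniformly supercritical laws then yields $\sup_x q(x,x)\le 1-c<1$. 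I expect the genuinely delicate point to be reconciling the unbounded excursion lengths with an estimate that is uniform in $x$, since truncating the excursion horizon discards precisely the heavily weighted large-$n$ returns; overcoming this uniformity issue is where the hypothesis $\bar\rho>\sup_x 1/U(x,x)$ must be used in full strength.
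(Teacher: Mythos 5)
Your overall architecture is the one the paper intends: Proposition~\ref{cor:Mueller08} is stated there as a corollary of Proposition~\ref{pro:Mueller08} (the paper gives no proof of its own, deferring to M\"uller), and the reduction you set up --- local survival at every $x$ via $\Phi(x,x|1)=U(x,x|\bar\rho)\ge\bar\rho\,U(x,x)>1$, then feeding $C=X$ into Proposition~\ref{pro:Mueller08}, using $\rho(0)=0$ to get that $X$ is visited infinitely often --- is exactly right. But the proof is not complete: everything hinges on the uniform bound $\sup_{x}q(x,x)<1$, and you do not establish it. You correctly observe that a uniform lower bound on the mean of the first-return offspring law does not control its extinction probability, and the repair you sketch (truncate the offspring at level $N$, control excursions by a second-moment/Paley--Zygmund estimate, then a compactness argument) is both uncarried-out and unlikely to work as stated: the second moment of the number of first-time returns to $x$ is a sum of terms weighted by $\bar\rho^{\,n}$ over excursions of unbounded length and need not be finite, let alone uniformly bounded in $x$; and truncating the excursion horizon at $L$ replaces $U(x,x)$ by $\sum_{n\le L}u^{(n)}(x,x)$, which need not stay above any fixed $\beta'>1/\bar\rho$ uniformly in $x$. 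You flag this yourself as the ``genuinely delicate point,'' which is an honest admission that the proof is open at its decisive step.

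The missing idea is to embed a \emph{smaller} Galton--Watson process whose offspring law is uniformly dominated from below, rather than to analyze the full first-return process. Since $\rho(0)=0$, every particle has at least one child; from each of the $N\sim\rho$ children of a particle at $x$, follow a single distinguished line of descent (always pass to one fixed child). Because the diffusion is independent, each such line is a $P$-random walk issued from $x$ after one step, so it returns to $x$ with probability $U(x,x)$, independently across the $N$ children. The particles at these first returns form one generation of a genuine Galton--Watson process whose offspring law is the $U(x,x)$-thinning of $\rho$, which stochastically dominates the $\beta$-thinning of $\rho$ with $\beta:=\inf_x U(x,x)$; the latter is a fixed law with mean $\bar\rho\,\beta>1$, hence has extinction probability $q^*<1$. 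Since the BRW visits $x$ at least as often as this embedded process has particles, $q(x,x)\le q^*$ for every $x$, which is the uniform bound you need; no second moments and no horizon truncation are involved, and this is precisely where the hypothesis $\bar\rho>\sup_x 1/U(x,x)$ is consumed. With that bound in hand, your appeal to Proposition~\ref{pro:Mueller08} with $C=X$ (or, alternatively, to the dichotomy of Theorem~\ref{th:Fgraph} together with $\bar q=\mathbf 0$) finishes the argument as you describe.
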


A result like this one could be proved without the no-death hypothesis
using the comparison described in
Section~\ref{subsec:nodeath}; in this case a reasonable hypothesis should be
$\bar \rho > \sup_{x \in X} 1/U(x,x|1-\rho(0))$.

\begin{rem}
 \label{rem:dominatingBP}
In order to extend some results, as Proposition~\ref{cor:Mueller08} for instance, from
the homogeneous case ($\rho_x=\rho$ for all $x \in X$) to the inhomogeneous case,
we have to find sufficient conditions such that the infimum of the probabilities of
survival of the branching processes with laws $\{\rho_x\}_{x \in X}$ is strictly larger than 0.
Observe that if $\rho \succeq \widehat \rho$ then for all nondecreasing, positive,
measurable function $f$ we have $\sum_{n \in \N} f(n) \rho(n) \ge
\sum_{n \in \N} f(n) \widehat \rho(n)$ which, in turn,
implies $\sum_{n \in \N} z^n \rho(n)
\le \sum_{n \in \N} z^n \widehat \rho(n)$ for all $z \in [0,1]$.
Hence if $\sum_{n \in \N} z^n \widehat \rho(n) \le z$ then $\sum_{n \in \N} z^n \rho(n)\le z$.
Thus if  $\rho_x \succeq \widehat \rho$ for all $x \in X$
then the probabilities of extinction of the branching processes associated to
the $\rho_x$s are all dominated by the probability
of extinction of the branching process associated to $\widehat \rho$.
Hence one possibility would be to assume that for all $x \in X$,
$\widehat \rho_x$ dominates some law of a surviving branching process $\widehat \rho$. This way the results
for inhomogeneous BRWs are simply corollaries of the homogeneous case. Clearly this is not a
significant improvement.

One might guess that if the expected number of offsprings $\bar \rho_x$ is sufficiently large then
 the supremum of the probabilities of
extinction of the branching processes with laws $\{\rho_x\}_{x \in X}$ is strictly smaller than 1.
But clearly, if $\sup_{x \in X} \rho_x(0)=1$ then the supremum of the probabilities of
extinction is $1$. Even bounding $\rho_x(0), \ldots, \rho_x(k)$ (for all $x \in X$) is not sufficient.
Indeed consider the set $\mathcal A$ of all probability generating functions
and a subset $\mathcal A_{a_0,a_1; \ldots, a_k,m}$ defined as
\[
 \begin{split}
  \mathcal A&:=\{\phi(z)=\sum_i \rho(i) z^i: \rho(i) \ge 0 \ \forall i, \ \sum_i \rho(i)=1\}\\
\mathcal A_{a_0,a_1, \ldots, a_k,m}&:=\{\phi\in \mathcal A:\phi(z)=\sum_i \rho(i) z^i, \  \rho(i) \le a_i \
\forall i \le k, \ \bar \rho  
\ge m\}\\
 \end{split}
\]
(where $a_i \in [0, 1]$ and $m \in \mathbb R$, $m>1$).
In this case, either there is a surviving branching process
(which does not necessarily belong to $\mathcal A_{a_0,a_1, \ldots, a_k,m}$)
whose law is stochastically dominated by all
$\rho \in \mathcal A_{a_0,a_1, \ldots, a_k,m}$
or the supremum of the probabilities of extinction is $1$.


In order to prove this claim,
let $k_0:= \max \{i=0, \ldots, k: \sum_{j=0}^k a_i < 1\}$
and 
define $\widehat \phi(z):= \sum_{i=0}^{k_0} a_i z^i+ (1-\sum_{i=0}^{k_0} a_i)z^{k_0+1}$.
Clearly the branching process corresponding to $\widehat \phi$ (which might not belong
to $\mathcal A_{a_0,a_1, \ldots, a_k,m}$) is dominated
by every branching process in $\mathcal A_{a_0,a_1, \ldots, a_k,m}$
and $\widehat \phi \le \phi$ in $[0,1]$ for every $\phi \in \mathcal A_{a_0,a_1, \ldots, a_k,m}$.
This implies that the minimal fixed point, $c \le 1$, of $\widehat \phi$ is an upper bound
of the minimal fixed point of
$\phi \in \mathcal A_{a_0,a_1, \ldots, a_k,m}$.
If $c<1$, that is, $\sum_{i=0}^{k_0} ia_i + (1-\sum_{i=0}^{k_0} a_i)(k_0+1)>1$
(i.e.~the branching process survives) there is nothing to prove.
On the other hand, if $c=1$ then consider
\[
\phi_N=\sum_{i=0}^{k_0} a_i z^i+ w_N z^{k_0+1} +(1-w_N-\sum_{i=0}^{k_0} a_i)z^N
\]
where
$w_N=(\sum_{i=0}^{k_0} i a_i - m +(1-\sum_{i=0}^{k_0} a_i)N)/(N-(k_0+1)) \in [0, 1-\sum_{i=0}^{k_0} a_i]$ eventually
as $N \to \infty$
(remember that $c=1$ if and only if $1 \ge \frac{\diff}{\diff z} {\widehat \phi}(z)|_{z=1}
\equiv \sum_{i=0}^{k_0} ia_i + (1-\sum_{i=0}^{k_0} a_i)(k_0+1)$).
Then $\frac{\diff}{\diff z} \phi_N(z)|_{z=1}=m$, $\phi_N \in \mathcal A_{a_0,a_1, \ldots, a_k,m}$
and $\lim_{N \to \infty} \phi_N \equiv \widehat \phi$.
It is straightforward to show that the minimal fixed point $c_N$ of $\phi_N$ converges
to $c=1$ as $N \to \infty$.


\end{rem}

\section{Approximation}
\label{sec:approx}

\subsection{Spatial approximation}
\label{subsec:spatial}

The first kind of approximation is based upon a result on approximation of nonnegative matrices
which is interesting in itself.
Recall the usual classification of indices of a matrix $M=(m_{xy})_{x,y \in X}$ (which is supposed to be
nonnegative throughout this section)
as described in \cite[Chapter 1]{cf:Sen}.
For any index $x$ we denote by $[x]$ its \textit{class}, that is, the set of indices
which communicate with $x$.
We define the convergence parameters $R(x,y):=M_s(x,y)
^{-1}$
and $R:= \inf_{x,y \in X} R(x,y)$.
It is straightforward to show that $M_s(x,y)=M_s(x_1,y_1)$
if
$[x]=[x_1]$ and $[y]=[y_1]$; 
this implies that for irreducible matrices, $R(x,y)$ is independent of $x,y \in X$.

Let $\{X_n\}_{n \in \N}$  be a 
sequence of subsets of $X$ 
and denote by $_nR$ the convergence parameter of  $M_n=(m_{xy})_{x,y \in X_n}$;
clearly, if the sequence $\{X_n\}_{n \in \N}$ is nondecreasing, we have that $_nR \geq {_{n+1}R}$.
The following theorem  generalizes \cite[Theorem 6.8]{cf:Sen}
(note that the submatrices $\{M_n\}_{n \in \N}$ are not necessarily irreducible); it
is the key to prove our main
result about spatial approximation (Theorem~\ref{th:spatial}).

\begin{teo}\textbf{\cite[Theorem 5.1]{cf:Z1}}\footnote{We observe that in \cite[Section 5.1]{cf:Z1}
the hypotheses that $M$ is a nonnegative matrix is missing, even though it is implicitly used.}\label{th:genseneta}
 Let $\{X_n\}_{n \in \N}$  be a general sequence of subsets of $X$ such that
$\liminf_{n \to \infty} X_n =X$ and suppose that $M=(m_{xy})_{x,y \in X}$ is a nonnegative matrix.
Then for all $x_0 \in X$ we have $_nR(x_0,x_0) \rightarrow R(x_0,x_0)$.
Moreover if $M$ is irreducible and $M_n=(m_{xy})_{x,y \in X_n}$ then $_nR \rightarrow R$ as $n \to \infty$ and, in particular,
for all $x_0 \in X$ we have $_nR(x_0,x_0) \rightarrow R$.
\end{teo}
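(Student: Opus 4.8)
The plan is to prove first the single-index statement $_nR(x_0,x_0)\to R(x_0,x_0)$ for a general nonnegative $M$, and then to deduce the irreducible statement almost for free. Throughout, for a subset $Y\subseteq X$ write $M_Y:=(m_{xy})_{x,y\in Y}$ and denote its powers' entries by $(M_Y^k)_{xy}=\sum m_{xz_1}m_{z_1z_2}\cdots m_{z_{k-1}y}$, where the intermediate points $z_1,\dots,z_{k-1}$ range over $Y$; thus $(M_Y^k)_{xy}$ is the weighted count of length-$k$ paths from $x$ to $y$ confined to $Y$. The basic monotonicity is that confining can only delete paths: $(M_Y^k)_{xy}\le (M^k)_{xy}$, and $Y\subseteq Y'$ gives $(M_Y^k)_{xy}\le (M_{Y'}^k)_{xy}$. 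Writing $M_s^Y(x,y):=\limsup_k ((M_Y^k)_{xy})^{1/k}$ and $R^Y(x,y):=M_s^Y(x,y)^{-1}$ (so that $_nR(x,y)=R^{X_n}(x,y)$), this yields $M_s^Y(x,y)\le M_s(x,y)$ and hence $R^Y(x,y)\ge R(x,y)\ge R$. Taking $Y=X_n$ already gives $_nR(x_0,x_0)\ge R(x_0,x_0)$ and, for all $x,y\in X_n$, $_nR(x,y)\ge R$, so in particular $_nR\ge R$.

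For the matching upper bound on $_nM_s(x_0,x_0)$ I would use finite approximation. By a standard supermultiplicativity (Fekete) argument for the diagonal entries, $(M^k)_{x_0x_0}$ is supermultiplicative along multiples of the period $d=d(x_0)$ (as recalled after equation~\eqref{eq:geomparam} for $K_s$), so $M_s(x_0,x_0)=\sup_j ((M^{jd})_{x_0x_0})^{1/jd}$. Hence, given $\varepsilon>0$, I can fix $k_0$ (a multiple of $d$) with $((M^{k_0})_{x_0x_0})^{1/k_0}>M_s(x_0,x_0)-\varepsilon$; when $M_s(x_0,x_0)=+\infty$ (possible for a general nonnegative $M$) the same is run with an arbitrary target bound in place of $M_s(x_0,x_0)-\varepsilon$. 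Since $(M^{k_0})_{x_0x_0}$ is a convergent sum of nonnegative terms over closed length-$k_0$ paths, only finitely many intermediate vertices carry almost all of its mass: there is a finite $F\ni x_0$ with $(M_F^{k_0})_{x_0x_0}$ arbitrarily close to $(M^{k_0})_{x_0x_0}$, so that $((M_F^{k_0})_{x_0x_0})^{1/k_0}>M_s(x_0,x_0)-\varepsilon$ too. Concatenating closed $k_0$-paths inside $F$ gives $(M_F^{jk_0})_{x_0x_0}\ge ((M_F^{k_0})_{x_0x_0})^{j}$, whence $M_s^F(x_0,x_0)\ge ((M_F^{k_0})_{x_0x_0})^{1/k_0}>M_s(x_0,x_0)-\varepsilon$.

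It then remains to transfer this bound from the fixed finite $F$ to the $X_n$. Because $\liminf_n X_n=X$ and $F$ is finite, there is $N$ with $F\subseteq X_n$ for all $n\ge N$, and the monotonicity of the first paragraph gives $_nM_s(x_0,x_0)\ge M_s^F(x_0,x_0)>M_s(x_0,x_0)-\varepsilon$ for $n\ge N$. Combined with $_nM_s(x_0,x_0)\le M_s(x_0,x_0)$ this proves $_nM_s(x_0,x_0)\to M_s(x_0,x_0)$, i.e. $_nR(x_0,x_0)\to R(x_0,x_0)$. For the irreducible case, all $R(x,y)$ coincide with $R$ (as recalled just before the statement), so $R(x_0,x_0)=R$ and the first part gives $_nR(x_0,x_0)\to R$; moreover for $n$ large one has $R\le {_nR}\le {_nR(x_0,x_0)}$, the left inequality from the first paragraph and the right one because $_nR$ is an infimum over $X_n\ni x_0$. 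Since both outer terms tend to $R$, a squeeze yields $_nR\to R$.

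I expect the only genuine obstacle to be the finite-support step: showing that a single high power $(M^{k_0})_{x_0x_0}$ is essentially realized within a finite $F$, and—crucially—that this produces a lower bound on the confined \emph{growth rate} $M_s^F(x_0,x_0)$ rather than on one isolated power. The internal Fekete passage inside $F$ is exactly what converts the single-power estimate into the rate estimate, and it must be carried out after $F$ is fixed. The remaining care is notational bookkeeping of the period $d(x_0)$ and the separate, but parallel, treatment of the case $M_s(x_0,x_0)=+\infty$.
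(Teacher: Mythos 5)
Your argument is correct: the monotonicity under confinement gives $_nR(x_0,x_0)\ge R(x_0,x_0)$, and the Fekete supermultiplicativity along multiples of the period plus truncation of a single high power to a finite set $F$ (then absorbed into $X_n$ for large $n$ via $\liminf_n X_n=X$) gives the matching bound, with the squeeze handling the irreducible case. The paper itself defers to \cite[Theorem 5.1]{cf:Z1} for the proof, and your route is essentially that standard Seneta-type truncation argument, so there is nothing to flag.
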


Note that in the previous theorem the subsets $\{X_n\}_{n \in \N}$ can be chosen
arbitrarily; in particular they may be finite proper subsets. Moreover
the result extends easily to the case of a sequence of nonnegative matrices
$M_n=( {m(n)}_{xy})_{x,y \in X_n}$ where $\liminf_{n \to \infty} X_n =X$,
$0 \le {m(n)}_{xy} \le m_{xy}$ for all $x,y \in X_n$ and
$\lim_{n \to \infty} {m(n)}_{xy} =m_{xy}$ for all $x,y \in X$
(note that $m(n)_{xy}$ is eventually well defined for all $x,y \in X$ as $n \to \infty$).
The idea of the proof is essentially contained in \cite[Theorem 5.2]{cf:Z1}.



Given a sequence of BRWs $\{(X_n,\mu_n)\}_{n \in \N}$ such that $\liminf_{n \to \infty} X_n=X$,
we define $m(n)_{xy}:=\sum_{f \in S_{X_n}} f(y) \mu_{n,x}(f)$ and
the corresponding sequence of matrices $\{M_n\}_{n \in \N}$. 
Note that in the following result we are not assuming that the BRW is irreducible.

\begin{teo}\textbf{\cite[Theorem 5.2]{cf:Z1}}\label{th:spatial}
Let us 
fix a vertex $x_0 \in X$. If $\liminf_{n \to \infty} X_n=X$ and
$m(n)_{xy} \le m_{xy}$ for all $x,y \in X_n$, $n \in \N$ and
 $m(n)_{xy} \to m_{xy}$ as $n \to \infty$ then
\begin{enumerate}[(1)]
\item
$(X,\mu)$ dies out locally (resp.~globally) a.s.~starting from $x_0$ $\Longrightarrow$ $(X_n,\mu_n)$ dies out locally (resp.~globally)
a.s~starting from $x_0$  for all $n \in \N$;
\item
$(X,\mu)$ survives locally starting from $x_0$  $\Longrightarrow$ $(X_n,\mu_n)$ survives locally
starting from $x_0$ eventually as $n \to\infty$.
\end{enumerate}
\end{teo}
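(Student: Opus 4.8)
The plan is to route each assertion through the first-moment quantity $M_s(x_0,x_0)$ where possible, and through a pathwise comparison where first moments are not enough. Recall from Theorem~\ref{th:equiv1local}(1) that a BRW survives locally starting from $x_0$ if and only if $M_s(x_0,x_0)>1$, and that the convergence parameter of Theorem~\ref{th:genseneta} is exactly $R(x_0,x_0)=M_s(x_0,x_0)^{-1}$. I would therefore handle the two local claims of (1) and (2) through this characterization, handle the global claim of (1) by a monotone coupling, and flag from the start why (2) has no global counterpart.

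I would first dispose of the local half of (1). Since every path inside $X_n$ is a path inside $X$ and $m(n)_{xy}\le m_{xy}$, one has $m(n)^{(k)}_{x_0x_0}\le m^{(k)}_{x_0x_0}$ for every $k$, whence ${}_nM_s(x_0,x_0)\le M_s(x_0,x_0)$ (equivalently ${}_nR(x_0,x_0)\ge R(x_0,x_0)$). If $(X,\mu)$ dies out locally a.s.\ from $x_0$, i.e.\ $M_s(x_0,x_0)\le1$, then ${}_nM_s(x_0,x_0)\le1$ for all $n$, and Theorem~\ref{th:equiv1local}(1) applied to $(X_n,\mu_n)$ gives local extinction a.s. For the local half of (2) I would instead invoke the sequence-of-matrices version of Theorem~\ref{th:genseneta}: under $\liminf_n X_n=X$, $0\le m(n)_{xy}\le m_{xy}$ and $m(n)_{xy}\to m_{xy}$, one gets ${}_nR(x_0,x_0)\to R(x_0,x_0)$, that is ${}_nM_s(x_0,x_0)\to M_s(x_0,x_0)$. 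Local survival of $(X,\mu)$ means $M_s(x_0,x_0)>1$, so ${}_nM_s(x_0,x_0)>1$ for all large $n$, and Theorem~\ref{th:equiv1local}(1) yields local survival of $(X_n,\mu_n)$ eventually. This is where the real work sits: because the convergence ${}_nM_s(x_0,x_0)\to M_s(x_0,x_0)$ need not be monotone, one genuinely needs Theorem~\ref{th:genseneta} here rather than the one-line comparison that sufficed for (1).

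The global half of (1) cannot be extracted from $M$ alone, and recognizing this is the conceptual crux. Example~\ref{exm:noext} exhibits two BRWs with identical first-moment matrices, one surviving globally and one not; hence global survival is not a function of $M$, and no first-moment comparison (in particular nothing coming from Theorem~\ref{th:genseneta}) can transfer global extinction from $(X,\mu)$ to $(X_n,\mu_n)$. Instead I would build the two processes on a common probability space, coupling the reproduction variables of \eqref{eq:evolBRW} so that $f^{(n)}_{i,k,x}\le f_{i,k,x}$ pointwise for every $i,k,x$; in the intended application $(X_n,\mu_n)$ is $(X,\mu)$ with all reproductions outside $X_n$ suppressed, so this thinning is automatic. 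Iterating \eqref{eq:evolBRW} then gives $\eta^{(n)}_k\le\eta_k$ coordinatewise a.s., hence $\sum_w\eta^{(n)}_k(w)\le\sum_w\eta_k(w)$, so global extinction of $(X,\mu)$ from $x_0$ forces global extinction of $(X_n,\mu_n)$ from $x_0$ for every $n$ (and the same domination re-proves the local half of (1)). Equivalently, in the confinement setting one may simply quote the killing monotonicity of Section~\ref{subsec:survivalprob}, which with $A\subseteq X_n\subseteq X$ gives $\bar q^{X}(x_0)\le q^{X}(x_0,X_n)\le\bar q^{X_n}(x_0)$, so $\bar q^{X}(x_0)=1$ implies $\bar q^{X_n}(x_0)=1$.

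The main obstacle is thus not computational but structural: one must separate the assertions governed by the single scalar $M_s(x_0,x_0)$ (local survival, in both directions) from the one requiring genuine stochastic domination (global extinction), and one must recognize that the absence of a ``global survival'' clause in (2) is forced by Example~\ref{exm:noext} rather than being an omission. The delicate technical input is the non-monotone convergence ${}_nM_s(x_0,x_0)\to M_s(x_0,x_0)$ supplied by Theorem~\ref{th:genseneta}, and the mild modelling assumption, implicit in the intended construction, that $(X_n,\mu_n)$ arises from $(X,\mu)$ by suppressing reproductions, which is what makes the coupling $\eta^{(n)}_k\le\eta_k$ available.
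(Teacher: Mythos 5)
Your proposal is correct and follows essentially the route the paper indicates: the local assertions in (1) and (2) via the characterization $M_s(x_0,x_0)>1$ of Theorem~\ref{th:equiv1local}(1) together with the (non-monotone) convergence ${}_nR(x_0,x_0)\to R(x_0,x_0)$ from the sequence-of-matrices version of Theorem~\ref{th:genseneta}, and the global half of (1) by stochastic domination/coupling as in \cite[Section 3.3]{cf:Z1}. Your remark that Example~\ref{exm:noext} forces the global claim to rest on the domination structure of the $\mu_n$ rather than on first moments alone is a correct and worthwhile observation, consistent with the intended construction of the confined processes.
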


\noindent Clearly the discrete-time counterpart of a spatially
confined BRW in continuous-time
is obtained by spatially confining the discrete-time counterpart
of the continuous-time BRW.
Hence, Theorem~\ref{th:spatial} yields an analogous result for BRWs in continuous time.

\begin{cor}\textbf{\cite[Theorem 3.1]{cf:BZ2}}\label{cor:sen2}
Let $(X,K)$ be a continuous-time BRW and
let us consider a sequence of continuous-time BRWs $\{(X_n, {K_n})\}_{n\in \N}$
such that $\limsup_{n \to \infty} X_n = X$.
Let us suppose that
$k_{xy}(n)  \le k_{xy}$ for all $n\in\N$, $x,y\in X_n$
and $k_{xy}(n) \to k_{xy}$ as $n \to \infty$ for all $x,y\in X$.
Then $\lambda_s(X_n, {K_n})\ge \lambda_s(X,K)$ and
$\lambda_s(X_n, {K_n})
\to \lambda_s(X,K)$ as $n\to \infty$.
\end{cor}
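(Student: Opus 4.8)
The plan is to derive the statement from the discrete-time spatial approximation result, Theorem~\ref{th:spatial}, applied separately at each fixed value of $\lambda$. Fix a vertex $x_0\in X$ and write $\lambda_s(X,K)$ and $\lambda_s(X_n,K_n)$ for the local critical values starting from $x_0$ (independent of $x_0$ in the irreducible case). For every $\lambda>0$ the continuous-time BRW $(X,K)$ has a discrete-time counterpart with the same local behaviour (Section~\ref{subsec:continuous}) and first-moment matrix $M=\lambda K$, i.e.\ $m_{xy}=\lambda k_{xy}$; likewise $(X_n,K_n)$ has a counterpart with $m(n)_{xy}=\lambda k_{xy}(n)$, and confining a continuous-time BRW commutes with passing to the discrete-time counterpart. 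The hypotheses $k_{xy}(n)\le k_{xy}$ and $k_{xy}(n)\to k_{xy}$ then transfer verbatim to $m(n)_{xy}\le m_{xy}$ and $m(n)_{xy}\to m_{xy}$ for each fixed $\lambda$, so the discrete-time counterparts satisfy the hypotheses of Theorem~\ref{th:spatial}. (The standing requirement $k_{xy}(n)\to k_{xy}$ forces every positive-rate edge, hence every vertex communicating with $x_0$, to lie in $X_n$ for all large $n$, which is all that the local analysis at $x_0$ uses.)

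I would then use the characterisation of the critical value to turn the $\lambda$-wise implications of Theorem~\ref{th:spatial} into statements about $\lambda_s$. By Theorem~\ref{th:equiv1local}(1) the counterpart at parameter $\lambda$ survives locally at $x_0$ if and only if $M_s(x_0,x_0)=\lambda K_s(x_0,x_0)>1$, that is if and only if $\lambda>1/K_s(x_0,x_0)=\lambda_s(X,K)$ by Corollary~\ref{cor:pemantleimproved}; in particular local survival is monotone in $\lambda$ and extinction holds at $\lambda=\lambda_s(X,K)$.

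For the monotonicity claim, fix $\lambda<\lambda_s(X,K)$. Then $(X,K)$ dies out locally almost surely at $x_0$, so by Theorem~\ref{th:spatial}(1) each $(X_n,K_n)$ dies out locally almost surely at $x_0$, whence $\lambda\le\lambda_s(X_n,K_n)$ for all $n$; letting $\lambda\uparrow\lambda_s(X,K)$ gives $\lambda_s(X_n,K_n)\ge\lambda_s(X,K)$ (this also follows at once from the monotonicity of $\lambda_s$ under $X_n\subseteq X$, $k_{xy}(n)\le k_{xy}$ noted in Section~\ref{subsec:continuous}). For convergence, fix any $\lambda>\lambda_s(X,K)$. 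Then $(X,K)$ survives locally at $x_0$, so by Theorem~\ref{th:spatial}(2) the process $(X_n,K_n)$ survives locally at $x_0$ for all large $n$, giving $\lambda_s(X_n,K_n)\le\lambda$ eventually and hence $\limsup_n\lambda_s(X_n,K_n)\le\lambda$. As $\lambda>\lambda_s(X,K)$ is arbitrary and the reverse inequality already holds, $\lambda_s(X_n,K_n)\to\lambda_s(X,K)$.

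The bulk of the work is bookkeeping; the only point that needs care is this last passage from the two one-sided $\lambda$-implications of Theorem~\ref{th:spatial} to convergence of the critical values, which rests on local survival being monotone in $\lambda$ and on extinction precisely at $\lambda=\lambda_s$, both provided by Theorem~\ref{th:equiv1local}(1) and Corollary~\ref{cor:pemantleimproved}. I would finally note that the argument can be shortened: since $\lambda_s(X,K)$ equals the convergence parameter $R(x_0,x_0)=K_s(x_0,x_0)^{-1}$ of $K$ and $\lambda_s(X_n,K_n)$ the corresponding ${}_nR(x_0,x_0)$, both conclusions follow directly from the sequence-of-matrices version of Theorem~\ref{th:genseneta} applied to $K_n\le K$ with $k_{xy}(n)\to k_{xy}$.
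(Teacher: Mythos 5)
Your proposal is correct and follows essentially the same route as the paper, which derives the corollary in the sentence preceding it by observing that spatial confinement commutes with passing to the discrete-time counterpart and then invoking Theorem~\ref{th:spatial}; your write-up merely makes explicit the conversion of the two $\lambda$-wise implications into convergence of $\lambda_s$, using $\lambda_s=1/K_s(x_0,x_0)$. Your closing remark that one could instead apply the sequence-of-matrices version of Theorem~\ref{th:genseneta} directly to $K_n\le K$ is also consistent with the paper, since Theorem~\ref{th:spatial} is itself obtained from that result.
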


Among all possible choices of the sequence
$\{(X_n,\mu_n)\}_{n \in \N}$ there is one which is
\textit{induced} by $(X,\mu)$ on the subsets $\{X_n\}_{n\in \N}$;
more precisely, one can take $\mu_n(g):=\sum_{f \in S_X:f|_{X_n}=g} \mu_x(f)$
for all $x \in X_n$ and $g \in S_{X_n}$.
Roughly speaking, this choice means that all reproductions outside $X_n$ are suppressed.
In this case it is simply
$m(n)_{xy}=m_{xy}$ for all $x,y \in X_n$.

Since
Theorem~\ref{th:spatial} deals with local survival, one can wonder what
can be said about global survival. First of all, if the process $(X,\mu)$ survives 
globally and locally then eventually $(X_n,\mu_n)$ survives locally and thus globally.
The question is nontrivial when $(X,\mu)$ survives globally but not locally, which we assume henceforth in this brief discussion.
In this last case,
if, for instance, $X_n$ is finite for every $n \in \N$ and the graph $(X_n, E_{\mu_n})$ is connected
then, by Theorem~\ref{th:spatial}(1),  $(X_n,\mu_n)$ dies out (locally and globally) a.s.~for all values of $n \in \N$.
On the other hand, the case where $X_n$ is finite for every $n \in \N$ and the graph $(X_n, E_{\mu_n})$ is not connected
is more complicated and can be treated as in \cite[Remark 4.4]{cf:BZ2}.
When $X_n$ is infinite for infinitely many values of $n$,
it is possible that there is no global survival for infinitely many values of $n$.
An example in the discrete-time case can be found in \cite[Remark 5.3]{cf:Z1} while
an example in the continuous-time case can be constructed using \cite[Remark 3.2]{cf:BZ}.
Taking the couple $(X_n,\mu_n)$ random, some results can be achieved as an
application of Theorem~\ref{th:spatial} (see for instance
\cite[Theorem 7.1]{cf:BZ3} or \cite[Theorem 2.4]{cf:GMPV09}).

\begin{exmp}\label{exmp:applicationspatial}
Consider the edge-breeding (continuous-time) BRW on $\Z^d$.
We saw in Example~\ref{exmp:pureglobalsurvival} that if
$\lambda>\lambda_s=1/2d$ then there is local survival.
Suppose that $d>1$ and that $\lambda \le 1/2$ and
consider the infinite cylinder $X_n:=\{x \in \Z^d : |x(i)| \le n, \, \forall i=2, \ldots, d\}$.
It is clear that there is no local survival for the BRW restricted to $X_0=\Z \subseteq \Z^d$,
nevertheless according to Corollary~\ref{cor:sen2} there exists $n_0$ such that
there is local survival on $X_n$ for all $n \ge n_0$.
This shows a difference between random walks and BRWs: the simple random walk on $X_n$ is recurrent for all $n\in \N$
(as the simple random walk on $\Z$); on the other hand, while the BRW restricted to $X_0=\Z$ dies out locally, it
survives when restricted to
$X_n$ if $n$ is sufficiently large (in some sense the BRW on $X_n$ approaches the  BRW on $\Z^d$ as $n$ tends to infinity).

Another consequence of Theorem~\ref{th:spatial} is the following
(see also \cite[Section 5]{cf:Z1}):
consider the edge-breeding continuous-time BRW on $\Z^d$
(but the argument extends easily to any translation invariant BRW in discrete
and continuous time). Let us choose a connected subset
$Y \subset \Z^d$ such that every finite
subset $A \subset \Z^d$ is a subset of a suitable translation of $Y$ in $\Z^d$.
Then the strong critical parameter $\lambda_s^\prime$ of the BRW restricted
to $Y$ is equal to $\lambda_s=1/2d$.
A possible choice is $Y:=\{y \in \Z^d :\langle y, y_0 \rangle \geq \alpha \|y\|\cdot\|y_0\| \}$
for some fixed nontrivial  $y_0 \in \Z^d$
and $\alpha <1$ (where $\langle \cdot, \cdot \rangle$ and $\|\cdot\|$ represent the
usual scalar product and norm of $\Z^d$ respectively).
\end{exmp}

\subsection{Approximation by truncated BRWs}
\label{subsec:truncated}

The family of discrete-time BRWs can be extended to the more general class of \textit{truncated BRWs} where
a maximum of $m \in \N \cup \{\infty\}$ particles per site is allowed. We denote
this process as a BRW$_m$.
The dynamics is described by the following
recursive relation
\[ 
\eta^m_{n+1}(x)=m \wedge \sum_{y \in X} \sum_{i=1}^{\eta^m_n(y)} f_{i,n,y}(x)=
m \wedge \sum_{y \in X} \sum_{j=0}^\infty \ident_{\{\eta^m_n(y)=j\}} \sum_{i=1}^{j} f_{i,n,y}(x).
\] 
Clearly the BRW$_\infty$ is the usual BRW and the BRW$_1$ is the well-known \textit{contact process}.
Note that while for the BRW the reproductions of two particles are independent, this is not true for the BRW$_m$
which is a truly interacting particle system.

There is an analogous class of continuous-time processes that we still call truncated BRWs
which are subject to the same constraint (see \cite{cf:BZ3}).

\begin{rem} \label{rem:restrained}
Even though the BRW$_m$ is the only generalization that we need here,
there is a more general class of continuous-time processes, called
\textsl{restrained} BRWs, which is worth mentioning and
which has been introduced and studied in \cite{cf:BPZ}
(an analogous construction in discrete-time is straightforward and we omit it).
Consider an infinite connected graph $X$ 
and let $\eta(x)$ be the number of individuals living at the site $x\in X$.
The lifespan of each individual is an exponential random variable of mean 1.
During its lifetime each individual tries to reproduce following a Poisson
process of intensity $\lambda$.
Every time the clock associated to the Poisson process rings, the individual tries to send an offspring to a
randomly chosen target neighboring site.
The target neighboring site is chosen using the transition matrix $P=(p(x,y))_{x,y\in X}$
of a nearest neighbor random walk on $X$.
Call the target site $y$.
The reproduction on $y$ is effective only with probability $c(\eta(y))/\lambda$, where $c:\N\to\R^+$ is a
non-increasing and nonnegative function with $c(0)=\lambda$. In this case the population
living at $y$ increases by one individual, otherwise nothing happens.\break \indent
Observe that the restrained BRW is a Markov process and
the continuous-time truncated BRW$_m$ are
special cases ($c\equiv \lambda$ if $m=\infty$ and $c=\lambda \ident_{\{0,1,\ldots, m-1\}})$ otherwise).
Results about survival and stationary measures of this process can be found in \cite{cf:BPZ}; it
is worth noting that this process may have an \textit{ecological equilibrium}, that is,
a phase of local survival where the expected number of individuals per site is bounded from above. This is
not possible for the BRW where local survival implies almost surely an unbounded population
(see the proof of \cite[theorem 4.1(1)]{cf:Z1} and \cite[Theorem 6.2]{cf:Harris63}).
\end{rem}

We observe that the discrete-time counterpart of a
continuous-time truncated BRW is not a discrete-time truncated BRW. Indeed, in order to construct
the discrete-time counterparts we lose the original time scale: on one hand, particles which are in the same generation
in the discrete-time process might have disjoint lifespan intervals in the continuous-time process and,
on the other hand, particles living at the same time in the continuous-time process might belong
to disjoint generations in the discrete-time counterpart.
Hence the results about approximation of continuous-time BRWs by means of continuous-time truncated
BRWs (see \cite{cf:BZ3}) cannot be considered as particular cases of the analogous results
for discrete-time processes (see \cite{cf:Z1}). Nevertheless, the techniques used are very
similar and the results essentially coincides.

The goal of this section is to study the approximation of a BRW $\{\eta_n\}_{n \in \N}$
by means of the sequence of truncated BRWs $\{\{\eta_n^m\}_{n \in \N}\}_{m \in \N}$.
It is clear, by stochastic domination (see \cite[Section 3.3]{cf:Z1}), that
if the BRW dies out locally (resp.~globally)
a.s.~then any truncated BRW dies out locally (resp.~globally).
We are going to prove here a result similar to Theorem~\ref{th:spatial}
as $m$ tends to infinity.
For discrete-time BRWs this has been done in \cite{cf:Z1} while the results
for continuous-time BRWs can be found in
\cite{cf:BZ3}.

From now on we make some assumptions on $(X,\mu)$.
First, we assume that $X$ is countable. Indeed, the finite case is uninteresting
since the the truncated BRW $\{\eta_n^m\}_{n \in \N}$ do not survive
for any $m \in \N$ in discrete and continuous time (by standard Markov chain arguments,
being $\mathbf{0}$ an absorbing state).
Second, we require that the graph $(X,E_{\mu})$ has finite geometry, that is,
$\sup_{x \in X} \mathrm{deg}(x)<+\infty$ and that
the matrix $M$ is irreducible- We denote its
convergence parameter by $R_\mu$.
We observe that, using this notation,
according to Theorem~\ref{th:equiv1local}, local survival
is equivalent to $R_\mu<1$.
For a continuous-time BRW $(X,K)$ we denote the convergence parameter of the matrix
$K$ by $R_K$ and we observe that according to Corollary~\ref{cor:pemantleimproved}, local survival
is equivalent to $1/R_K<\lambda$.

Finally, we suppose that
\begin{equation}\label{eq:suprho}
 \sup_{x \in X} \rho_x([n,+\infty)) \to 0, \qquad \text{as } n \to +\infty.
\end{equation}
This assumption allows to use the measure $\rho$ defined as
\[
\rho(n)=\sup_{x \in X} \rho_x([n,+\infty))-\sup_{x \in X} \rho_x([n+1,+\infty)),
\]
to stochastically dominate all the laws $\{\rho_x\}_{x \in X}$.
Indeed equation~\eqref{eq:suprho} is equivalent to the existence of a measure
which dominates the $\rho_x$s.
The measure $\rho$ has finite second (hence first) moment if and only if
$\int_0^\infty \sup_{x \in X} \rho_x([\sqrt{t},+\infty)) \diff t<+\infty$
(that we assume henceforth).
For a continuous-time BRW $(X,K)$ we simply assume that $\sup_{x \in X} k(x) <+\infty$
and this implies immediately the stochastic domination by means of a continuous-time branching process
with parameter $\sup_{x \in X} k(x)$ (see \cite{cf:BZ3} for details).
\smallskip

We start with the result on the approximation of the local behavior of a BRW by means of
the sequence of truncated BRWs $\{\{\eta_n^m\}_{n \in \N}\}_{m \in \N}$.

\begin{teo}\textbf{\cite[Theorem 6.3]{cf:Z1}} \label{th:mainlocal}$ $\\
Suppose that at least one of the following conditions holds
\begin{enumerate}[(1)]
\item  $(X,\mu)$ is 
 quasi transitive and irreducible;
\item $(X,\mu)$ is irreducible and there exists
$\gamma$
bijection on $X$ such that
\begin{enumerate}[(a)]
\item
$\mu$ is $\gamma$-invariant;
\item for some $x_0\in X$ we have $x_0=\gamma^nx_0$ if and only if $n=0$.
\end{enumerate}
\end{enumerate}
If 
if $\{\eta_n\}_{n \in \N}$ survives locally (starting from $x_0$) then $\{\eta^m_n\}_{n \in \N}$
survives locally (starting from $x_0$) eventually as $m\to +\infty$.
\end{teo}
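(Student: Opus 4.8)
The plan is to run a renormalization (block) argument comparing the truncated process $\{\eta^m_n\}$ with a supercritical oriented percolation, in the spirit of \cite{cf:BZ3, cf:Z1}: local survival of the untruncated BRW makes a single space-time block \emph{productive}, the invariance hypothesis makes the block field translation invariant, and the cut-off level $m$ enters only through the requirement that the per-site cap be inactive inside a single block with high probability. \textbf{Step 1 (a productive finite window).} By Theorem~\ref{th:equiv1local} local survival at $x_0$ is equivalent to $M_s(x_0,x_0)>1$, and in the irreducible case $M_s(x_0,y)=M_s>1$ for every $y\in X$. Hence $\limsup_n\sqrt[n]{m^{(n)}_{x_0y}}>1$ for $y=x_0$ and for the orbit neighbors $y=\gamma^{\pm1}x_0$, so I can fix a single horizon $N$ and a finite set $F\ni x_0$ for which the expected number of particles produced at each of $x_0,\gamma x_0,\gamma^{-1}x_0$ at time $N$, counting only trails that stay inside $F$, exceeds a common $\theta>1$ (a short combination argument aligns the three periods). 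By $\gamma$-invariance (resp.\ quasi transitivity) the same bound holds verbatim around every site $\gamma^k x_0$.

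\textbf{Step 2 (seeding and removing the cut-off).} Starting the untruncated process with $L$ particles at a base point $y$, the counts of particles deposited at $y,\gamma y,\gamma^{-1}y$ inside $F\times\{0,\dots,N\}$ are sums of $L$ independent contributions, each of mean $\ge\theta$ and, by the finite second moment of the dominating law $\rho$, of finite variance; Chebyshev then makes the event ``at least $L$ particles at each of $y,\gamma y,\gamma^{-1}y$ at time $N$'' have probability $>1-\delta$ once $L$ is large. On the other hand the total population in the window is stochastically dominated by $L$ independent Galton--Watson trees of depth $N$ with offspring law $\rho$, hence is $\le C_NL$ with probability $>1-\delta$ for a constant $C_N$ independent of $m$; choosing $m\ge C_NL$ forces the cut-off never to bind on this event, so inside the block $\{\eta^m_n\}$ and $\{\eta_n\}$ coincide. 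Consequently the seeding event, read off the truncated process, still has probability $>1-2\delta$, uniformly in the base point.

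\textbf{Step 3 (percolation comparison and conclusion).} Call a block \emph{good} when, started from $\ge L$ particles at its base point, it deposits $\ge L$ particles at the base points of its space-time neighbors. These indicators are measurable with respect to the driving variables inside disjoint finite windows, hence form a finitely dependent field; for $\delta$ small, the standard domination by oriented percolation (\cite{cf:BZ3, cf:Z1}) makes this field stochastically larger than a supercritical oriented percolation on $\Z\times\N$, whose spatial axis is the bi-infinite sequence of distinct sites $\{\gamma^k x_0\}_{k\in\Z}$, infinite exactly by hypothesis~(2)(b) and provided by the orbit structure under~(1). Finally, from a single particle at $x_0$ the truncated process reaches $\ge L$ particles at $x_0$ with positive probability (on the finite time scale needed it again agrees, for $m$ large, with the untruncated process, whose number of particles at $x_0$ is a.s.\ unbounded on local survival since the first returns to $x_0$ dominate a supercritical Galton--Watson process, cf.\ Remark~\ref{rem:restrained}); conditionally on this seed the dominating percolation survives, and by the symmetry of the construction its central column is wet at infinitely many times, yielding $\limsup_n\eta^m_n(x_0)>0$ with positive probability. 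Hence $\{\eta^m_n\}$ survives locally for all sufficiently large $m$.

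\textbf{Main obstacle.} The delicate part is Step~3: verifying the finite-range dependence of the good-block field and, above all, transferring the local estimate of Steps~1--2 to every base point \emph{uniformly}. This uniformity is precisely what hypotheses~(1) and~(2) deliver, since the $\gamma$-invariance (or the finitely many orbit-types of the automorphism group) forces the good-block probability to be the same at each $\gamma^k x_0$, so that the comparison process is genuinely translation invariant and the oriented-percolation survival theorem applies; converting that survival into recurrence to the single site $x_0$ is the remaining subtlety, handled by the symmetric two-sided seeding.
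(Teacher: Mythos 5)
Your outline follows the same strategy the paper attributes to \cite[Section 6.1]{cf:Z1} and \cite[Section 4]{cf:BZ3}: a Bramson--Durrett block renormalization comparing the truncated process with a supercritical oriented percolation, with the invariance hypotheses supplying uniformity of the good-block probability over the orbit $\{\gamma^k x_0\}$. Steps 1 and the seeding estimate via Chebyshev and the dominating offspring law $\rho$ (this is where the second-moment assumption on $\rho$ enters) are in the right spirit.

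There is, however, a genuine gap exactly where you wave at the ``standard domination'': the claim in Step 3 that the good-block indicators are ``measurable with respect to the driving variables inside disjoint finite windows, hence form a finitely dependent field,'' and the companion claim in Step 2 that choosing $m\ge C_N L$ makes $\{\eta^m_n\}$ and $\{\eta_n\}$ coincide inside the block. Both fail as stated, for the same reason: the cap at $m$ is applied to the \emph{total} occupation number of a site, including particles whose trails originate outside your window. If extra particles enter the block from elsewhere, the cap can be triggered and particles needed for the block to be good can be deleted; so the truncated dynamics inside a window is not a function of the driving variables of that window alone, the coupling with the untruncated process inside the block is not automatic, and --- unlike for the untruncated BRW --- adding particles elsewhere is not monotone in the right direction. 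This is precisely the difficulty the survey flags: ``the percolation is not even one-dependent and this brings some additional difficulties from a technical point of view.'' The actual proof has to control the influx of foreign particles into a block (or redefine the block events so that saturation of the cap is itself absorbed into the estimate) before any domination theorem for dependent fields can be invoked; without that, the reduction to supercritical oriented percolation is not justified. The rest of your argument (two-sided seeding along the orbit, recurrence of the central column) is fine once this is repaired.
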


In the continuous-time case there is an analogous result which
gives an approximation of $\lambda_s$ by $\lambda_s^m$, where $\lambda_s^m$ is
the local critical parameter of the truncated BRW with (at most) $m$ particles per site.

\begin{teo}\textbf{\cite[Theorem 5.1]{cf:BZ3}} \label{th:main}$ $\\
Let $(X,K)$ be a continuous-time BRW and suppose that at least one of the following conditions holds
\begin{enumerate}[(1)]
\item  $(X,K)$ is 
quasi transitive;
\item $(X,K)$ is irreducible and there exists
$\gamma$
bijection on $X$ such that
\begin{enumerate}[(a)]
\item
$\mu$ is $\gamma$-invariant;
\item for some $x_0\in X$ we have $x_0=\gamma^nx_0$ if and only if $n=0$.
\end{enumerate}
\end{enumerate}
Then
\[
\lim_{m\to+\infty}\lambda^m_s=\lambda_s
\ge\lim_{m\to+\infty}\lambda^m_w\ge\lambda_w.
\]
Moreover if $\lambda_s=\lambda_w$ then
$\lambda^m_w\downarrow_{m\to+\infty} \lambda_w$.
\end{teo}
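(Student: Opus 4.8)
The plan is to isolate the only nontrivial statement, namely $\lim_{m\to+\infty}\lambda_s^m=\lambda_s$, and to derive everything else from stochastic domination. Since the truncated process $\{\eta_n^m\}$ is obtained from $\{\eta_n\}$ by suppressing the offspring that would exceed the cap $m$, a standard coupling (see \cite[Section 3.3]{cf:Z1} and \cite{cf:BZ3}) gives $\eta_n^m\le\eta_n^{m+1}\le\eta_n$ pathwise. Hence local (resp.\ global) survival of $\{\eta_n^m\}$ forces local (resp.\ global) survival of $\{\eta_n\}$, so $\lambda_s^m\ge\lambda_s$ and $\lambda_w^m\ge\lambda_w$ for every $m$, and both sequences are nonincreasing in $m$ and bounded below; thus the limits exist. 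Moreover $\lambda_w^m\le\lambda_s^m$ for each $m$ (local survival of the truncated process implies its global survival), so once we know $\lim_m\lambda_s^m=\lambda_s$ we obtain $\lambda_w\le\lim_m\lambda_w^m\le\lim_m\lambda_s^m=\lambda_s$, which is the displayed chain. The final assertion is then an immediate squeeze: if $\lambda_s=\lambda_w$ then $\lambda_w\le\lambda_w^m\le\lambda_s^m$ with $\lambda_s^m\downarrow\lambda_s=\lambda_w$, forcing $\lambda_w^m\downarrow\lambda_w$.

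It therefore remains to prove $\lim_m\lambda_s^m\le\lambda_s$, i.e.\ to fix $\lambda>\lambda_s$ and show that $\{\eta_n^m\}$ survives locally for all sufficiently large $m$; this yields $\lambda_s^m\le\lambda$ eventually, and $\lambda>\lambda_s$ is arbitrary. By Corollary~\ref{cor:pemantleimproved} the choice $\lambda>\lambda_s=1/K_s(x_0,x_0)$ means $\lambda K_s(x_0,x_0)>1$, so by Theorem~\ref{th:equiv1local} the untruncated BRW survives locally at $x_0$; in particular the expected number of particles returning to a fixed finite ball grows exponentially. As stressed before the statement, the discrete-time counterpart of a continuous-time truncated BRW is not itself a discrete-time truncated BRW, so Theorem~\ref{th:mainlocal} cannot be invoked directly and the estimate must be carried out in continuous time. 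The core is a block construction comparing the truncated process with a supercritical oriented percolation, exactly as in \cite{cf:BZ3, cf:Z1}. First I would fix a finite ball $B$ and a time horizon $T$ and define a \emph{good event} $G$ on the space-time box $[0,T]\times B$: starting from a single particle at $x_0$, by time $T$ the process deposits at least $m$ particles in each of a prescribed collection of translated balls. Using the exponential growth above and the comparison with a continuous-time branching process guaranteed by $\sup_x k(x)<\infty$ (Remark~\ref{rem:restrained}), one shows that on $G$ the \emph{truncated} process with cap $m$ still produces, for $m$ large, the number of particles needed to reseed the neighbouring boxes, and that $\Prob(G)$ can be pushed above the oriented-percolation threshold by taking first $T$ (hence the growth factor) and then $m$ large. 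Spatial homogeneity then transports this estimate to every box: under hypothesis (1) the box event has one of finitely many types, each of positive probability, while under hypothesis (2) the map $\gamma$ supplies a bi-infinite $\Z$-orbit of $x_0$ along which $\mu$-invariance makes the box events identically distributed, giving the one-dimensional direction in which to run the percolation.

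The percolation comparison then yields an infinite open cluster with positive probability, and along the base points of its boxes the truncated process keeps at least one particle at a translate of $x_0$ at arbitrarily large times; by irreducibility this is local survival at $x_0$. The hard part will be exactly the truncation bookkeeping inside $G$: because local survival of the untruncated BRW entails an almost surely unbounded local population (Remark~\ref{rem:restrained}), the cap $m$ genuinely removes mass, and one must verify that the number of particles actually \emph{required} for a single percolation step to succeed stays bounded in a way that is eventually dominated by $m$, uniformly over the finitely many box types. Controlling this quantitative step — rather than the soft domination and squeeze arguments — is where the work of \cite{cf:BZ3} is concentrated, and it is the step I would expect to be the most delicate.
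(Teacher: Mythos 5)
Your outline matches the paper's treatment: the paper does not reproduce a proof of this result but defers to \cite[Theorem 5.1]{cf:BZ3}, describing exactly the strategy you sketch --- the soft parts (pathwise domination $\eta^m_n\le\eta^{m+1}_n\le\eta_n$, monotonicity, $\lambda_w^m\le\lambda_s^m$, and the squeeze for the final assertion) plus a block/oriented-percolation comparison in the spirit of \cite{cf:BD88} for $\lim_m\lambda_s^m\le\lambda_s$, with the two hypotheses serving precisely to give either finitely many box types or a $\gamma$-orbit along which to run the percolation. The only technical point you do not flag, which the paper singles out as the extra difficulty in \cite{cf:BZ3}, is that the resulting percolation is not even one-dependent, so the standard domination-by-product-measure step needs additional care beyond the truncation bookkeeping you identify.
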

\medskip

Let us consider now the global behavior; as before, we start
with a discrete-time process.
We take $(X,\mu)$ with $X=\Z \times Y$ (for some set $Y$) and we denote by $g:X \to \Z$ the usual
projection from $X$ onto $\Z$, namely $g(n,y):=n$.
In the following we use the same notation as in Section~\ref{subsec:FBRWs}.
We suppose that $\nu=\mu\circ g^{-1}$ is translation invariant 
(that is, $\gamma$-invariant according to Definition~\ref{def:quasitransitive} for every
translation operator $\gamma$ on $\Z$)
and
we denote the common distribution and the expected number of offsprings
of the BRW
by $\rho$ and $\bar \rho=\sum_{y \in X} m_{xy}$; observe that they.
do not depend on $x \in X$ or $i \in \Z$
since $\nu$ is translation invariant.

\begin{teo}\textbf{\cite[Theorem 6.5]{cf:Z1}} \label{th:zdriftglobal}
Let $X=\Z \times Y$ and suppose that the BRW $(X, \mu)$ is locally isomorphic to $(\Z, \nu)$ where  $\nu$ is
translation invariant.
If $m_{xy}=0$ whenever $|g(x)-g(y)|>1$ then
\begin{enumerate}[(1)]
\item
the BRW survives globally starting from $x$ if and only if $\bar \rho=\sum_{y \in \Z} m_{xy} > 1$;
\item
if the BRW survives globally (starting from $x$) then
then the BRW$_m$ survives globally (starting from $x$) for every sufficiently
large $m$.
\end{enumerate}
\end{teo}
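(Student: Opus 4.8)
The plan is to prove the two parts by quite different means: part (1) reduces to a branching process via local isomorphism, while part (2) requires a renormalization (oriented percolation) comparison.

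For part (1), I would first note that, because $(X,\mu)$ is locally isomorphic to $(\Z,\nu)$ and $\nu$ is translation invariant, the law of the total number of children of a particle at $x$ does not depend on $x$; hence $(X,\mu)$ is locally isomorphic to a branching process (a BRW on a singleton) with offspring law $\rho$, and in particular it is an $\cF$-BRW. Theorem~\ref{th:equiv1global3}(2) then applies, so global survival starting from $x$ is equivalent to $M_w(x)>1$. Since the expected total population after $n$ generations from a single particle equals $\bar\rho^{\,n}$ (the offspring law being $\rho$ at every site), we get $M_w(x)=\liminf_n(\bar\rho^{\,n})^{1/n}=\bar\rho$, so the criterion becomes $\bar\rho>1$. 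The nearest-neighbor hypothesis $m_{xy}=0$ for $|g(x)-g(y)|>1$ plays no role in this first part.

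For part (2) we may assume $\bar\rho>1$, so that the $\Z$-projection of the untruncated BRW is the supercritical, translation-invariant, nearest-neighbor BRW $(\Z,\nu)$. First I would set up a block (renormalization) construction in the $\Z$-direction and in time: fix a spatial scale $L$ and a time horizon $T$, and declare a space-time block ``good'' when, starting from at least $k$ particles projecting into its base level, after $T$ generations there are at least $k$ particles projecting into each of the two neighboring levels. Using supercriticality together with translation invariance and the fact that particles move at most one unit in $\Z$ per generation, one shows that $k,L,T$ can be chosen so that each good-block event has probability exceeding $1-\eps$ (for the untruncated process); because propagation has bounded $\Z$-range, blocks with sufficiently separated $\Z$-coordinates use disjoint families of the independent variables $f_{i,n,x}$, so the events have finite dependency range and the renormalized process dominates a supercritical oriented percolation.

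The passage from the untruncated BRW to the BRW$_m$ is the heart of the matter and the step I expect to be the main obstacle, since BRW$_m$ is a genuinely interacting system in which reproductions are no longer independent. The idea is to intersect each good-block event with the event that \emph{no} $X$-site is ever occupied by more than $m$ particles during the block. Because each block is started from finitely many particles and run for the \emph{finite} time $T$, the total progeny, hence the maximal per-site occupation inside the block, is almost surely finite; therefore, once $k,L,T$ have been fixed, one can choose $m$ so large that this occupation-control event also has probability exceeding $1-\eps$. On the intersection of the two events the truncation never acts, so BRW$_m$ and the BRW coincide throughout the block and the good-block event holds for BRW$_m$ as well; this is precisely where the non-independence of the truncated dynamics is neutralized, as we only ever compare the two processes on an event where they are literally identical. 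The delicate point is the tension between needing \emph{enough} particles ($\ge k$) to seed the neighboring blocks and keeping the per-site occupation \emph{below} $m$; it is resolved by fixing $k,L,T$ first and only then sending $m\to\infty$. Taking $\eps$ small enough that the comparison oriented percolation is supercritical, the $\Z$-projection of BRW$_m$ dominates an infinite open cluster, whence BRW$_m$ survives globally for all sufficiently large $m$, as claimed.
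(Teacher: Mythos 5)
Your part (1) is correct and is essentially the paper's own route: local isomorphism to a translation-invariant $(\Z,\nu)$ makes the offspring law $\rho_x$ independent of $x$, so $(X,\mu)$ is locally isomorphic to a branching process, hence an $\cF$-BRW with $M_w(x)=\bar\rho$, and Theorem~\ref{th:equiv1global3}(2) (equivalently Proposition~\ref{pro:F-BRW1}(1)) gives the equivalence with $\bar\rho>1$; you are also right that the nearest-neighbour hypothesis on $g$ is not used here. For part (2) your architecture --- a space-time block construction in the $g$-direction compared with a supercritical oriented percolation, with the truncation neutralized by working on an event where the BRW and the BRW$_m$ coincide --- is exactly the strategy the paper attributes to \cite{cf:Z1} and \cite{cf:BZ3}.

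The genuine gap is in the two sentences where you claim that the good-block events ``have finite dependency range'' and that the occupation-control event (``no $X$-site is ever occupied by more than $m$ particles during the block'') has probability close to $1$. As stated, that event is \emph{not} measurable with respect to the randomness of the block: in the actual BRW$_m$ the occupation of a site inside a block is inflated by particles descending from seeds of other blocks and, more seriously, from arbitrarily old generations, so whether the truncation bites in a given block depends on the entire past of the process. This is precisely why the paper warns that the comparison percolation ``is not even one-dependent''; your dismissal of the dependence issue is the step that fails. To repair it one must (i) define the block event for the \emph{restricted} untruncated process generated by a chosen set of $k$ seeds only, intersected with occupation control for that restricted process, so that the event is measurable with respect to the $f_{i,n,y}$ in a bounded space-time window; (ii) invoke the monotonicity of $\eta^m$ in the initial configuration (which holds for the coupling $\eta^m_{n+1}(x)=m\wedge\sum_y\sum_{i\le\eta^m_n(y)}f_{i,n,y}(x)$) to transfer the conclusion from the restricted process to the full BRW$_m$; and (iii) still handle the residual dependence between blocks sharing seeds or overlapping windows, e.g.\ by a Liggett--Schonmann--Stacey type domination or by the conditional estimates of \cite[Section 6.1]{cf:Z1}. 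Without these ingredients the deduction ``the renormalized process dominates a supercritical oriented percolation'' is unjustified.
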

Note that the hypotheses that we made in the previous theorem implies that the BRW $(\Z,\nu)$ is
``nearest neighbor'' in the sense that reproductions are possible only in the same site
or towards neighboring sites (in the usual graph $\Z$).
Theorem~\ref{th:zdriftglobal} applies to translation invariant BRWs on two particular graphs:
$\Z^d$ and the homogeneous tree $\mathbb{T}_{r}$ with degree $r$.
In particular the application to $\mathbb{T}_{r}$ is possible since the product
$\Z \times Y$ is meant as a set product and not a graph product; indeed
the set of vertices of $\mathbb{T}_{r}$
can be seen as $\Z^2$ and the projection $g$ as the horocyclic map
(see \cite[Section 12.13]{cf:Woess}).

\begin{cor}\textbf{\cite[Corollary 6.6]{cf:Z1}} \label{cor:zd}
If the BRW $(\Z^d,\mu)$ is translation invariant and there exists a projection $g$ on one of the coordinates
such that $m_{xy}=0$ whenever $|g(x)-g(y)|>1$, then
\begin{enumerate}[(1)]
\item
the BRW survives globally (starting from $x$) if and only if $\bar \rho=\sum_{y \in \Z} m_{xy} > 1$;
\item
if the BRW survives globally (starting from $x$)
then the BRW$_m$ survives globally (starting from $x$) for every sufficiently
large $m$.
\end{enumerate}
\end{cor}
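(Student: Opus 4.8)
The plan is to derive Corollary~\ref{cor:zd} directly from Theorem~\ref{th:zdriftglobal} by realizing $\Z^d$ as a product $\Z\times Y$ to which that theorem applies. Concretely, I would fix the coordinate singled out by the projection $g$ and write $\Z^d=\Z\times Y$ with $Y:=\Z^{d-1}$, so that $g:\Z^d\to\Z$ is the projection onto the first factor, $g(n,y)=n$. The hypothesis $m_{xy}=0$ whenever $|g(x)-g(y)|>1$ is exactly the nearest-neighbor condition required in Theorem~\ref{th:zdriftglobal}. Hence everything reduces to checking that $(\Z^d,\mu)$ is locally isomorphic, via $g$, to a \emph{translation-invariant} BRW $(\Z,\nu)$; once this is established, conclusions (1) and (2) are literally those of Theorem~\ref{th:zdriftglobal}.

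First I would define $\nu$ on $\Z$ by the recipe of Definition~\ref{def:locallyisomorphic}, namely $\nu_{g(x)}(\cdot):=\mu_x(\pi_g^{-1}(\cdot))$ with $\pi_g(f)(n)=\sum_{z\in g^{-1}(n)}f(z)$, and verify that this is well posed, i.e.\ that the right-hand side depends only on $g(x)$. Given $x,x'$ with $g(x)=g(x')$ there is a translation $\gamma$ of $\Z^d$ by a vector in the fibre direction (so that $g\circ\gamma=g$) with $\gamma(x)=x'$. Such a $\gamma$ satisfies $\pi_g(f\circ\gamma^{-1})=\pi_g(f)$, because $\gamma^{-1}(g^{-1}(n))=g^{-1}(n)$; combining this with the $\gamma$-invariance $\mu_x(f)=\mu_{\gamma(x)}(f\circ\gamma^{-1})$ gives $\mu_{x'}(\pi_g^{-1}(h))=\mu_x(\pi_g^{-1}(h))$ for every $h$. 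This is precisely the local-isomorphism identity~\eqref{eq:fgraph}, so $(\Z^d,\mu)$ is locally isomorphic to $(\Z,\nu)$.

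Next I would check that $\nu$ is translation invariant. For a shift $\tau(n)=n+k$ on $\Z$, I would pick the translation $\gamma$ of $\Z^d$ in the $g$-direction with $g\circ\gamma=\tau\circ g$. The same computation as above now yields $\pi_g(f\circ\gamma^{-1})=\pi_g(f)\circ\tau^{-1}$, since $\gamma^{-1}(g^{-1}(n))=g^{-1}(\tau^{-1}(n))$. Feeding this into the $\gamma$-invariance of $\mu$ and unwinding the preimages shows $\nu_{\tau(x)}(h\circ\tau^{-1})=\nu_x(h)$, i.e.\ $\nu$ is $\tau$-invariant for every shift $\tau$, hence translation invariant on $\Z$. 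Local isomorphism moreover gives $\bar\rho_x=\bar\rho_{g(x)}$, which together with translation invariance of $\nu$ makes the quantity $\bar\rho$ in the statement well defined and independent of the starting point.

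With these two facts in hand, Theorem~\ref{th:zdriftglobal} applies verbatim and yields both parts of the corollary. The only genuine point to be careful about --- and the place where the \emph{full} translation invariance of $\mu$ on $\Z^d$, rather than mere invariance along $g$, is used --- is the well-definedness of $\nu$: invariance under the fibre translations is exactly what forces $\mu_x(\pi_g^{-1}(\cdot))$ to collapse to a function of $g(x)$ alone. Everything else is a routine translation of the product structure $\Z^d=\Z\times\Z^{d-1}$ into the hypotheses of Theorem~\ref{th:zdriftglobal}.
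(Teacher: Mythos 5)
Your proposal is correct and follows exactly the route the paper intends: Corollary~\ref{cor:zd} is obtained by writing $\Z^d=\Z\times\Z^{d-1}$, checking via the full translation invariance of $\mu$ that the pushforward $\nu=\mu\circ\pi_g^{-1}$ is a well-defined, translation-invariant law on $\Z$, and then invoking Theorem~\ref{th:zdriftglobal}. The verification of well-definedness and $\tau$-invariance of $\nu$ through fibre translations and shifts in the $g$-direction is sound and is precisely the content the paper leaves implicit when it says Theorem~\ref{th:zdriftglobal} ``applies to translation invariant BRWs on $\Z^d$''.
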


\begin{cor}\textbf{\cite[Corollary 6.7]{cf:Z1}} \label{th:tree}
Let $\mathbb{T}_r$ be a homogeneous tree and suppose that the BRW $(\mathbb{T}_r,\mu)$ is
$\gamma$-invariant for every automorphism $\gamma$ of
 $\mathbb{T}_r$. 
If
$\mu_x(f)\not = 0$ implies $\mathrm{supp}(f) \subseteq B(x,1)$ (where $B(x,1)$
is the usual ball of radius 1 and center $x$ of the graph $\mathbb{T}_r$)
then
\begin{enumerate}[(1)]
\item
the BRW survives globally (starting from $x$) if and only if $\bar \rho=\sum_{y \in \Z} m_{xy} > 1$;
\item
if the BRW survives globally (starting from $x$) then
the BRW$_m$ survives globally (starting from $x$) for every sufficiently
large $m$.
\end{enumerate}
\end{cor}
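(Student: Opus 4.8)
The plan is to deduce the corollary from Theorem~\ref{th:zdriftglobal} by realizing the homogeneous tree as a product $\Z \times Y$ in such a way that the projection onto $\Z$ is the horocyclic map and the pushed-forward reproduction law on $\Z$ is translation invariant. Concretely, I would fix an end $\omega$ of $\mathbb{T}_r$ and let $g:\mathbb{T}_r \to \Z$ be the associated horocyclic index (the Busemann function with respect to $\omega$, normalized so that $g$ is surjective). Since every horocycle $g^{-1}(n)$ is a countably infinite set, all horocycles have the same cardinality, so one may fix a countable set $Y$ and a bijection $\mathbb{T}_r \cong \Z \times Y$ under which $g$ becomes the projection onto the first coordinate. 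This identifies $X:=\mathbb{T}_r$ with $\Z \times Y$ as required by Theorem~\ref{th:zdriftglobal}.

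Next I would verify the two structural hypotheses of that theorem. For the nearest-neighbor condition, note that two distinct vertices $x,y$ of $\mathbb{T}_r$ are adjacent if and only if $|g(x)-g(y)|=1$; hence the support assumption $\mathrm{supp}(f)\subseteq B(x,1)$ forces every $f$ with $\mu_x(f)>0$ to be supported on $\{x\}$ together with its neighbors, all of which lie in $g^{-1}(\{g(x)-1,g(x),g(x)+1\})$. Consequently $m_{xy}=0$ whenever $|g(x)-g(y)|>1$, which is exactly the requirement in Theorem~\ref{th:zdriftglobal}. It remains to produce the BRW $(\Z,\nu)$ with $\nu=\mu\circ\pi_g^{-1}$ and to check that $\nu$ is a well-defined translation-invariant BRW; this is the local-isomorphism statement, and it is where the automorphism invariance of $\mu$ enters.

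The main work is therefore to show that $\nu_n(\cdot):=\mu_x(\pi_g^{-1}(\cdot))$ (for any $x$ with $g(x)=n$) does not depend on the chosen representative $x$ and does not depend on $n$. For well-definedness I would use that the subgroup of $\mathrm{Aut}(\mathbb{T}_r)$ fixing $\omega$ acts transitively on each horocycle: given $x,x'$ with $g(x)=g(x')$, choose an automorphism $\gamma$ fixing $\omega$ with $\gamma(x)=x'$; then $g\circ\gamma=g$, so exactly as in the computation following Definition~\ref{def:quasitransitive} one gets $\pi_g(f\circ\gamma^{-1})=\pi_g(f)$ and hence $\mu_{x'}(\pi_g^{-1}(h))=\mu_x(\pi_g^{-1}(h))$ from the $\gamma$-invariance of $\mu$. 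For translation invariance I would use a hyperbolic automorphism $\tau$ of $\mathbb{T}_r$ having $\omega$ as one of its fixed ends and translation length $1$, so that $g\circ\tau=g+1$; feeding $\gamma=\tau$ into the $\gamma$-invariance of $\mu$ and tracking the effect of $\tau$ through $\pi_g$ yields $\nu_{n+1}=\nu_n$ up to the unit shift on $\Z$, i.e.\ $\nu$ is translation invariant. The existence of such automorphisms (one transitive on each horocycle, and one realizing a horocyclic shift of length one) is the structural fact about $\mathrm{Aut}(\mathbb{T}_r)$ that carries the proof; I expect the bookkeeping in transporting $\pi_g$ across $\tau$ to be the only delicate point, and it is purely formal once the relation $g\circ\tau=g+1$ is available.

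With these verifications in hand, $(\mathbb{T}_r,\mu)$ is locally isomorphic to the translation-invariant, nearest-neighbor BRW $(\Z,\nu)$, and Theorem~\ref{th:zdriftglobal} applies verbatim. Since the full automorphism group acts transitively on vertices, the total expected offspring $\bar\rho=\sum_y m_{xy}$ is independent of $x$ and coincides with $\sum_n m^{\Z}_{g(x),n}$, the parameter $\bar\rho$ of the projected chain; thus part~(1) (global survival if and only if $\bar\rho>1$) and part~(2) (global survival of $(\mathbb{T}_r,\mu)$ implies global survival of the truncated BRW$_m$ for all large $m$) follow immediately from the corresponding statements of Theorem~\ref{th:zdriftglobal}.
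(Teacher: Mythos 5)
Your proposal is correct and takes essentially the same route the paper indicates for this corollary: identify $\mathbb{T}_r$ with $\Z\times Y$ via the horocyclic map $g$ associated to a fixed end, use automorphism invariance (through the stabilizer of the end, acting transitively on horocycles, and a length-one translation toward the end) to show the pushed-forward law $\nu=\mu\circ\pi_g^{-1}$ is a well-defined translation-invariant nearest-neighbour BRW on $\Z$, and then apply Theorem~\ref{th:zdriftglobal}. The only slip is your claim that two vertices are adjacent \emph{if and only if} $|g(x)-g(y)|=1$ (the converse implication is false), but since you only use the true direction, the argument is unaffected.
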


Let us consider now the continuous-time case; there are analogous results  which
give an approximation of $\lambda_w$ by $\lambda_w^m$, where $\lambda_w^m$ is
the global critical parameter of the truncated BRW with (at most) $m$ particles per site.
From now on we deal with a site-breeding BRW; thus, $k(x)=k$ for all $x \in X$, that is,
we set $k_{xy}=k p(x,y)$ where $P$ is a transition matrix of a random walk.
We
stress that in this case $\lambda_w=1/k$.
We are concerned with the question whether
$\lambda_w^m\downarrow\lambda_w=1/k$ or not. Under the hypotheses of
Theorem~\ref{th:main}, this is the case when the BRW has no pure
global survival phase (i.e.~$r=1$ where $r$ is the spectral radius of the random walk $P$).
The interesting case is $r>1$. Most natural examples are drifting random walks on $\Z^d$
and the simple random walk on homogeneous trees: as for discrete-time processes,
in both cases we show that $\lambda^m_w\stackrel{m\to\infty}{\lra}\lambda_w$.

\begin{teo}\textbf{\cite[Theorem 6.1]{cf:BZ3}}\label{th:zdrift}
Let $P$ be a transition matrix of a nearest-neighbor, translation invariant random walk on $\Z$.
Then $\lim_{m\to+\infty}\lambda^m_w=1/k=\lambda_w$.
\end{teo}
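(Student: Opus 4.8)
The plan is to pair two soft monotonicity facts with a renormalization argument comparing BRW$_m$ to a supercritical oriented percolation. First I would dispose of the easy inequality. Truncation only deletes particles, so BRW$_m$ is stochastically dominated by BRW$_{m+1}$ and by the untruncated BRW; consequently global survival of BRW$_m$ forces global survival of BRW$_{m+1}$ and of the free BRW, whence $\lambda^m_w$ is non-increasing in $m$ and $\lambda^m_w\ge\lambda_w$. Since the process is site-breeding we have $\lambda_w=1/k$ (Corollary~\ref{cor:globalsurvivalcontinuous} and Proposition~\ref{pro:F-BRW1}), so $L:=\lim_{m\to\infty}\lambda^m_w$ exists and $L\ge 1/k$. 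It remains to prove $L\le 1/k$, and for this it is enough to show that for every fixed $\lambda>1/k$ the process BRW$_m$ with reproduction parameter $\lambda$ survives globally once $m$ is large; letting $\lambda\downarrow 1/k$ then gives $L\le 1/k$.

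Fix such a $\lambda$, so that supercriticality reads $\bar\rho=\lambda k>1$. The strategy is to propagate \emph{fullness} of sites rather than to follow individual particles. I would discretize time into steps of a fixed length $T$ and partition $\Z$ into consecutive intervals, obtaining space-time blocks; by translation invariance the event attached to a block has the same law under spatial shifts, and because $P$ is nearest-neighbour each such event depends only on the evolution inside a bounded space-time window, so the block field is finitely dependent. A block is declared good if, started from a configuration carrying the maximal number $m$ of particles at each site of its spatial interval, the truncated dynamics produces at least $m$ particles at every site of the spatially adjacent interval at the end of the time step. The engine is a law-of-large-numbers estimate made possible by $\lambda k>1$: because translation invariance on $\Z$ makes $P$ doubly stochastic, starting from $m$ particles at every site keeps the free-process mean occupancy spatially uniform and equal to $m\,e^{(\lambda k-1)T}$, which strictly exceeds $m$ once $T$ is fixed large; since the founding families evolve independently, concentration forces the occupancy at a given site to be $\ge m$ with probability tending to $1$ as $m\to\infty$, so the site refills to its cap. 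Running this over the bounded block (edge effects absorbed by enlarging the intervals) shows that the probability of a good block tends to $1$ as $m\to\infty$.

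The good-block field therefore stochastically dominates a finite-range oriented percolation on $\Z\times\N$ whose density of open blocks can be pushed arbitrarily close to $1$ by taking $m$ large. By the standard domination of finitely dependent fields by Bernoulli percolation, together with the survival of supercritical oriented percolation, for $m$ large this block percolation percolates; an infinite oriented open path of good blocks keeps at least one particle alive at all times, that is, BRW$_m$ survives globally. This yields $\lambda^m_w\le\lambda$ for all large $m$, hence $L\le\lambda$, and finally $L=1/k=\lambda_w$.

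The hard part will be the good-block estimate under truncation. One must verify that capping each site at $m$ does not choke the refilling mechanism: the arrivals at a site are sums of contributions from neighbours that are themselves capped, so the concentration bound has to be applied to the truncated evolution directly rather than to the free (unbounded) BRW, exploiting that the expected supply beats the cap $m$ by the fixed multiplicative margin coming from $\lambda k>1$ (here the standing moment hypothesis on $\rho$ and $\sup_x k(x)<\infty$ enter). Simultaneously the good-block event must be kept measurable with respect to a bounded space-time window, since the finite-range dependence of the block field — and hence the comparison with Bernoulli percolation — relies on it; this is exactly where the nearest-neighbour hypothesis on $P$ is used.
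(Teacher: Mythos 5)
Your overall architecture (monotonicity in $m$ giving $\lambda^m_w\downarrow L\ge\lambda_w=1/k$, then a block renormalization and comparison with oriented percolation for each fixed $\lambda>1/k$) is the same strategy the paper points to: it cites the Bramson--Durrett scheme and says the proof of Theorem~\ref{th:zdrift} "relies on a comparison between the processes and a suitable oriented percolation." But there are two places where your proposal has a genuine gap rather than a deferred computation. First, the central estimate --- that a "good block" occurs with probability tending to $1$ as $m\to\infty$ \emph{for the truncated dynamics} --- is exactly the hard part, and your plan for it does not work as stated: the contributions arriving at a site in BRW$_m$ are not independent and do not have the free-process mean $m\,e^{(\lambda k-1)T}$, because truncation removes particles throughout the block, so the law of large numbers you invoke has no independent summands and no guaranteed supercritical mean to concentrate around. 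The device that makes this tractable (and the one used in \cite{cf:BZ3}) is to define the block event in terms of the \emph{free} BRW intersected with the event that the occupation number never reaches the cap $m$ inside the relevant space--time window; on that event the free and truncated processes coincide, and the probability of ever hitting the cap in a bounded window, started from a bounded number of particles, tends to $0$ as $m\to\infty$. You acknowledge the difficulty but propose to "apply the concentration bound to the truncated evolution directly," which is precisely what is not available.

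Second, your appeal to finite-range dependence plus Liggett--Schonmann--Stacey domination is not justified here. In continuous time a nearest-neighbour BRW propagates influence arbitrarily far in any positive time interval (a chain of birth events), so a block event is not exactly measurable with respect to a bounded space--time window; more importantly, whether a site is occupied in the \emph{truncated} process depends on whether its ancestors were removed by truncation at earlier times, so the natural good-block field inherits dependence on the whole past. The paper flags this explicitly: "the percolation is not even one-dependent and this brings some additional difficulties from a technical point of view" --- overcoming this is the main technical content of \cite[Section 4]{cf:BZ3} and cannot be replaced by the standard finitely-dependent-to-Bernoulli comparison you cite. So while your plan is pointed in the right direction, both the good-block estimate and the domination step need to be rebuilt along the lines above before the argument closes.
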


This result immediately extends to the case of a class of
more general spaces (including multidimensional lattices $\Z^d$) in
the following way.

\begin{cor}\textbf{\cite[Corollary 6.1]{cf:BZ3}}\label{cor:zdcontinuous}
Let us consider the BRW $(Y\times\Z, K_\alpha)$
where $K_\alpha=\alpha ({\mathbb I}^Y\times P) +(1-\alpha) (Q\times {\mathbb
I}^\Z)$ and $Q$ and $P$ are transition matrices of a random walk on $Y$ and of a translation invariant random walk on
$\Z^d$ (as in
Theorem~\ref{th:zdrift}) respectively.
Then $\lim_{k\to+\infty}\lambda^k_w=1/k=\lambda_w$.
\end{cor}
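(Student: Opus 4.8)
The plan is to reduce the product case to the one-dimensional Theorem~\ref{th:zdrift} by projecting onto the $\Z$-factor. Throughout I write $m$ for the truncation level (the corollary's displayed formula uses the letter $k$ both for the truncation index and for the site rate; these are distinct and I keep them separate). Since $P$ and $Q$ are stochastic, the total rate out of any $(y,i)$ under $K_\alpha$ is $\sum_{y',i'}(K_\alpha)_{(y,i),(y',i')}=\alpha+(1-\alpha)=1$, so the BRW is site-breeding with $k(x)\equiv 1$ — equivalently, locally isomorphic to a branching process — and hence $\lambda_w=1/k=1$ by Proposition~\ref{pro:F-BRW1}(1). Because BRW$_{m+1}$ stochastically dominates BRW$_m$ and the untruncated BRW dominates every BRW$_m$, the sequence $\{\lambda^m_w\}_m$ is nonincreasing and bounded below by $\lambda_w$; thus $L:=\lim_m\lambda^m_w$ exists and $L\ge 1/k$. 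It remains to prove $L\le 1/k$, that is, that for every $\lambda>1/k=\lambda_w$ the truncated BRW$_m$ on $Y\times\Z$ survives globally for all sufficiently large $m$.

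First I would record the crucial algebraic fact. With $g:Y\times\Z\to\Z$, $g(y,i)=i$, one computes $\sum_{z\in g^{-1}(i')}(K_\alpha)_{(y,i),z}=\alpha P(i,i')+(1-\alpha)\delta_{i,i'}=:\widetilde k_{i i'}$, which is independent of $y$. Hence $(Y\times\Z,K_\alpha)$ is locally isomorphic (Definition~\ref{def:locallyisomorphic}) to the one-dimensional BRW $(\Z,\widetilde K)$ with $\widetilde K=\alpha P+(1-\alpha)I$. The matrix $\widetilde K$ is again nearest-neighbor and translation invariant on $\Z$, and site-breeding with $\widetilde k\equiv 1$, so Theorem~\ref{th:zdrift} applies to it and gives $\lim_m\lambda^m_w(\Z,\widetilde K)=1/\widetilde k=1=\lambda_w(\Z,\widetilde K)$. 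By the invariance of global survival under local isomorphism (see the discussion around equation~\eqref{eq:G-FBRWs}) this identifies $\lambda_w(Y\times\Z,K_\alpha)=\lambda_w(\Z,\widetilde K)=1$, consistent with the first paragraph, and, more importantly, it supplies the one-dimensional input: for $\lambda>1$ the truncated BRW$_m$ on $(\Z,\widetilde K)$ survives globally once $m$ is large.

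The transfer of this survival to the product process is the heart of the matter. The idea is to track the column totals $\bar\eta_t(i):=\sum_{y\in Y}\eta^m_t(y,i)$. In the untruncated process these totals evolve exactly as the BRW $(\Z,\widetilde K)$: a particle in column $i$ produces a child in column $i'$ at rate $\lambda\alpha P(i,i')$ (a $P$-move, same $y$) and a child back in column $i$ at rate $\lambda(1-\alpha)$ (a $Q$-move, which changes $y$ but not $i$). Thus, stripped of its $Y$-label, the reproduction seen by the $\Z$-coordinate is precisely that of $\widetilde K$. I would then compare the product BRW$_m$ with the one-dimensional BRW$_m$ on $(\Z,\widetilde K)$, the point being that the per-site cap $m$ on each cell $(y,i)$ is a \emph{weaker} constraint than a per-column cap $m$, and that the $Q$-moves actively disperse particles across the fiber $Y$, so the cap is rarely binding.

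The main obstacle is exactly this last comparison: truncation in $Y\times\Z$ acts cell-by-cell, not column-by-column, so a naive particle-matching coupling can break when a birth in the product process targets an already-full cell $(y',i)$ while the corresponding column still has room in the one-dimensional process. Consequently one cannot dominate $\bar\eta^m$ from below by a one-dimensional truncated BRW through an elementary coupling. Instead I would re-run, in the product setting, the space-time block (oriented percolation) comparison underlying Theorem~\ref{th:zdrift}: over a block of the form $[0,T]\times(\text{spatial window})$, a seed of order $m$ particles produces, with probability tending to $1$ as $m\to\infty$, seeds of order $m$ in the neighboring blocks shifted along $\Z$. Supercriticality ($\lambda>1$) guarantees multiplication of the column totals, while the dispersion provided by $P$ in the $\Z$-direction and by $Q$ in the $Y$-direction keeps the occupation of every individual cell of order $O(1)$ relative to $m$ on the block scale, so the per-cell truncation does not interfere with the block event — if anything, the extra spreading in $Y$ makes the estimates of Theorem~\ref{th:zdrift} only more favorable. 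Comparison with supercritical oriented percolation then yields global survival of BRW$_m$ for all large $m$, giving $L\le 1/k$ and hence $\lim_m\lambda^m_w=1/k=\lambda_w$; the extension to a distinguished $\Z^d$ factor, as in Corollary~\ref{cor:zd}, is identical after projecting onto one coordinate.
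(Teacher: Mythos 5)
Your reduction to Theorem~\ref{th:zdrift} via the projection $g(y,i)=i$ and the kernel $\widetilde K=\alpha P+(1-\alpha)I$ is the right move, and it is how the paper treats this statement (the corollary is presented as an \emph{immediate} extension of Theorem~\ref{th:zdrift}). The gap is in your last step: you assert that ``one cannot dominate $\bar\eta^m$ from below by a one-dimensional truncated BRW through an elementary coupling'' and substitute an uncarried-out re-run of the block construction. That substitute is not a proof. Its key supporting claim --- that the dispersion in $Y$ keeps every individual cell's occupation $o(m)$ on the block scale, so the per-cell cap ``does not interfere with the block event'' --- is unjustified and false in general: take $Y=\{0,1\}$ with $Q$ the flip; a column carrying order $m$ particles then forces some cell to carry order $m$ particles, so the per-cell truncation is genuinely active at the block scale.

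Moreover, the elementary coupling you dismiss actually works, which is precisely why the corollary is immediate. Maintain a column-preserving injection from the particles of the one-dimensional BRW$_m$ $\zeta^m$ (kernel $\widetilde K$) into the particles of the product BRW$_m$ $\eta^m$; such an injection exists if and only if $\sum_{y}\eta^m(y,i)\ge\zeta^m(i)$ for every $i$. Run deaths and birth clocks of matched pairs together: every particle has an $Exp(1)$ lifetime, and the column-marginal birth rate of a product particle at $(y,i)$ toward column $i'$ is $\lambda\bigl(\alpha P(i,i')+(1-\alpha)\delta_{ii'}\bigr)=\lambda\widetilde k_{ii'}$, independent of $y$, so \emph{any} product particle in column $i'$ can serve as a match. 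The only delicate transition is exactly the one you flag: a birth targets a full cell $(y',i')$ while $\zeta^m(i')<m$. But $\eta^m(y',i')=m$ forces $\sum_y\eta^m(y,i')\ge m>\zeta^m(i')$, so column $i'$ already holds a strict surplus of unmatched product particles, and the newborn $\zeta$-particle can be matched to one of them (legitimately, by memorylessness). Hence the column totals of the product BRW$_m$ stochastically dominate the one-dimensional BRW$_m$, survival transfers for every $\lambda>1/k$ once $m$ is large, and $\lambda_w^m(Y\times\Z,K_\alpha)\le\lambda_w^m(\Z,\widetilde K)\to 1/k$; combined with your (correct) lower bound $\lambda_w^m\ge\lambda_w=1/k$, this finishes the proof without any new percolation argument.
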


For a homogeneous tree the following result holds.

\begin{teo}\textbf{\cite[Theorem 6.2]{cf:BZ3}}\label{th:treecontinuous}
If $X=\mathbb{T}_d$ is the homogeneous tree of
degree $d$ and $P$ is the simple random walk on $X$
then $\lim_{m\to+\infty}\lambda^m_w=1/k=\lambda_w$.
\end{teo}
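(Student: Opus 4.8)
The plan is to establish the two bounds $\lambda_w \le \lim_m \lambda^m_w$ and $\lim_m \lambda^m_w \le \lambda_w$ separately, the first being essentially automatic. Since $\{\eta^m_n\}_{n\in\N}$ is stochastically dominated by $\{\eta^{m'}_n\}_{n\in\N}$ for $m\le m'$ and by the full BRW, global survival of $\mathrm{BRW}_m$ implies global survival of $\mathrm{BRW}_{m'}$ and of the full BRW; hence $\lambda^m_w$ is nonincreasing in $m$ and $\lambda^m_w\ge\lambda_w=1/k$ for every $m$ (recall that $(\T_d,K)$ is locally isomorphic to a branching process, so by Proposition~\ref{pro:F-BRW1}(1) there is global survival if and only if $\bar\rho=\lambda k>1$). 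Thus $\lim_m\lambda^m_w$ exists and is $\ge 1/k$, and it suffices to show that for every fixed $\lambda>1/k$ the process $\mathrm{BRW}_m$ survives globally once $m$ is large enough.

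Fix such a $\lambda$, so the non-truncated BRW is globally supercritical with $\bar\rho=\lambda k>1$. First I would set up a renormalization scheme adapted to the outward drift of the simple random walk on $\T_d$: choose a root $o$, a large radius $L$ and a large time horizon $T$, and declare a block (a ball of radius $L$ around a vertex, together with a designated core and several ``exit'' vertices lying one block further from $o$ in distinct branches) to be \emph{open} if, starting from a single particle in its core, the full BRW deposits at least $N$ particles in the core of each of the neighbouring outer blocks by time $T$ while never exceeding $N$ particles at any single site. Because $\bar\rho>1$ the total population grows exponentially, and because the simple random walk on the tree has strictly positive outward drift (reaching the neighbour toward $o$ with probability $1/d$ and a neighbour away from $o$ with probability $(d-1)/d$), the descendants disperse into the many branches; standard many-to-one and large-deviation estimates should give that, with $L$ and $T$ large, a block is open with probability as close to $1$ as desired.

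The crucial transfer step is to pass from the full BRW to $\mathrm{BRW}_m$. On a fixed block over the finite window $[0,T]$ the non-truncated process has an almost surely finite total population, hence a finite maximal per-site occupation; therefore the event that this occupation stays below $m$ throughout has, for $m$ large, probability arbitrarily close to $1$, and on that event $\mathrm{BRW}_m$ and the full BRW coincide inside the block. Consequently the open-block property continues to hold for $\mathrm{BRW}_m$ with probability close to $1$ once $m$ is large. The collection of open blocks then stochastically dominates a finite-range dependent oriented site percolation whose geometry is inherited from $\T_d$ (each open block opens blocks further out in several distinct branches); by the Liggett--Schonmann--Stacey comparison a finite-range dependent percolation of density close to $1$ dominates a supercritical Bernoulli percolation, which on this branching geometry contains an infinite open cluster with positive probability. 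An infinite open cluster forces an infinite chain of occupied cores, that is, global survival of $\mathrm{BRW}_m$. This yields $\lim_m\lambda^m_w\le\lambda$ for every $\lambda>1/k$, and together with the lower bound gives $\lim_m\lambda^m_w=1/k=\lambda_w$.

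The main obstacle is the interplay between truncation and the seeding event: one must design the good event so that the particles it relies on never pile up beyond the allowed height $m$, which is precisely why the definition of an open block caps the per-site occupation at a constant $N$ independent of $m$, and why the spreading of the population across distinct branches of $\T_d$ is essential. The secondary difficulty is verifying that the resulting comparison percolation genuinely percolates; here the nonamenability of the tree (equivalently the transience and positive drift of the simple random walk, which is also what underlies the one-dimensional result Theorem~\ref{th:zdrift}) is exactly what guarantees survival of the oriented percolation despite the loss incurred by the truncation.
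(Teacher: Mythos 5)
Your overall strategy is the one actually used for this theorem: the paper does not reprove it but points to \cite[Theorem 6.2]{cf:BZ3} and states explicitly that the proof is a Bramson--Durrett style comparison with a suitable oriented percolation, which is exactly your block renormalization. Your first paragraph (monotonicity of $m\mapsto\lambda_w^m$, the bound $\lambda_w^m\ge\lambda_w=1/k$ via stochastic domination by the full BRW and Proposition~\ref{pro:F-BRW1}, reduction to showing survival of BRW$_m$ for each fixed $\lambda>1/k$ and large $m$) is correct and standard.

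There is, however, a genuine gap at the step where you declare that the open blocks ``stochastically dominate a finite-range dependent oriented site percolation'' and invoke the Liggett--Schonmann--Stacey comparison. The paper warns precisely about this point: the induced percolation ``is not even one-dependent,'' and this is where the real technical work of \cite{cf:BZ3} lies. The difficulty is twofold. First, your open-block event is defined for the full BRW started from a single fresh particle in the core, but in the actual truncated dynamics a block is entered by the (up to $N$) seeding particles produced by its parent block \emph{together with} stray particles arriving from elsewhere; since BRW$_m$ is a genuinely interacting system, those extra particles can saturate sites and \emph{suppress} the births your good event relies on, so the event ``the block functions'' is not measurable with respect to randomness attached to the block and a bounded neighbourhood, and extra particles are not harmless in the way they would be for the non-truncated process. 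Second, the genealogies (reproduction trails) realizing the good events of different blocks overlap, since the seeding particles of a child block are descendants of particles counted in the parent block's event. Consequently the dependence of the block field has unbounded range and LSS cannot be applied off the shelf; the cited proof replaces it with a bespoke percolation lemma (see \cite[Section 4]{cf:BZ3}) for fields in which the conditional probability of a site being open, given an arbitrary configuration on a suitable set of other sites, is bounded below by a density close to $1$. Your construction can likely be repaired along those lines (the cap $N$ on per-site occupation and the disjointness of the outward branches of $\mathbb{T}_d$ are the right ingredients), but as written the comparison step would fail.
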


Observe that Theorem~\ref{th:treecontinuous} can be immediately extended to the edge-breeding
BRW on $\mathbb{T}_d$ (on regular graphs, the edge-breeding BRW is a particular site-breeding BRW).
In the edge-breeding case we have that
$\lim_{m\to+\infty}\lambda^m_w=1/d=\lambda_w$; on the other hand, according to Theorem~\ref{th:main},
$\lim_{m\to+\infty}\lambda^m_s=1/2\sqrt{d-1}=\lambda_s$. Thus $\lambda_w^m<\lambda_s^m$ eventually as
$m \to \infty$ (pure global survival phase for truncated BRWs).
In particular in \cite{cf:Pem} it was shown that $\lambda_w^1< \lambda_s^1$, hence we conjecture that
$\lambda_w^m<\lambda_s^m$ for all $m \in \N$.

Discussing the details of the proofs goes beyond the purpose of this chapter. We just observe
that they rely on a comparison between the processes and a suitable oriented percolation.
Such a strategy has been introduced in \cite{cf:BD88} and widely used since then. The difference
in our case is that the percolation is not even one-dependent and this brings some additional
difficulties from a technical point of view (see \cite[Section 4]{cf:BZ3} and \cite[Section 6.1]{cf:Z1}).
Some applications and a slight generalization of these results of approximation can be found in
\cite[Theorem 3.4]{cf:BBZ} and \cite[Theorem 1]{cf:BLZ}.

\section{Proofs}
\label{sec:proofs}

Here we sketch the proofs of the new results.

\begin{proof}[Proof of Proposition~\ref{pro:maximumprinciple}]
Without loss of generality we can suppose that $\bar q(x)<1$ for all $x \in X$. Indeed,
given $x_0$ such that $\bar q(x_0)=1$ then for all $x \in \mathcal{N}_{x_0}$ we have
$\bar q(x)=1$. Since we defined $\widehat z(x):=1$ whenever $\bar q(x)=1$ we can remove
these vertices obtaining a new set $X^\prime \subseteq X$.
Consider the restricted BRW on $X^\prime$ (obtained by killing all the particle going outside $X^\prime$).
The generating function $G^\prime$ of the new BRW satisfies $G^\prime((z|_{X^\prime})|x) \ge G(z|x)$ for all
$x \in X^\prime$, hence $G(z) \ge z$ implies $G^\prime(z|_{X^\prime}) \ge z|_{X^\prime}$.
Moreover $\widehat z$ satisfies the conclusions of the proposition if and only if $\widehat{z|_{X^\prime}}\equiv
 \widehat z|_{X^\prime}$
does. Thus, it is enough to prove the result for the BRW restricted to $X^\prime$.

We use the notation of Section~\ref{subsec:nodeath}.
Note that $\widehat z:=T_{\bar q}^{-1}(z)$, 
thus $G(z) \ge z$  is equivalent to $\widehat G(\widehat z) \ge \widehat z$.
Hence it is enough to prove the proposition when $\mu_x(\mathbf{0})=0$ for all $x \in X$
which implies $\bar q=\mathbf{0}$ and $\widehat z= z$.
Suppose that $\mathcal{N}_x$ is nonempty, $z(y) \le z(x)$ for all $y \in  \mathcal{N}_x$
and $z(y_0)< z(x)$ for some $y_0 \in  \mathcal{N}_x$.
Then, using the fact that $z \le \mathbf{1}$ and that $\prod_{y \in X} z(y)^{f(y)} \le z(x)$ if $\mathcal{H}(f) \ge 1$,
we have that
$z(x) \le G(z|x) \le \sum_{f \in S_X:f(y_0)=0} \mu_x(f) z(x) +\sum_{f \in S_X:f(y_0)>0} \mu_x(f) z(y_0)<  z(x)$
which is a contradiction.
As for the second part, since $z(y) \le 1= z(x)$ for all $y \in X$ then we have $z(y)=1$ for all $y \in X$.
Finally, by induction we obtain the result for the set $\{y \in X:x \to y\}$.
\end{proof}

\begin{proof}[Proof of Theorem~\ref{th:equiv1local}]
 The first part of the theorem is \cite[Theorem 4.1]{cf:Z1}. The sufficient condition
in the second part follows easily from the first part. Clearly, it is equivalent to study the BRW restricted to
$Y:=\{w:x \to w \to y\}$ which is finite. In this case
$q(w,w)=1$ for all $w$ in a finite irreducible class implies a.s.~extinction in the class; if the number
of classes is finite then $q(x,y)=1$.
\end{proof}

Before proving
Proposition~\ref{pro:qtransitive} and Theorem~\ref{th:Fgraph}
we need two technical lemmas.

\begin{lem}\label{lem:convexity}
 Let $(X,\mu)$ be a BRW and fix $z,v \in [0,1]^X$ such that $z+\varepsilon v \in [0,1]^X$ for some
$\varepsilon>0$.
Then the function $t \mapsto G(z+vt|x)$ is strictly convex if and only if
\begin{equation}\label{eq:supp}
 \exists f : \mu_x(f)>0 ,\, \sum_{y \in \mathrm{supp}(v)} f(y) \ge 2, \,
\mathrm{supp}(z) \cup \mathrm{supp}(v)\supseteq \mathrm{supp}(f).
\end{equation}
\end{lem}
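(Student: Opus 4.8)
The plan is to expand the map $t \mapsto G(z+vt\,|\,x)$ termwise and reduce strict convexity to the positivity of a single power-series coefficient. First I would use the definition \eqref{eq:genfun} to write
\[
\psi(t) := G(z+vt\,|\,x) = \sum_{f \in S_X} \mu_x(f)\, g_f(t), \qquad g_f(t) := \prod_{y \in X} \big(z(y)+t\,v(y)\big)^{f(y)}.
\]
Since $z \ge 0$, $v \ge 0$ and $z+\varepsilon v \in [0,1]^X$, for every $t \in [0,\varepsilon]$ the point $z+tv$ lies in $[0,1]^X$, so each factor $z(y)+t\,v(y)$ is nonnegative; because $f$ has finite total mass, each $g_f$ is a genuine polynomial in $t$. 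Expanding each factor $(z(y)+t\,v(y))^{f(y)}$ by the binomial theorem shows that $g_f(t) = \sum_k c_k^{(f)} t^k$ has nonnegative coefficients $c_k^{(f)} \ge 0$.

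Two elementary observations drive the argument. First, $g_f \not\equiv 0$ precisely when no factor vanishes identically, that is, when $\mathrm{supp}(f) \subseteq \mathrm{supp}(z) \cup \mathrm{supp}(v)$. Second, when $g_f \not\equiv 0$ its degree in $t$ equals $\sum_{y \in \mathrm{supp}(v)} f(y)$, since only the factors with $v(y) \ne 0$ contribute positive powers of $t$. Next I would collect coefficients: by nonnegativity of every $\mu_x(f)\,c_k^{(f)}$, Tonelli's theorem permits interchanging the two sums, giving $\psi(t) = \sum_{k \ge 0} C_k t^k$ with $C_k = \sum_f \mu_x(f)\,c_k^{(f)} \ge 0$, the series converging on $[0,\varepsilon]$ because $\psi$ takes values in $[0,1]$ there.

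Each $g_f$, having nonnegative coefficients, is convex on $[0,\infty)$, hence so is $\psi$. For the strict statement, I would differentiate termwise on $(0,\varepsilon)$ to get $\psi''(t) = \sum_{k \ge 2} k(k-1)\,C_k\, t^{k-2}$, which is $>0$ for every such $t$ if and only if $C_k > 0$ for some $k \ge 2$; a function with positive second derivative on the interior is strictly convex on the interval. Thus $\psi$ is strictly convex exactly when $C_k > 0$ for some $k \ge 2$. Finally I would match this with \eqref{eq:supp}: since all summands are nonnegative, $C_k>0$ for some $k\ge 2$ iff there exists $f$ with $\mu_x(f)>0$ and $c_k^{(f)}>0$ for some $k\ge 2$. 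For a fixed $f$ the latter means $g_f$ carries a term of degree at least two, which by the two observations above holds exactly when $g_f \not\equiv 0$ (equivalently $\mathrm{supp}(f) \subseteq \mathrm{supp}(z) \cup \mathrm{supp}(v)$) and $\deg g_f = \sum_{y \in \mathrm{supp}(v)} f(y) \ge 2$ — which is precisely \eqref{eq:supp}.

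The main delicate point I expect is this last bookkeeping step: establishing that ``$g_f$ has a positive coefficient in some degree $\ge 2$'' is equivalent to the simultaneous support and degree constraints in \eqref{eq:supp}, and in particular that a nonvanishing $g_f$ of degree $d_f \ge 2$ automatically has a positive coefficient at degree $d_f$. The remaining ingredients — convexity of a power series with nonnegative coefficients, the interchange of summation, and termwise differentiation inside the interval of convergence — are routine.
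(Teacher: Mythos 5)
Your proposal is correct and follows essentially the same route as the paper: expand $G(z+tv\,|\,x)$ as a power series in $t$ with nonnegative coefficients and identify strict convexity with the positivity of some coefficient of $t^i$ with $i\ge 2$, which is then matched to the support and degree conditions in \eqref{eq:supp}. Your factor-by-factor bookkeeping (nonvanishing of $g_f$ plus $\deg g_f=\sum_{y\in\mathrm{supp}(v)}f(y)\ge 2$, with positive leading coefficient) is just a slightly different organization of the paper's chain of equivalences over pairs $g\le f$ with $\mathcal H(g)\ge 2$.
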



\begin{proof}[Proof of Lemma~\ref{lem:convexity}]
Let us evaluate the function $G$ on the line $t \mapsto z+tv$
where $t \in [0, T)$ and $T:=\sup\{s>0:z+sv \in [0,1]^X\}$.

\[ 
\begin{split}
G(z+tv|x)&=
\sum_{f \in S_X} \mu_x(f) \prod_{y \in X} \sum_{i=0}^{f(y)} \binom{f(y)}{i} z(y)^{f(y)-i}v(y)^i t^i\\
&=
\sum_{f \in S_X} \mu_x(f)  \sum_{g \in S_X:g \le f} \prod_{y \in X}\binom{f(y)}{g(y)} z(y)^{f(y)-g(y)}v(y)^{g(y)} t^{g(y)}\\
&=
\sum_{f \in S_X} \mu_x(f)  \sum_{g \in S_X:g \le f} t^{\mathcal H(g)} \prod_{y \in X}\binom{f(y)}{g(y)} z(y)^{f(y)-g(y)}v(y)^{g(y)}  \\
&=
\sum_{f \in S_X} \mu_x(f)  \sum_{i=0}^{\infty} \sum_{g \in S_X:\mathcal H (g)=i, g \le f}
t^{i} \prod_{y \in X}\binom{f(y)}{g(y)} z(y)^{f(y)-g(y)}v(y)^{g(y)}  \\
&=
\sum_{i=0}^\infty t^{i} \left ( \sum_{f,g \in S_X:\mathcal H (g)=i, g \le f}
\mu_x(f)  \prod_{y \in X}\binom{f(y)}{g(y)} z(y)^{f(y)-g(y)}v(y)^{g(y)} \right ) \\
\end{split}
\] 
The strict convexity of a power series in $t$ with nonnegative coefficients is equivalent to the strict positivity
of at least one coefficient corresponding to $t^i$ with $i \ge 2$.
Hence it is easy to show that each of the following assertions is equivalent to the next one and
that they are all equivalent to the strict
convexity of $t \mapsto G(z+vt|x)$
\begin{enumerate}
 \item $\exists f,g : \mathcal H(g) \ge 2, \, f \ge g, \, \mu_x(f)>0 : \mathrm{supp}(v) \supseteq \mathrm{supp}(g), \,
\mathrm{supp}(z) \supseteq \mathrm{supp}(f-g)$;
 \item $ \exists f,g : \mathcal H(g) \ge 2, \, f \ge g, \, \mu_x(f)>0 : g=f \ident_{\mathrm{supp}(v)}, \,
\mathrm{supp}(z) \supseteq \mathrm{supp}(f) \setminus \mathrm{supp}(v)$;
 \item $\exists f : \mu_x(f)>0 : \sum_{y \in \mathrm{supp}(v)} f(y) \ge 2, \,
\mathrm{supp}(z) \supseteq \mathrm{supp}(f) \setminus \mathrm{supp}(v)$;
 \item $\exists f : \mu_x(f)>0 : \sum_{y \in \mathrm{supp}(v)} f(y) \ge 2, \,
\mathrm{supp}(z) \cup \mathrm{supp}(v)\supseteq \mathrm{supp}(f)$;
 \item $ \exists f : \mu_x(f)>0 : \sum_{y \in \mathrm{supp}(v)} f(y) \ge 2, \,
\mathrm{supp}(z+v)\supseteq \mathrm{supp}(f)$.
\end{enumerate}
\end{proof}

\begin{lem}\label{lem:generationn}
 Let $(X, \mu)$ be a BRW and fix $x_0 \in X$. Suppose that for some
$\bar x$ in the same irreducible class of $x_0$ and $f \in S_X$ we have that $\mu_{\bar x}(f)>0$,
$\sum_{w:w \rightleftharpoons x_0} f(w) \ge 2$.
We can fix $\bar n \in\N$ such that
if the process starts with one particle at $x_0 \in X$ then we have at least 2 particles at
$x_0$ in the generation $\bar n$ wpp.
\end{lem}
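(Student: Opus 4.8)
The plan is to exhibit an explicit genealogical configuration of positive probability that places two particles at $x_0$ at a single generation $\bar n$, using the independence of the reproductions of distinct individuals built into the construction of Section~\ref{subsec:discrete}. First I would route one lineage upward from $x_0$ to $\bar x$: since $x_0 \rightleftharpoons \bar x$, there is a path $x_0=y_0,y_1,\ldots,y_k=\bar x$ in $(X,E_\mu)$, and because each $m_{y_iy_{i+1}}>0$ there is $f_i$ with $\mu_{y_i}(f_i)>0$ and $f_i(y_{i+1})>0$; following one designated child at each step gives, with probability at least $\prod_i \mu_{y_i}(f_i)>0$, a descendant at $\bar x$ at generation $k$. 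At that point I force this particle to reproduce according to $f$ (probability $\mu_{\bar x}(f)>0$), which by hypothesis places at least two children, say two distinct individuals located at vertices $w_1$ and $w_2$ (possibly $w_1=w_2$) inside the class of $x_0$, at generation $k+1$.

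Next I would route each of these two children back down to $x_0$. Each $w_j \rightleftharpoons x_0$, so a return path exists and, by the same argument, a descendant reaches $x_0$ with positive probability in a number of steps equal to the length of that path. The main obstacle is \emph{synchronization}: both lineages must arrive at $x_0$ at the same generation $\bar n$, so I need a single length $L$ that is simultaneously a return-path length from $w_1$ and from $w_2$. The key observation that resolves this is a period argument. Since both $w_1$ and $w_2$ are reachable from $x_0$ in exactly $k+1$ steps (via $y_0,\ldots,y_k=\bar x$ and then the edge supplied by $f$), concatenating a length-$(k+1)$ path $x_0\to w_j$ with any return path $w_j\to x_0$ of length $q$ produces a cycle at $x_0$ of length $(k+1)+q$. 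As every cycle length at $x_0$ is a multiple of the period $d:=d(x_0)$, this forces $q \equiv -(k+1)\pmod{d}$ for \emph{both} $j=1,2$, so all return lengths from $w_1$ and from $w_2$ lie in the same residue class modulo $d$.

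To produce a common $L$ I would fix return paths of lengths $a_1,a_2$ (with $a_1\equiv a_2 \pmod d$) and append cycles at $x_0$: the set of cycle lengths at $x_0$ is an additive sub-semigroup of $\N$ with gcd $d$, hence contains every sufficiently large multiple of $d$. Therefore $a_j+md$ is a return length from $w_j$ for all large $m$, and since $a_1\equiv a_2\pmod d$ I can choose $L$ lying in both sets of return lengths. Setting $\bar n:=k+1+L$, the event that the first lineage reaches $\bar x$ and reproduces via $f$, and that each of the two resulting children \emph{independently} sends a descendant to $x_0$ along a length-$L$ path, has probability bounded below by a finite product of strictly positive factors; on this event there are at least two particles at $x_0$ at generation $\bar n$.

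Finally I would note the degenerate case separately: if the class of $x_0$ is the single vertex $\{x_0\}$, then $\bar x=x_0$ and $f(x_0)\ge 2$, so $\bar n=1$ works directly and the period argument is invoked only when the class is larger and genuine cycles are available. The expected difficulty is entirely in the synchronization step; once the phase identity $q\equiv -(k+1)\pmod d$ is established for both children, the remaining estimates are routine positive-probability bookkeeping relying on the independence of distinct particles.
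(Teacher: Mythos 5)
Your proof is correct, and its skeleton is the same as the paper's: send one lineage from $x_0$ to $\bar x$ along a path in $(X,E_\mu)$, force the reproduction $f$ there to spawn two distinct offspring lineages inside the class of $x_0$, and steer both back to $x_0$ at a common generation, bounding the probability below by a finite product of positive factors. Where you genuinely diverge is in the synchronization step, which is the only delicate point of the lemma. The paper splits into two cases: if $f$ places two children at a single vertex $x_{m+1}\rightleftharpoons x_0$, both lineages follow one and the same closed path back and synchronization is automatic; if $f$ places children at two distinct vertices $x_{m+1},y_{m+1}$, the paper closes two separate cycles of lengths $n_1,n_2$ at $x_0$ and asserts a common generation with essentially no detail (the value $\bar n:=GCD(n_1,n_2)$ printed there cannot be right as stated --- one needs a common element of the two sets of return times, e.g.\ $n_1+n_2$ or a common multiple obtained by looping each lineage around the other's cycle). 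Your argument replaces this case analysis with a single uniform mechanism: all return lengths from $w_1$ and from $w_2$ lie in the residue class $-(k+1)\bmod d(x_0)$, and since the cycle lengths at $x_0$ form an additive semigroup with gcd $d(x_0)$ containing all large multiples of $d(x_0)$, a common return length $L$ exists. This is more work than the paper's case (a) requires, but it is rigorous precisely where the paper is sketchy, and it subsumes both of the paper's cases (when $w_1=w_2$ your $a_1=a_2$ and the semigroup step is vacuous). One small remark: the nonemptiness of the set of positive-length cycles at $x_0$, which your semigroup argument needs, is guaranteed by the hypothesis itself (the cycle $x_0\to\cdots\to\bar x\to w_1\to\cdots\to x_0$ of length $k+1+a_1$ witnesses it), so your separate treatment of the singleton-class case is harmless but not actually necessary.
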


\begin{proof}[Proof of Lemma~\ref{lem:generationn}]
Consider a path $x_0, x_1, \ldots, x_m=\bar x$ and let $f \in S_X$ be
such that $\mu_{\bar x}(f)>0$ and $\sum_{w:w \rightleftharpoons x_0} f(w) \ge 2$.
We can have two cases.

\smallskip

\noindent \textbf{(a)}.~There exists $x_{m+1} \in X$ such that $x_{m+1} \rightleftharpoons x_0$ and $f(x_{m+1}) \ge 2$; in this case consider the closed path
$x_0, x_{1}, x_{2}, \ldots, x_m, x_{m+1}, \ldots, x_n=x_0$ and take $\bar n:=n$. Since any particle at $x_i$
has at least one child at $x_{i+1}$ wpp and a particle at $\bar x$ has at least 2 children at $x_{m+1}$ wpp,
then any particle at $x_0$ has at least 2 descendants
at $x_0$ in the $\bar n$th generation.
Indeed, denote by $f_i \in S_X$ such that $\mu_{x_i}(f_i)>0$, $f_i(x_{i+1}) \ge 1$ for all $i =0, \ldots \bar n-1$
($f_m$ being $f$), then the probability  that a particle at $x_0$ has at least 2 particle at $x_0$
in the $\bar n$th generation is bounded from below by $\prod_{i=0}^m \mu_i(f_i) \prod_{j=m+1}^{\bar n-1} \mu_j(f_i)^2$.
\smallskip

\noindent \textbf{(b)}.~There exists a couple of different vertices
$x_{m+1}, y_{m+1}$ 
such that $x_{m+1}, y_{m+1}\rightleftharpoons x_0$ and
$f(x_{m+1}), f(y_{m+1}) \ge 1$; in
this case consider the paths $x_0, x_1, \ldots x_m, x_{m+1},
\ldots, x_{n_1}=x_0$ and $x_0, x_1, \ldots x_m, y_{m+1}, \ldots,
y_{n_2}=x_0$ and take $\bar n:= GCD(n_1,n_2)$ (the conclusion is
similar as before).
\end{proof}

\begin{proof}[Proof of Theorem~\ref{th:Fgraph}]
For every $z$ fixed point of $G$, we know that $z \ge \bar q$ and $z \le \mathbf 1_X$; this implies
that if $\sup_{x \in X} z(x)<1$ for some fixed point then necessarily $\sup_{x \in X} \bar q(x)<1$.
Hence, if $\bar q=\mathbf 1$ there is nothing to prove. Otherwise,
we show that if $G(z)=z$ and $z \not = \bar q$ then $\sup_{w \in X} z(w)=1$.
Suppose that  the BRW is locally isomorphic to $(Y,\nu)$ through the map $g$
and define $h(y):=\sup_{w \in g^{-1}(y)} z(w)$.
Clearly $h \in [0,1]^Y$ and $h \circ g \ge z$ which implies that
$G_Y(h) \ge h$.
Indeed
\[
\begin{split}
G_Y(h|y)&= \sup_{x \in g^{-1}(y)}G_Y(h|g(x))=\sup_{x \in g^{-1}(y)}G(h \circ g|x) \\
&\ge
\sup_{x \in g^{-1}(y)}G(z|x)= \sup_{x \in g^{-1}(y)} z(x) = h(y).
\end{split}
\]
If $Y$ finite then we can choose $\widetilde y \in Y$ which minimizes
\[
 t(y):= \frac{1-\bar q^Y(y)}{h(y)-\bar q^Y(y)}
\]
(where $t(y):=+\infty$ if $h(y)=\bar q^Y(y)$);
note that $t(y) \ge 1$ for all $y \in Y$ and $t(\widetilde y)<+\infty$.
By applying the maximum principle (Proposition~\ref{pro:maximumprinciple})
to the function $1/t(y)$ (where $y$ is ranging in the set $\{w:\bar q^Y(w)<1\}$)
we have that it is constant on $\{y:\widetilde y \to y\}$.
Since $\bar q^Y (\widetilde y)<1$ and $Y$ is finite,
then there exists $y_0$ such that $\widetilde y \to y_0$ and
there is local survival at $y_0$ starting from $y_0$.
Since $(Y,\nu)$ satisfies Assumption~\ref{assump:1}
then there exists $\bar y \rightleftharpoons y_0$ such that
a particle living at $\bar y$ wpp has at least 2 children in the
irreducible class of $y_0$.
Then by taking $y_0$ instead of $x_0$ in
Lemma~\ref{lem:generationn} we have that we can
find $\bar n \in \N$ such that the function
\[
\phi(t):=G_Y^{(\bar n)}(\bar q^Y+t (h-\bar q^Y)|y_0)-\bar q^Y(y_0)-t (h(y_0)-\bar q^Y(y_0))
\]
is strictly convex by Lemma~\ref{lem:convexity}.
Indeed $G_Y^{(\bar n)}$ is the generating function of the BRW constructed by considering
the $n$-th generations of the original BRW where $\bar n | n$ and, under our hypotheses, it satisfies
equation~\eqref{eq:supp}. 

Note that $\phi$ is well defined in $[0,t(y_0)]$ since
\[
r_t(y):=\bar q^Y(y)+t (h(y)-\bar q^Y(y)) \le \bar q^Y(y)+t(y_0) (h(y)-\bar q^Y(y))\le 1
\]
hence $r_t \in [0,1]^Y$ for all $t \in [0,t(y_0)]$.

Clearly every fixed point of $G_Y$ is a fixed point of $G_Y^{(\bar n)}$;
in particular, $G^{(\bar n)}(z)=z$ and $G^{(\bar n)}_Y(\bar q^Y)=\bar q^Y$, whence
$\phi(0)=0$ and $\phi(1)=G^{(\bar n)}_Y(h|y_0)-h(y_0)$.
Now, using equation~\eqref{eq:G-FBRWs}, $G^{(\bar n)}_Y(h) \ge h$
and this, in turn, implies $\phi(1) \ge 0$.
Since $\phi$ is strictly convex we have that $\phi(t)>0$ for all $t \in (1, t(y_0)]$.
If $t(y_0)>1$ then $0 < \phi(t(y_0))=G^{(\bar n)}_Y(r_{t(y_0)}|y_0)-1$ but this is a contradiction
since $r_{t(y_0)} \in [0,1]^Y$ and $G^{(\bar n)}_Y(r_{t(y_0)}) \in [0,1]^Y$.
In the end $t(y_0)=1$, thus $1=h(y_0)=\sup_{w \in X} z(w)$.
\end{proof}
Note that, from the previous proof, 
if the BRW on $Y$ is irreducible then by the maximum principle 
we have
that $(h-\bar q^Y)/(\mathbf{1}-\bar q^Y)$ is a constant function, thus 
$h(y)=\sup_{w \in g^{-1}(y)} z(w)=1$ for all $y \in Y$.

\begin{proof}[Proof of Proposition~\ref{pro:qtransitive}]
Since  $(X, E_\mu)$ is irreducible we have that $q(x,y)=q(x,x)$ for all $x,y \in X$ and
if $\bar q<\mathbf 1$ (resp.~$q(\cdot,y)<\mathbf 1$)
then $\bar q(x) <1$ (resp.~$q(x,y)<1$) for all $x \in X$.
Moreover, quasi transitivity implies that if $q(\cdot,y)<\mathbf 1$ then
$\sup_{x \in X} q(x,y) <1$.
Thus, according to Theorem~\ref{th:Fgraph}, $q(\cdot,y) \not = \mathbf 1$
implies $q(\cdot,y)=\bar q$.
\end{proof}

\begin{proof}[Proof of Theorem~\ref{th:MenshikovVolkov}]
According to Section~\ref{subsec:nodeath}, there is
strong local survival at $y$ starting from $x$ for the BRW
$\{\eta_n\}_{n \in \N}$
if and only if there is a.s.~local survival at $y$ for the associated BRW with no death,
that is, $\{\widehat \eta_n\}_{n \in \N}$
conditioned on $\mathcal{A}_x$ (global survival starting from $x$).
Moreover $v$ satisfies equation~\eqref{eq:MenshikovVolkov}
if and only if $v_1:=T_{\bar q}^{-1}v$ satisfies
\[
 \begin{cases}
  \widehat G(v_1|x) \ge v_1(x), & \forall x \in A^\complement,\\
  v_1(x_0) > \max_{x \in A} v_1(x) & \textrm{for some } x_0 \in A^\complement,
 \end{cases}
\] 
which is equation~\eqref{eq:MenshikovVolkov} in the case of the associated BRW with no death.
Hence it is enough to prove the result for the case $\rho_x(0)=0$ for all $x \in X$.

For completeness, we sketch the proof of \cite[Theorem 3.1]{cf:MenshikovVolkov}.
Suppose that there exist a function $v$ and a set $A$ as in the statement of
the theorem.
Recall the definition of $G$ given by equation~\eqref{eq:genfun}, define
$\widetilde Q_n:=\prod_{x \in X} v(x)^{\eta_n(x)}$ and $\sigma:=\min\{n \in \N: \sum_{x \in A} \eta_n(x)>0\}$,
where $\{\eta_n\}_{n \in \N}$ is a
realization of the BRW. As usual $\min \emptyset :=+\infty$. Let $Q_n:=\widetilde Q_{n \wedge \sigma}$.
If $\bar v(x):=\mathbb E [ \widetilde Q_{n+1} | \eta_n=\delta_x]$ then it is easy to show that
$\bar v
= G(v)$. 
Using the same arguments as in  \cite[Theorem 3.1]{cf:MenshikovVolkov}
we can show that $\{Q_n\}_{n\ \in \N}$ is a nonnegative supermartingale, hence there exists
$Q_\infty:=\lim_{n \to \infty} Q_n$ in $L^1$ and almost surely. Clearly $\mathbb E[Q_\infty] \ge \mathbb E[Q_0]$.
If $\eta_0:=\delta_{x_0}$ where $x_0 \not \in A$ satisfies the hypotheses of Theorem~\ref{th:MenshikovVolkov}
and if there were strong local survival then at least one particle would hit $A$ a.s., thus
$Q_\infty \le \max_{x \in A} v(x) < v(x_0)$ which is a contradiction. This yields the first part
of the proof.

Assume now that there is no strong local survival. Fix $\bar x \in X$ and $A:=\{\bar x\}$. Define
$v(x):=q_0(x,A)$, the probability of never hitting $A$ starting from $x$. Since the BRW is irreducible,
then there is no strong local survival if and only if $v(x)>0$ for some $x$. Clearly $v(\bar x)=0<v(x_0)$
for some $x_0 \not \in A$ and
\[
 v(x)= \sum_{g \in S_X: g(\bar x)=0} \mu_x(g) \prod_{y \in X} v(y)^{g(y)} \le G(v|x),
\quad \forall x \not = \bar x
\]
and the theorem is proved.
\end{proof}

\begin{proof}[Proof of Theorem~\ref{th:nonam}]
 The proof is essentially the same as in \cite[Section 3.3]{cf:BZ}. We just sketch
the main steps. 
Let $Y$ be the finite set onto which $X$ can be mapped by definition of $\mathcal F$-BRW.
Instead of the operators $N$ and $\widetilde N$ we use
$Mf(x):=\sum_{w \in X} m_{xw}f(w)$ (for all $x \in X$)
and  $\widetilde Mf(y):=\sum_{w \in Y} \widetilde m_{yw}f(w)$ (for all $y \in Y$)
where $m_{xw}=m^X_{xw}$ and $\widetilde m_{yw}=m^Y_{yw}$. These are well-defined,
bounded, linear operators from $l^2(X)$ and $l^2(Y)$ into itself respectively.
One can prove that $\|M\|=\rho(M)=M_s$ where $\rho(M)$ is the spectral radius of the operator
(this can be done as in \cite[Lemma 3.3]{cf:BZ} and \cite[Lemma 2.2]{cf:Stacey03}).
Similarly $\|\widetilde M\|=\rho(\widetilde M)=\widetilde M_s=\widetilde M_w=M_w$
(here we use the finiteness of $Y$).

Moreover, it can be shown that
for nonamenable BRWs
there exists $c>0$ such that, for all $f\in l^2(X)$,
\[
\| f\|_{D(2)}\ge c\| f\|_2,
\]
where the Dirichlet norm is defined as
\[
\| f\|_{D(2)} = \left( \sum_{x,y\in X} m_{xy}|f(x)-f(y)|^2
\right)^{1/2}.
\]
The proof of this inequality is analogous to the one of
\cite[Theorem 2.6]{cf:Stacey03}, the only difference being
the presence of $m_{xy}$ which can be easily dealt with.

The rest of the proof is the same as \cite[Theorem 3.6]{cf:BZ} using the term
$\sum_{x \in S, y \in S^\complement} m_{xy}$ instead of $|\partial_E S|$ and using the
graph $G^2$ induced by
$M^2$. 
\end{proof}

\begin{proof}[Proof of Theorem~\ref{th:subexponentialgrowth}]
Note that $\kappa(x)k^{(n)}_{xy}=\kappa(y)k^{(n)}_{yx}$ for all $x,y \in X$,
$n \in \N$.
Moreover, by the Cauchy-Schwartz inequality, for all $n \in \N$,
\[
K_s(x_0,x_0)^{2n}\ge k^{(2n)}_{x_0x_0}=\sum_{y \in X}k^{(n)}_{x_0y}k^{(n)}_{yx_0}=
\sum_{y \in B(x_0,n)} (k^{(n)}_{x_0y})^2\frac{\kappa(x_0)}{\kappa(y)}\ge
\frac{\left(\sum_yk^{(n)}_{x_0y}\right)^2}{c_n|B(x_0,n)|}.
\]
Hence
\[
K_w(x_0)=\liminf_n\sqrt[n]{\sum_yk^{(n)}_{x_0y}}=
\liminf_n\sqrt[2n]{\frac{\left(\sum_yk^{(n)}_{x_0y}\right)^2}{c_n|B(x_0,n)|}} \le K_s(x_0,x_0).
\]
\end{proof}

\begin{proof}[Proof of Proposition~\ref{pro:F-BRW1}]
\textit{(1)} and \textit{(2)} follow easily from Theorems~\ref{th:equiv1local}(1) and \ref{th:equiv1global3}(2).
As for \textit{(3)},
 we note that $m^{(n)}(x,y)= \bar \rho^n p^{(n)}(x,y)$ and that the generating
function $\Phi$ defined in Section~\ref{subsec:genfun} satisfies $\Phi(x,y|t)=F(x,y|t \bar \rho)$.
Thus, $\Phi(x,x|1)>1$, which is equivalent to local survival at $x$, is equivalent to
$F(x,x|\bar \rho) >1$.
\end{proof}

\begin{proof}[Proof of Lemma~\ref{lem:test0}]
Clearly $\prod_{i \in \N} (1-\alpha_i)^{k_i}>0$ if and only if $\sum_{i \in \N} k_i \log(1-\alpha_i)>-\infty$.
Observe that $\log(1-x) \le -x$ for all $x < 1$ hence
\[ 
  \sum_{i \in \N} k_i \alpha_i \le -\sum_{i \in \N} k_i \log(1-\alpha_i)<\infty.
\] 

If $\alpha_i \in [0, 1)$ and $k_i \ge 1$ eventually as $i \to \infty$ then
there is no loss of generality by assuming that $\alpha_i \in [0, 1)$ and $k_i \ge 1$ for all $i$.
In this case, since $k_i \ge 1$ both sides imply $\alpha_i \to 0$. Thus $\log(1-\alpha_i) \sim -\alpha_i$ and
\[
 \sum_{i \in \N} k_i \log(1-\alpha_i) > -\infty \Longleftrightarrow \sum_{i \in \N} k_i \alpha_i < \infty.
\]
\end{proof}

\begin{proof}[Proof of Theorem~\ref{th:recurrent}]
Since $\mu$ satisfies equation~\eqref{eq:particular1} 
then
$G(z| x) = \sum_{n=0}^\infty \rho_x(n) (Pz(x))^n$ where
$Pz(x)=\sum_{y \in X} p(x,y)z(y)$.
On the other hand, $\rho_x=\rho$ for all $x \in X$,
thus  $\bar q(x)=\bar q$ for all $x \in X$ where $\bar q$ is
the smallest fixed point in
$[0,1]$ of $t \mapsto G(t \mathbf{1}| x) \equiv \sum_{n=0}^\infty \rho(n) t^n=:F(t)$.
Clearly, any fixed point $z$ of $G$ must satisfy the inequality $z(x) \ge \bar q(x)=\bar q$.
Since $F(t) < t$ for all $t \in (\bar q,1)$ then
\[
z(x)=G(z|x)= F(Pz(x)) \le Pz(x),
\]
hence $z$ is a bounded subharmonic function. It is well known that the existence
of non-constant subharmonic functions which are bounded from above is equivalent to transience,
thus, in the recurrent case we have necessarily $z=t \mathbf 1$ which implies that $t=F(t)$
and $t \in \{\bar q, 1\}$.
Suppose that $\bar q<1$, since the random walk is recurrent,
then $q_0(\cdot, A) \le \bar q \mathbf{1}$ (for all $A \subseteq X$),
hence by Remark~\ref{rem:strongconditioned} $q(\cdot, A) = \bar q \mathbf{1}$
which is equivalent to strong local survival in $A$.
\end{proof}

\end{document}